%% file: main.tex
\documentclass[10pt]{article} %size option 10 11 12, default 10
\usepackage[letterpaper]{geometry}
\input{macro}
\date{}
\hypersetup{allcolors = blue}
\pagecolor{white}
\color{black}

\title{Branched Covers of Hyperbolic Groups}
\author{Darius Alizadeh \\
\small{dariusdalizadeh@gmail.com}}

\begin{document}
\maketitle

\begin{abstract} 
Given a hyperbolic group $G$ and a quasiconvex subgroup $Q$, we define a \emph{branched cover of $G$ along $g$}, which is a hyperbolic group $H$ with a certain map into $G$. This builds on recent work on drilling hyperbolic groups and generalizes the case where $G$ is the fundamental group of a closed hyperbolic $3$--manifold $M$, $Q \cong \ZZ$ is represented by an embedded geodesic loop $\gamma$, and $H$ is the fundamental group of a branched cover of $M$ with branching locus $\gamma$. We show that certain deepness assumptions on Dehn fillings induce branched covers, providing many examples of such branched covers. Some additional assumptions imply these branched covers have boundary $S^2$, which may hold interest for the Cannon Conjecture. 

\end{abstract}

\input{Sections/1_Introduction}

\input{Sections/2_Background}

\input{Sections/3_BuildingBranchedCovers}

\bibliographystyle{alpha}
\bibliography{main.bbl}

\end{document}

%% file: macro.tex
\usepackage[english]{babel}
\usepackage{setspace}
\usepackage{amsmath}
\usepackage{amsthm}
\usepackage{amssymb}
\usepackage{amsfonts}
\usepackage{mathrsfs}
\usepackage{thmtools}
\usepackage{thm-restate}
\usepackage{graphicx}
\usepackage[colorlinks=true, allcolors=Nord8]{hyperref}
\usepackage{tikz-cd} 
\usepackage{quiver}
\usepackage{enumitem}
\usepackage{xcolor}
\usepackage{etoolbox}
\usepackage[colorinlistoftodos]{todonotes}
\usepackage{float}
\usepackage{theoremref}
\definecolor{Nord0}{HTML}{2e3440}

\definecolor{Nord4}{HTML}{d8dee9}

\definecolor{Nord7}{HTML}{8fbcbb}%light green
\definecolor{Nord8}{HTML}{88c0d0}%aqua
\definecolor{Nord9}{HTML}{81a1c1}%lighter blue
\definecolor{Nord10}{HTML}{5e81ac}%dark blue
\definecolor{Nord11}{HTML}{bf616a}%red
\definecolor{Nord12}{HTML}{d08770}%orange
\definecolor{Nord13}{HTML}{ebcb8b}%yellow
\definecolor{Nord14}{HTML}{a3be8c} %green
\definecolor{Nord15}{HTML}{b48ead} %pink

\pagecolor{Nord0} %Set the background color
\color{Nord4} %Set the defaut word color.

\theoremstyle{definition}
\newtheorem{theorem}{Theorem}[section]
\newtheorem{proposition}[theorem]{Proposition}
\newtheorem{corollary}[theorem]{Corollary}
\newtheorem{lemma}[theorem]{Lemma}

\newtheorem{definition}[theorem]{Definition}

\newtheorem{conjecture}[theorem]{Conjecture}

\newtheorem{notation}[theorem]{Notation}
\newtheorem{assumption}[theorem]{Assumption}

\newtheorem{remark}[theorem]{Remark}
\newtheorem{claim}{Claim}[theorem]
\setuptodonotes{inline, color=Nord13} %Problems that I can edit right now
\newcommand{\sys}[0]{\textrm{sys}}

\newcommand{\sS}[0]{\mathscr{S}}

\newcommand{\ZZ}[0]{\mathbb{Z}}

\newcommand{\llangle}{\langle\negthinspace\langle}
\newcommand{\rrangle}{\rangle\negthinspace\rangle}

\newcommand{\sH}[0]{\mathscr{H}}

\usepackage{tikz}
\usetikzlibrary{math}
\usepgfmodule{oo}
\usepackage{scalerel}
\usepackage{stackengine,wasysym}
\usepackage{mathtools}

\DeclareFontFamily{U}{mathx}{\hyphenchar\font45}
\DeclareFontShape{U}{mathx}{m}{n}{
      <5> <6> <7> <8> <9> <10>
      <10.95> <12> <14.4> <17.28> <20.74> <24.88>
      mathx10
      }{}
\DeclareSymbolFont{mathx}{U}{mathx}{m}{n}
\DeclareFontSubstitution{U}{mathx}{m}{n}
\DeclareMathAccent{\widecheck}{0}{mathx}{"71}

\newcommand{\leftQ}[2]{\left.\raisebox{-.2em}{$#2$}\middle\backslash\raisebox{.2em}{$#1$}\right.}

\usepackage{thm-restate}

%% file: Sections/1_Introduction.tex
\section{Introduction}

A fundamental strategy for studying Gromov hyperbolic groups is to develop analogs of behavior seen in hyperbolic manifolds. Examples include the boundary at infinity, Dehn fillings, exponential growth, quasi-geodesic stability, the geodesic flow, and much more. This paper is about applying this strategy to branched covers of hyperbolic $3$--manifolds with branching locus an embedded geodesic loop. We recall the general definition of a branched cover.

\begin{definition}
    Suppose $b:Y \longrightarrow X$ is some continuous function and $L\subset X$. Then $b$ is a \emph{branched cover of $X$ with branching locus $L$} if $b$ is a covering map when restricted to $Y \setminus b^{-1}(L)$.  The \emph{degree} of $b$ is the degree of the restriction of $b$. 
\end{definition}

The most basic examples are maps $p_n:\mathbb{C} \longrightarrow \mathbb{C}$ given by $z\longrightarrow z^n$ with $L = \{0\}$. This leads to the well studied context where $X,Y$ are Riemann surfaces, $L$ is a finite collection of points, and for each $x \in L$, the branched cover is locally modeled on the restriction of $p_n$ to the unit disc for some $n$, with $x$ playing the role of the origin. Similarly, we can construct branched covers of the solid torus $\pi_n:D^2\times S^1 \longrightarrow D^2\times S^1$ given by $\pi_n(z,\theta) = (z^n,\theta)$ with $L$ as the core of the solid torus. Variations on this theme, like using a cover of the circle by itself in the second coordinate, or by cutting the solid torus along a meridian and gluing it back together with a rotation, give even more examples. In fact, every finite cover of the torus by itself can be extended to a branched cover of solid tori over the core using these variations. 

The focus of this paper is generalizing the case where $Y,X$ are hyperbolic $3$--manifolds, $b$ has finite degree, and $L=\gamma$ is an embedded geodesic loop in $X$. In \cite{Drilling}, the authors develop the following group theoretic analog of drilling such an $X$ along $\gamma$:

\begin{definition}(Drilling)\thlabel{defn drilling}
    Suppose $G$ is a hyperbolic group and $g\in G$ generates an infinite maximal cyclic subgroup. A \emph{drilling of $G$ along $g$} is a relatively hyperbolic pair $(\widehat{G},P)$ along with a subgroup $N\triangleleft P$ and an identification $P/N \cong \langle g \rangle$ so that $\widehat{G}/\llangle N \rrangle \cong G$ and the quotient identifies $P/N$ with $\langle g\rangle$.
\end{definition}

Here the maximal cyclic subgroup plays the role of $\gamma$, as we explain below. The arguments in this paper work in slightly more generality, leading us to the following definition. Recall that if $(\widehat{G},P)$ is a relatively hyperbolic pair and $N \triangleleft P$, then the \emph{Dehn filling of $\widehat{G}$ by $N$} is the quotient $\widehat{G}/\llangle N \rrangle$. Any finite index subgroup $\widehat{H}$ of such a $\widehat{G}$ inherits a relatively hyperbolic structure and an induced Dehn filling from $P$ and $N$, see \thref{induced filling} for details.

\begin{definition}\thlabel{defn generalized}
    Let $G$ be a hyperbolic group with $Q$ a quasiconvex subgroup. A \emph{generalized drilling of $G$ along $Q$} is a relatively hyperbolic pair $(\widehat{G},P)$ along with a subgroup $N \triangleleft P$ and an identification of $P/N \cong Q$ so that $\widehat{G}/\llangle N \rrangle \cong G$ and the quotient identifies $P/N$ with $Q$. A \emph{generalized branched cover of $Q$} is a finite index subgroup $\widehat{H}$ of $\widehat{G}$ so that the induced filling of $\widehat{H}$ by $N$ is hyperbolic. If $Q$ is a maximal cyclic subgroup of $G$, then we simply call this induced filling a \emph{branched cover}.
\end{definition}

Our main theorem is that, given a generalized drilling, we can produce many generalized branched covers. More explicitly, we show that deep enough normal subgroups always produce branched covers from any induced filling and that deep enough fillings induce branched covers for any finite index subgroup. Recall that a $PD(3)$ pair is a group pair whose cohomology satisfies a certain duality relationship, and this duality has implications for the boundary of a (relatively) hyperbolic group, see Section \ref{PDn stuff} for details. 

\begin{restatable}{theorem}{MainTheorem}(Main Theorem)\label{Main Theorem}
    Suppose $(\widehat{G},P,N)$ is a generalized drilling of $G$ along $Q$. Let $\widehat{H}$ be a finite index subgroup of $\widehat{G}$ with $H$ the induced filling of $\widehat{H}$ by $N$. There exists a finite set $F\subset \widehat{G}$ satisfying the following.
    \begin{enumerate}
        \item If $\widehat{H}$ is normal in $\widehat{G}$ and $F \cap \widehat{H} = \varnothing$, then $H$ is a branched cover of $G$.
        \item If $ F \cap N = \varnothing$, then $H$ is a branched cover of $G$.
    \end{enumerate}
    In each case, if $(\widehat{G},P)$ is a $PD(3)$ pair and $N \cong Q \cong \ZZ$, then $H$ is a $PD(3)$ group and $\partial H \cong S^2$. 
\end{restatable}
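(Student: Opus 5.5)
By the definition of generalized branched cover, the conclusion ``$H$ is a branched cover of $G$'' means exactly this: the finite index subgroup $\widehat{H}\le\widehat{G}$ has hyperbolic induced filling $H$. When $Q$ is maximal cyclic this is a branched cover proper; otherwise it is a generalized one. The map to $G$ is automatic. The inclusion $\widehat{H}\hookrightarrow\widehat{G}$ composed with $\widehat{G}\to\widehat{G}/\llangle N\rrangle\cong G$ kills the kernel of the induced filling, since that kernel is generated by conjugates of subgroups of $N$. It therefore descends to a homomorphism $H\to G$, with image of finite index. So in both cases the plan is to produce $F$ such that avoiding it forces the induced filling to be hyperbolic, and then to add the $PD(3)$ statement.

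\emph{Step 1: set up the induced filling.} By \thref{induced filling}, $\widehat{H}$ is hyperbolic relative to finitely many subgroups $P_i=\widehat{H}\cap P^{g_i}$, one for each double coset $\widehat{H}g_iP$. Its induced filling kernels are $N_i=\widehat{H}\cap N^{g_i}$. Each $P_i/N_i$ injects into $P^{g_i}/N^{g_i}\cong Q$ with finite index, because $[P^{g_i}:P_i]<\infty$. Since $Q$ is quasiconvex in a hyperbolic group, it is hyperbolic, and so each $P_i/N_i$ is hyperbolic.

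\emph{Step 2: Dehn filling theorem.} I would apply the relatively hyperbolic Dehn filling theorem (Osin / Groves--Manning) to the finitely many pairs $(\widehat{H},\{P_i\})$ that can arise. It provides a finite set $F_0\subset\bigcup_i P_i\setminus\{1\}$ with the following property: if each $N_i$ avoids $F_0$, then $\widehat{H}/\llangle N_i\rrangle$ is hyperbolic relative to the images $P_i/N_i$. Since these peripherals are hyperbolic by Step~1, the filling is itself hyperbolic.

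The main obstacle is that $F$ must be chosen from $\widehat{G}$ alone, uniformly over all $\widehat{H}$. To handle this, I would pull everything back to $(\widehat{G},P)$: take the Dehn filling finite set $F_P\subset P\setminus\{1\}$ for $\widehat{G}$ itself, and use it to control all induced peripheral structures. The two cases then go as follows.

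In case~(2), $F\cap N=\varnothing$ means $N$ is deep in $P$. Each $N_i$ is a subgroup of a conjugate of $N$, so after conjugating back to $P$ it also avoids $F_P$. Deepness passes to subgroups, and the hyperbolic Dehn filling argument for finite index subgroups (the version in the drilling paper) applies.

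In case~(1), $\widehat{H}\lhd\widehat{G}$ and $\widehat{H}\cap F=\varnothing$. Then every $N_i\subseteq\widehat{H}$ avoids $F\cap P$. Normality makes all $P_i$ conjugate to $\widehat{H}\cap P$, so a single finite set in $P$ suffices.

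\emph{Step 3: $PD(3)$ and boundary.} If $(\widehat{G},P)$ is a $PD(3)$ pair and $N\cong Q\cong\ZZ$, then $(\widehat{H},\{P_i\})$ is again a $PD(3)$ pair, since this property passes to finite index subgroups. Each $N_i$ is infinite cyclic with cyclic quotient $P_i/N_i$. I would then invoke the results of Section~\ref{PDn stuff}: a filling of a $PD(3)$ pair along $\ZZ$ with $\ZZ$ quotients gives a $PD(3)$ group. Since $H$ is hyperbolic, Bestvina--Mess then gives $\partial H\cong S^2$.
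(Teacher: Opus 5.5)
You correctly identify the crux: $F$ may depend only on $(\widehat{G},P)$ but must work for every finite index $\widehat{H}$. There are infinitely many such $\widehat{H}$, so ``the finitely many pairs $(\widehat{H},\{P_i\})$ that can arise'' is not available. The proposal never actually resolves this.

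\begin{itemize}
\item The Dehn filling theorem for $(\widehat{H},\{P_i\}_i)$ produces a finite set $F_{\widehat{H}}\subset\bigcup_i P_i$ that depends on $\widehat{H}$. Knowing that $N_i\subseteq N^{g_i}$ avoids a conjugate of the set $F_P$ for $\widehat{G}$ (``deepness passes to subgroups'') says nothing about whether $N_i$ avoids $F_{\widehat{H}}$.
\item Information about $\widehat{G}/\llangle N\rrangle_{\widehat{G}}\cong G$ does not help either; $G$ is hyperbolic by hypothesis anyway. The group $K_{\widehat{H}}$ is in general strictly smaller than $\widehat{H}\cap\llangle N\rrangle_{\widehat{G}}$. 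The difference is the kernel of $H\to G$, which is exactly what branching creates.
\item Your sentences ``the hyperbolic Dehn filling argument for finite index subgroups applies'' and ``a single finite set in $P$ suffices'' therefore assert the needed uniformity rather than prove it.
\item Step~3 has the same defect and also omits a hypothesis. The $PD(n)$ filling theorem requires the induced triple for $\widehat{H}$ to be Cohen--Lyndon. The Dahmani--Guirardel--Osin result gives this only for fillings that are long relative to $\widehat{H}$.
\end{itemize}

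The missing idea is to replace the algebraic hypothesis ``avoid a finite set'' by a geometric one that is monotone in the kernel and inherited by subgroups.
\begin{itemize}
\item The paper truncates a cusped space for $(\widehat{G},P)$ at depth $500\theta_1$. This gives a pseudo--cusped space $(\widehat{Y},\widecheck{Y})$, which is also one for $(\widehat{H},\{P_i\}_i)$.
\item It chooses $F$ so that $\widehat{Y}\to\widehat{Y}/\llangle N\rrangle_{\widehat{G}}$ is injective on balls of radius $6\cdot 10^7\theta_1$ centered in $\widecheck{Y}$.
\item Since $K_{\widehat{H}}\le\llangle N\rrangle_{\widehat{G}}$, that quotient map factors through $\widehat{Y}/K_{\widehat{H}}$. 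So $\widehat{Y}\to\widehat{Y}/K_{\widehat{H}}$ is injective on the same balls.
\item A metric criterion, proved via coarse Cartan--Hadamard, then gives relative hyperbolicity of $(H,\{P_i/N_i\}_i)$ for all $\widehat{H}$ at once.
\item In case (1) one instead fills $\widehat{G}$ by $\widehat{H}\cap N$. This subgroup is normal in $P$, avoids $F$, and induces the same filling on $\widehat{H}$.
\item The same truncation gives the very translating inequality $d(x,n\cdot x)\ge 10^4\theta_1$ for $n\in N\setminus\{1\}$ and $x$ off the horoball. This is inherited by $\widehat{H}$ and yields the Cohen--Lyndon property uniformly for the $PD(3)$ part.
\end{itemize}

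Your normality remark in case (1) can be repaired algebraically for the hyperbolicity statement, though not as you phrased it. When $\widehat{H}\lhd\widehat{G}$, one checks that $K_{\widehat{H}}=\llangle \widehat{H}\cap N\rrangle_{\widehat{G}}$, using that the $P_i$ correspond to the double cosets $\widehat{H}g_iP$. So $H$ is a finite index subgroup of the long filling $\widehat{G}/\llangle\widehat{H}\cap N\rrangle_{\widehat{G}}$. That filling is hyperbolic relative to the finite-by-$Q$, hence hyperbolic, group $P/(\widehat{H}\cap N)$. Nothing like this is available in case (2), where $K_{\widehat{H}}$ need not be normal in $\widehat{G}$.
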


The assumption of normality in the first part is not ideal but it is not unreasonable; each finite index subgroup contains a finite index normal subgroup, namely the normal core, and the assumption of normality has interesting geometric implications which we explain below. 

Every relatively hyperbolic group $(\widehat{G},P)$ has an action on a pleasant hyperbolic space, namely the cusped space developed by Groves and Manning in \cite{DehnFillinginRHG}, which is constructed by attaching combinatorial horoballs to cosets of peripheral subgroups in a Cayley graph for $\widehat{G}$. For long enough fillings of such a $\widehat{G}$, the quotient of the cusped space by the filling kernel has large injectivity radius near the Cayley graph. As a technical tool, we generalize to \emph{pseudo--cusped spaces} (see \thref{defn pseudo cusped space}) and prove \thref{main metric lemma try2} which gives metric conditions guaranteeing that the filling of a relatively hyperbolic group is relatively hyperbolic. With the right maneuvering, a cusped space yields a suitable pseudo--cusped space, and combining the conclusion of \thref{main metric lemma try2} with implications from the definition of a generalized drilling gives Theorem \ref{Main Theorem}. The following summarizes the technical requirements that go into the choice of $F$, rephrasing Theorem \ref{Main Theorem} in an effective form. We hope this makes our work easier to take off the shelf. 

\begin{restatable}{theorem}{effectivemain}\label{effective main theorem}
    Suppose $(\widehat{G},P,N)$ is a generalized drilling of $G$ along $Q $ and $(\widehat{Y},\widecheck{Y})$ is a pseudo--cusped space for $(\widehat{G},P)$. Let $\widehat{H}$ be a finite index subgroup of $\widehat{G}$, let $\mathbb{P}_{\widehat{H}}$ be the induced peripheral structure for $\widehat{H}$, let $K \triangleleft \widehat{G}$ and $K_{\widehat{H}} \triangleleft \widehat{H}$ be the corresponding filling kernels, and let $H = \widehat{H}/K_{\widehat{H}}$ be the induced filling of $\widehat{H}$ by $N$. 
    
    \begin{enumerate}
        \item If the action of $(\widehat{H},\mathbb{P}_{\widehat{H}})$ and $K_{\widehat{H}}$ on $(\widehat{Y},\widecheck{Y})$ satisfies the assumptions of \thref{main metric lemma try2}, then $H$ is a branched cover of $G$.
        
        \item If the action of $(\widehat{G},P)$ and $K$ on $(\widehat{Y},\widecheck{Y})$ satisfies the assumptions of \thref{main metric lemma try2}, then $H$ is a branched cover of $G$.

    \end{enumerate}
    Suppose further that $(\widehat{G},P)$ is a $PD(3)$ pair and $N \cong Q\cong  \ZZ$. In either case, if the considered action on $(\widehat{Y},\widecheck{Y})$ also satisfies the very translating condition, then $H$ is a $PD(3)$ group and $\partial H \cong S^2$.
\end{restatable}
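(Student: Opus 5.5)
We prove Theorem \ref{effective main theorem} by reducing both cases to a single application of \thref{main metric lemma try2}. We then identify the resulting filled group $H$ as a finite index subgroup of $G$ in the right way. The key observation is that the filling kernel of the induced filling is controlled by the filling kernel of $\widehat{G}$. Write $K = \llangle N \rrangle_{\widehat{G}}$. Since $\widehat{H}$ has finite index, the induced peripheral structure $\mathbb{P}_{\widehat{H}}$ consists of the intersections $\widehat{H}\cap P^{x}$ over double coset representatives $x$. The induced kernel $K_{\widehat{H}}$ is normally generated in $\widehat{H}$ by the corresponding $\widehat{H}\cap N^{x}$. Hence $K_{\widehat{H}} \le K\cap \widehat{H}$, and $H = \widehat{H}/K_{\widehat{H}}$ maps onto $\widehat{H}K/K$, a finite index subgroup of $\widehat{G}/K \cong G$. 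This gives the map $H\to G$ required of a branched cover. What remains in each case is to show that $H$ is hyperbolic.

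\textbf{Case 1.} Here the hypotheses of \thref{main metric lemma try2} hold for $(\widehat{H},\mathbb{P}_{\widehat{H}})$ acting on $(\widehat{Y},\widecheck{Y})$ with kernel $K_{\widehat{H}}$. The lemma then gives that $H$ is hyperbolic relative to the images of the peripheral subgroups. These images are $(\widehat{H}\cap P^x)/(\widehat{H}\cap N^x)$. Each embeds into $P^x/N^x \cong Q^{x}$, which is a finite index subgroup of the quasiconvex (hence hyperbolic) group $Q$. A group hyperbolic relative to hyperbolic subgroups is hyperbolic, so $H$ is hyperbolic, as required.

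\textbf{Case 2.} Now the hypotheses hold for $(\widehat{G},P)$ with kernel $K$. We restrict the action to $\widehat{H}$, which acts cocompactly on the same pseudo--cusped space since it has finite index. The main obstacle is that the lemma is stated for the kernel $K$, while we need information about the smaller kernel $K_{\widehat{H}}$. My plan is to check that each metric hypothesis of \thref{main metric lemma try2} passes to the subgroup pair $(\widehat{H},\mathbb{P}_{\widehat{H}}, K_{\widehat{H}})$. Conditions of the form ``nontrivial kernel elements translate horoball points far'' are inherited, since $K_{\widehat{H}}\subset K$ and its peripheral pieces $\widehat{H}\cap N^x$ lie in conjugates of $N$. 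Conditions on the Cayley-graph part should be inherited because $\widehat{Y}$ is unchanged. I will verify this hypothesis by hypothesis. If some condition is genuinely global, such as injectivity radius of the quotient $K\backslash \widehat{Y}$, I will instead compare $K_{\widehat{H}}\backslash\widehat{Y}$ with $K\backslash\widehat{Y}$ through the cover between them. Since $K_{\widehat{H}}\le K$, the space $K_{\widehat{H}}\backslash\widehat{Y}$ is a cover of $K\backslash\widehat{Y}$, and covers do not decrease local injectivity radius. Once the hypotheses are transferred, the Case 1 argument applies verbatim.

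\textbf{The $PD(3)$ clause.} Assume $(\widehat{G},P)$ is $PD(3)$ and $N\cong Q\cong\ZZ$. A finite index subgroup inherits a $PD(3)$ pair structure $(\widehat{H},\mathbb{P}_{\widehat{H}})$, and each peripheral subgroup is a virtually $\ZZ^2$ piece. The very translating condition, together with the metric lemma, should show that $K_{\widehat{H}}\backslash\widehat{Y}$ is locally modeled, near each filled cusp, on the filling of a solid torus. This is where I expect the real work in this clause to lie. The plan is to invoke the result from Section \ref{PDn stuff} stating that such a filling of a $PD(3)$ pair by cyclic subgroups yields a $PD(3)$ group. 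The hyperbolic $PD(3)$ group $H$ then has $\partial H\cong S^2$ by the Bestvina--Mess type characterization recalled there.
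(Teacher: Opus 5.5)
Your treatment of assertions (1) and (2) matches the paper. In Case 1 you combine \thref{main metric lemma try2} with hyperbolicity of the filled peripherals and \thref{hyp rel to hyps}. The images of the peripherals are $P_i/N_i$ because $N_i\le P_i\cap K_{\widehat{H}}\le P_i\cap K=\widehat{H}\cap N^{x}$, using $P\cap K=N$ from the drilling hypothesis, as in \thref{induced filling for normal subgroups}. In Case 2 you reduce to Case 1 because the quotient by $K$ factors through the quotient by $K_{\widehat{H}}\le K$, so injectivity on balls transfers.

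The gap is in the $PD(3)$ clause. The result you intend to invoke, \thref{filled group is PDn}, has as its first hypothesis that the triple being filled, here $(\widehat{H},\mathbb{P}_{\widehat{H}},\{N_i\}_i)$, is a Cohen--Lyndon triple. Nothing in your proposal produces this. Instead you give the very translating condition the job of showing that $K_{\widehat{H}}\backslash\widehat{Y}$ is locally modeled on a solid--torus filling near each cusp. That is not a hypothesis of \thref{filled group is PDn}. Nor would such a local picture by itself give the free product decomposition of $K_{\widehat{H}}$ into conjugates of the $N_i$, on which the Petrosyan--Sun cohomological argument rests. As written, the passage from ``filling of a $PD(3)$ pair by cyclic subgroups'' to ``$PD(3)$ group'' is unsupported.

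The fix is that the very translating condition's only role is algebraic: by \thref{very translating implies CH}, a very translating triple is Cohen--Lyndon. In Case 1 apply this directly to the induced triple for $\widehat{H}$. In Case 2, first note that the induced triple, acting on the same space, inherits the very translating property from $(\widehat{G},P,N)$. This holds because each $N_i$ is a subgroup of a conjugate of $N$ and the space and horoballs are unchanged (this is \thref{finite index subgroup of geom CH}).

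The remaining hypotheses of \thref{filled group is PDn} are routine and mostly already in your proposal:
\begin{itemize}
\item $(\widehat{H},\mathbb{P}_{\widehat{H}})$ is a $PD(3)$ pair.
\item Each $N_i$ has finite index in a conjugate of $N\cong\ZZ$, so $N_i\cong\ZZ$.
\item Each $\overline{P}_i=P_i/N_i$ embeds in $H$ and is isomorphic to a finite index subgroup of $P^x/N^x\cong Q\cong\ZZ$. Hence it is $\ZZ$, a $PD(1)$ group.
\item $(H,\{\overline{P}_i\}_i)$ is relatively hyperbolic by \thref{main metric lemma try2}.
\end{itemize}
Then $H$ is a hyperbolic $PD(3)$ group, and \thref{bestvina boundary calculator n=3} gives $\partial H\cong S^2$. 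No local geometric model of the quotient near the cusps is needed.
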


In \cite{Drilling}, the authors show that if $G$ is a hyperbolic group with $\partial G\cong S^2$, $g \in G$ generates a maximal cyclic subgroup, and $G$ acts geometrically on a hyperbolic space $X$ with the axes of conjugates of $g$ being sufficiently separated, then they can produce a drilling $(\widehat{G},P,N)$ of $G$ along $g$ with $\partial _P\widehat{G} \cong S^2$. This $\widehat{G}$ has a cusp uniform action on a hyperbolic space $\widehat{Y}$ with many properties, and we show these properties produce branched covers as well:
\begin{restatable}{theorem}{drillingsAreDeepEnough}\label{drillings are deep enough}
    Let $(\widehat{G},P,N)$ be a drilling of $G$ along $g$ constructed from the proof of\\ \cite[Theorem B]{Drilling}. Then every finite index subgroup of $\widehat{G}$ induces a branched cover of $G$. If, in addition, $G$ is torsion free, then each of these branched covers has boundary homeomorphic to $S^2$.
\end{restatable}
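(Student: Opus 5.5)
The plan is to deduce this from Theorem \ref{effective main theorem}, part (2), by checking its hypotheses directly on the drilling of \cite[Theorem B]{Drilling}. The advantage of part (2) is that it concerns only the action of $(\widehat{G},P)$ and the filling kernel $K=\llangle N\rrangle$ on a pseudo--cusped space. These data do not depend on $\widehat{H}$, so a single verification gives the conclusion for every finite index subgroup at once.

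\textbf{Step 1: the pseudo--cusped space.} The proof of \cite[Theorem B]{Drilling} builds $\widehat{Y}$ from the hyperbolic space $X$ on which $G$ acts geometrically. One removes equivariant neighborhoods of the well-separated axes of conjugates of $g$, passes to a cover branched or unwrapped around those axes, and reinserts horoball-like regions; $\widehat{G}$ acts cusp uniformly on the result. I would take $\widecheck{Y}$ to be the complement of the open horoball neighborhoods, i.e.\ the thick part, and check the axioms of \thref{defn pseudo cusped space}. The point is that $\widehat{Y}/K$ recovers $X$, up to quasi-isometry, with the axes restored.

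\textbf{Step 2: the metric assumptions.} Next I would verify the hypotheses of \thref{main metric lemma try2} for $(\widehat{G},P)$ and $K$. These should be large injectivity radius of $K$ near $\widecheck{Y}$ and local hyperbolicity at an appropriate scale in the quotient. Both are quantitative outputs of the construction in \cite{Drilling}. The separation constant of the axes gives the injectivity radius, and the local negative-curvature estimates used there to prove that $\widehat{Y}$ is hyperbolic give the local hyperbolicity. I expect this to be the main obstacle. The constants in \cite{Drilling} were tuned to make $\widehat{G}$ relatively hyperbolic, not to satisfy the present lemma, so the proof must be reopened to extract the required bounds. If they are not quite enough, the natural fix is to increase the separation assumption, which \cite{Drilling} allows.

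\textbf{Step 3: the torsion-free case.} When $G$ is torsion free, we have $N\cong\ZZ\cong\langle g\rangle$, and $(\widehat{G},P)$ is a $PD(3)$ pair. The pair property follows because $\partial_P\widehat{G}\cong S^2$ and $\widehat{G}$ is torsion free, using Section \ref{PDn stuff}. It then remains to verify the very translating condition for the action of $K$. In the construction, each nontrivial element of $K$ is conjugate into $N$ or is a product of such elements. The parabolic elements of $N$ translate the horoball boundaries by amounts controlled by the drilling depth, and the remaining elements displace $\widecheck{Y}$ by large amounts because of the separation. Given this, the final clause of Theorem \ref{effective main theorem} shows that $H$ is a $PD(3)$ group with $\partial H\cong S^2$.
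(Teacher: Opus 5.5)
There is a genuine gap in Step 2, and it is exactly where the paper says the direct route fails. Hypothesis (2) of \thref{main metric lemma try2} is not a local-hyperbolicity condition. It asks that $\widehat{Y}\to\widehat{Y}/K$ be injective on balls of radius $6\cdot 10^7\theta_1$ centered in $\widecheck{Y}$, where $\theta_1$ is the \emph{global} hyperbolicity constant of $\widehat{Y}$. Local hyperbolicity of the quotient is an output of that proof, not an input.

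For the construction of \cite{Drilling}, the certified constant is $\delta_2=1500\delta_1$, so you would need injectivity on balls of radius about $9\cdot 10^{10}\delta_1$. The construction supplies only the systole bound $\sys_0\ge 2^{25\sigma_0}$ for $\widecheck{X}_0=\widecheck{Y}/K$, with $\sigma_0=\max\{10^7\delta_1,10^5D_1\}$. Combined with the horoball distortion estimate $d_{\widecheck{Y}}\le 2\cdot 2^{d_{\widehat{Y}}/2}$, this gives injectivity only on balls of radius a small multiple of $\sigma_0$ (the paper uses $9\sigma_0$). That is far too small unless $D_1$ is enormous compared with $\delta_1$.

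Your fallback of enlarging the separation constant does not rescue this. The statement concerns every drilling produced by the proof of \cite[Theorem B]{Drilling} with its given $\Sigma$; a larger $\Sigma$ proves the result only for a smaller class of groups $G$. The final clause of Theorem \ref{effective main theorem} also presupposes the hypotheses of \thref{main metric lemma try2}, so Step 3 inherits the same gap.

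The missing idea is to bypass \thref{main metric lemma try2} and rerun its proof with the finer data in \thref{properties of Yhat}.

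\emph{Local hyperbolicity.} Coarse Cartan--Hadamard only needs $\sigma_0$--balls in $\widehat{\Gamma}=\widehat{Y}/K_{\widehat{H}}$ to be $\delta_1$--hyperbolic. Balls of radius $\sigma_0$ in $\widehat{Y}$ are already isometric to balls in a $\delta_1$--hyperbolic space; this is the local constant, not $\delta_2$. So it suffices that $p$ be injective on $B(z,9\sigma_0)$ for $z$ of depth at most $3\sigma_0$. Two distinct points identified by $p$ can be moved to depth $0$ at $\widehat{Y}$--distance at most $42\sigma_0$. They are identified by $q$ because $K_{\widehat{H}}<K$, so their $\widecheck{Y}$--distance is at least $2^{25\sigma_0}$. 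This contradicts $d_{\widecheck{Y}}\le 2^{21\sigma_0+1}$.

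\emph{Coarse simple connectivity.} This holds at scale $D_1\le 10^{-5}\sigma_0$ because $\widecheck{Y}$ is $D_1$--simply connected. Hence $\pi_1^{D_1}(\widecheck{\Gamma})\cong K_{\widehat{H}}$ is normally generated by loops in horospheres, and these die in the $5$--simply connected horoballs.

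\emph{Conclusion.} Then $(\widehat{\Gamma},\widecheck{\Gamma})$ is a pseudo--cusped space for $(H,\{\overline{P}_i\}_i)$, and \thref{hyp rel to hyps} with \thref{induced filling for normal subgroups}(3) gives hyperbolicity of $H$.

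For the torsion-free case, feed four ingredients into \thref{filled group is PDn}: this relative hyperbolicity, the very translating property cited from \cite{Drilling}, \thref{finite index subgroup of geom CH}, and \thref{cohom boundary connector}. Note also that the very translating condition concerns nontrivial elements of the $N_i$ displacing points outside their horoball, not all of $K$. Your sketch in Step 3 aims at the wrong condition, though it need not be reproved anyway.
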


Let us explain in more detail why \thref{defn generalized} imitates the topological situation we are interested in. Suppose $X$ is a hyperbolic $3$--manifold and $b:Y \longrightarrow X$ is a branched cover over an embedded geodesic loop $\gamma$. In this case, $X^\circ = X \setminus \{\gamma\}$ is called a \emph{drilling of $X$ along $\gamma$} and we have this commutative diagram, where $\pi$ is the restriction of $b$ and $Y^\circ = Y \setminus b^{-1}(\gamma)$.
\begin{center}
\begin{tikzcd}
Y^\circ \arrow[r, "\pi"] \arrow[d] & X^\circ \arrow[d] \\
Y \arrow[r, "b"]                   & X                
\end{tikzcd}
\end{center}
It follows from Thurston's Geometrization Theorem that $X^\circ$ has a complete finite volume hyperbolic metric (see \cite{knotsInhyp3}), hence $\widehat{G}:= \pi_1(X^\circ)$ is hyperbolic relative to a subgroup $P \cong \mathbb{Z}^2$ corresponding to a torus $T$ which is the boundary of a tubular neighborhood of $\gamma$. Using Seifert--Van Kampen with $T$ and the fact that $T$ bounds a solid torus in $X$ with $\gamma$ as the core, we calculate that $\pi_1(X) = \widehat{G} /\llangle N \rrangle$, where $N \cong \mathbb{Z}$ is the subgroup of $P$ corresponding to a curve in $T$ which bounds a disc in $X$, punctured exactly once by $\gamma$. In other words, $\pi_1(X)$ is a group theoretic Dehn filling of $(\widehat{G},P)$ by $N$. The definition of a drilling is a collection of groups which play the same algebraic and geometric roles as the fundamental groups $X,X^\circ$. Maybe more explicitly, the definition of drilling is exactly what you get if you apply the functor $\pi_1$ to the inclusion $X^\circ \longrightarrow X$.

Because $Y^\circ$ is a finite cover of $X^\circ$, $\widehat{H} = \pi_1(Y^\circ)$ is a finite index subgroup of $\widehat{G}$. It follows that $\pi^{-1}(T)$ has finitely many components, say $T_1,\ldots T_n$. Each $T_i$ is a finite cover of $T$ and is a torus bounding a tubular neighborhood of some component of $b^{-1}(\gamma)$. Again we can use Seifert--Van Kampen to calculate that $\pi_1(Y)=\pi_1(Y^\circ)/\llangle N_i\rrangle$, where $N_i$ is the subgroup of $\pi_1(T_i)$ corresponding to a loop in $T_i$ which bounds a disc in $Y$ punctured exactly once by some component of $b^{-1}(\gamma)$. Any Dehn filling of a relatively hyperbolic group induces a filling for each finite index subgroup, and $H = \pi_1(Y)$ is exactly the induced filling of $\widehat{H}$ by $N$, viewed as a subgroup of $\widehat{G}$. Thus \thref{defn generalized} is exactly what you get if you apply the functor $\pi_1$ to the entire commutative square above. The hyperbolicity of $Y$ is not obvious from the geometry, but our definition requires that $H$ be a hyperbolic group.

Note that $b$ need not be $\pi_1$ surjective or injective. However, if $\widehat{H}$ is normal in $\widehat{G}$, then $\pi$ is a \emph{regular} cover and the deck group acts transitively on fibers. Thus the $T_i$ are all isomorphic as a covers of $T$ and we can use this to put an \emph{orbifold structure} on $X$. Orbifolds are spaces which are locally modeled on $\mathbb{R}^n$, except at points in the \emph{orbifold locus}, which are locally modeled on quotients of $\mathbb{R}^n$ by finite group actions. Here the orbifold locus is $\gamma$, which is locally homeomorphic to the core of the solid tori $T_i$, and the finite group acting is the set of deck transformations of $\pi$ which preserve a given $T_i$ -- these are $N/ (\widehat{H} \cap N) \cong \ZZ/ m \ZZ$ for some $m$. One can define \emph{orbifold fundamental groups} which track these finite groups, and all the tools of covering space theory can be developed for orbifolds, see for example \cite{orbifoldintro}. In particular, the orbifold fundamental group of a manifold is the standard fundamental group, an orbifold covering map induces an injection on orbifold fundamental groups, and orbifold fundamental groups always surject standard fundamental groups. This allows us to factor $b_*$ as an injection followed by a surjection:
\[ \pi_1(Y) = \pi_1^{orb}(Y) \hookrightarrow \pi_1^{orb}(X) \twoheadrightarrow \pi_1(X).\]

In this situation, $\pi_1^{orb}(X) = \widehat{G}/\llangle \widehat{H} \cap N \rrangle$, the image of $N$ in $\pi_1^{orb}(X)$ is isomorphic to $\ZZ/n\ZZ$, which represents the finite group acting to give the orbifold locus, and the surjection $\pi_1^{orb}(X) \longrightarrow \pi_1(X)$ is the quotient by the normal closure of the image of $N$. Therefore we could reasonably extend \thref{defn generalized} in the following way.

\begin{definition}
    Suppose $(\widehat{G},P,N)$ is a drilling of $G$ along $g$, and $\widehat{H}$ is finite index normal subgroup of $\widehat{G}$. Then $G^{orb} = \widehat{G}/\llangle \widehat{H} \cap N\rrangle$ is the \emph{orbifold fundamental group of $G$ induced by $\widehat{H}$} with \emph{orbifold locus} $N /\widehat{H} \cap N$.
\end{definition}

We do not explore these ideas in the present work, but historically, orbifolds have been a natural and useful geometric tool. One example is the Orbifold Theorem \cite{OrbThm}, which geometrizes $3$--orbifolds with nontrivial orbifold locus. Another example is the Smith Conjecture, which is in fact a theorem \cite{SmithConj}. It states that if $M$ is a homotopy $3$--sphere and $h$ is an orientation preserving periodic diffeomorphism of $M$ with fixed points, then the fixed point set $F(h)$ is an unknotted circle. It is not hard to see that $F(h)$ must be an embedded circle, so the quotient of $M$ by $h$ is an orbifold with orbifold locus a circle, and various orbifold theoretic tools connect $M$ and $M/\langle h \rangle$. The present work represents progress in porting these tools into a more general framework so that we can apply them more broadly. For example, one of the central open questions in geometric group theory is the Cannon Conjecture, which states that a hyperbolic group with boundary $S^2$ is virtually Kleinian. There are many variations on this theme, like the following:
\begin{conjecture}(Toral Relative Cannon Conjecture)
    Let $(G,P)$ be a relatively hyperbolic group pair with $P \cong \ZZ^2$ and Bowditch boundary homeomorphic to $S^2$. Then $G$ is virtually Kleinian. 
\end{conjecture}
In \cite[Theorem A]{Drilling}, the authors show that, in the residually finite case, the Toral Relative Cannon Conjecture implies the Cannon Conjecture. Such a group pair $(G,P)$ as in the Toral Relative Cannon Conjecture is \emph{Haken} in the sense of \cite[p. 282]{CoarseAlexander}, which generalizes the Haken property of $3$--manifolds. The Toral Relative Cannon Conjecture is thought to be easier because the Haken property offers the possibility of a hierarchy which enables inductive arguments to progress. In analogy with the $3$--orbifold setting for manifolds, we can hope that this orbifold setting for groups might be easier than the general case.

In \thref{defn generalized}, allowing $Q$ to be any quasiconvex subgroup gives some flexibility in producing examples. In the world of manifolds, this would correspond to drilling a hyperbolic $n$--manifold $M$ along a totally geodesic submanifold $S$ where the inclusion of $S$ is $\pi_1$--injective. Matthew Stover pointed out the following in a personal conversation: If $S$ is codimension 2, then a regular neighborhood $S$ in $M$ is homeomorphic to $S \times D^2$, hence its boundary is homeomorphic to $S \times S^1$. If $M^\circ = M\setminus S$ and $\alpha$ is a loop going once around the $S^1$ coordinate of this boundary, then Seifert--Van Kampen shows that $\pi_1(M) = \pi_1(M^\circ)/\llangle \alpha\rrangle$. Since $\langle \alpha \rangle \cong \mathbb{Z}$ is playing the role of the filling kernel, this gives geometric meaning to the condition that $N \cong \mathbb{Z}$ in Theorem \ref{Main Theorem}, and indicates that it should persist in higher dimensional analogs.

\begin{remark}
    In \cite{randombranched}, Cho, Lafont, and Skipper define and explore random branched covers of $2$--complexes. These are maps between two complexes which are coverings away from finitely many branch points in the interior of $2$--cells. Our focus is on the $3$--manifold case instead.
\end{remark}

We outline the sections of this paper. Section \ref{background} gives the necessary background in relatively hyperbolic groups, Dehn fillings, $PD(3)$ groups, etc. In Section \ref{sec 3} we define a pseudo--cusped space and use it to prove our metric criterion \thref{main metric lemma try2} for when a Dehn filling of a relatively hyperbolic group is relatively hyperbolic. Section \ref{sec building} has two subsections. In the first subsection, we prove the effective version of our main theorem, Theorem \ref{effective main theorem}, then prove Theorem \ref{Main Theorem} by building a suitable pseudo--cusped space for any relatively hyperbolic group and applying the effective version, Theorem \ref{effective main theorem}. The second subsection proves Theorem \ref{drillings are deep enough} about drillings produced in \cite{Drilling}.

Finally, we would like to thank Daniel Groves for suggesting and guiding us on this project.

%% file: Sections/2_Background.tex
\section{Background}\label{background}
\subsection{Combinatorial Horoballs and Relative Hyperbolicity}

\begin{definition}\cite[III.H Definition 1.1]{BH}
    Let $(X,d)$ be a geodesic metric space. A geodesic triangle in $X$ is $\delta$--\emph{slim} if each side is contained in the $\delta$--neighborhood of the other $2$ sides. We say $X$ is $\delta$--hyperbolic if every triangle is $\delta$--slim.
\end{definition}

There are many equivalent formulations of $\delta$--hyperbolicity, see \cite[III.H]{BH} for much more detail. One key property is that being $\delta$--hyperbolic for some $\delta$ is a quasi--isometry invariant \cite[III.H Theorem 1.9]{BH}. 

\begin{definition}[Combinatorial Horoball]\cite[Definition 3.1]{DehnFillinginRHG}
    Let $\Gamma$ be a connected simplicial graph. The \emph{combinatorial horoball based on $\Gamma$} is the graph $\sH(\Gamma)$ with vertex set $\Gamma \times \ZZ_{\geq 0}$ and two types of edges:
    \begin{enumerate}
        \item vertical edges connecting $(v,n)$ to $(v,n+1)$,
        \item horizontal edges connecting $(v,n)$ to $(w,n)$ whenever $d_\Gamma(v,w) \leq 2^n$. 
    \end{enumerate}
    We say that a vertex $(v,n)$ is at \emph{depth $n$} and extend this depth function linearly across edges. 
\end{definition}

The combinatorial horoball on a graph includes a natural copy of the graph at depth $0$, but contracts the metric exponentially as we go deeper. The result is always a hyperbolic space and we can understand how this warps the metric in the base graph:

\begin{lemma}\thlabel{horoballs are hyperbolic}\cite[Theorem 3.8]{DehnFillinginRHG}\cite[Lemma 4.9]{Drilling}
    Let $\Gamma$ be a simplicial graph and let $\sH = \sH(\Gamma)$ be the combinatorial horoball based on $\Gamma$. Then $\theta_0$--hyperbolic, where $\theta_0$ is a universal constant independent of $\Gamma$, and for any vertices $v,w \in \Gamma$, we have \[\frac{1}{2}d_\sH((v,0),(w,0)) -2 \leq \log_2(d_\Gamma(v,w)) \leq \frac{1}{2}d_{\sH}((v,0),(w,0)).\]
\end{lemma}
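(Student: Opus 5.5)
The distance estimate comes from comparing $d_\sH((v,0),(w,0))$ with an explicit family of ``up--across--down'' paths. For the hyperbolicity constant $\theta_0$ we would follow the argument of \cite[Theorem 3.8]{DehnFillinginRHG}. Its essential input is a normal form for geodesics: every pair of vertices in $\sH$ is joined by a geodesic of the form vertical segment, then at most a bounded number (at most $3$) of horizontal edges, then vertical segment. Granting this, triangles in $\sH$ are uniformly thin with a constant independent of $\Gamma$, because the horizontal parts at depth $n$ shrink distances exponentially. We take this part as cited rather than reprove it; the argument given here is for the displayed inequality.

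Write $D = d_\Gamma(v,w) \geq 1$; the case $v=w$ is trivial, reading $\log_2 0 = -\infty$. For the \emph{upper bound on $\log_2 D$}, take a geodesic $\sigma$ in $\sH$ from $(v,0)$ to $(w,0)$ of length $L$. Every vertex of $\sigma$ has depth at most $L/2$, since the path must go down and come back up. A horizontal edge at depth $n$ moves the $\Gamma$--coordinate by at most $2^n$, and vertical edges do not move it. If we used the normal form, $D$ would be at most a few copies of $2^{L/2}$, which only gives $\log_2 D \le L/2 + O(1)$, weaker than stated.

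To get the sharp bound $\log_2 D \le L/2$, we argue by induction on $L$ instead. We first reduce to paths whose depth rises to a maximum $m$ and then falls; this is possible because, by the normal form, a single horizontal level suffices. Such a path uses $2m$ vertical edges and $k = L - 2m$ horizontal edges at depth $m$, so $D \le k\,2^m$. Among geodesics we may assume $k \le 4$ when $m\ge 1$, since two horizontal edges at depth $m$ can be traded for one edge at depth $m+1$ together with two vertical edges, without increasing length. This leaves the maximization of $k 2^m$ subject to $2m + k = L$ with $k \le 4$. The resulting inequality $\log_2(k2^m) \le m + k/2$ holds for $k = 1,\dots,4$: it is $0\le 1/2$, $1\le 1$, $\log_2 3\le 1.5$, and $2\le 2$. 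Hence $\log_2 D \le L/2$.

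For the lower bound, we exhibit a path. Let $m = \lceil \log_2 D\rceil$. Going up $m$ levels, taking one horizontal edge (allowed since $D \le 2^m$), and coming down gives $L \le 2m+1 \le 2\log_2 D + 3$. Hence $\tfrac12 L - 2 \le \log_2 D$.

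The main obstacle is the reduction to single-peak paths with $k\le 4$, that is, the geodesic normal form. We would prove it by repeatedly applying local moves: lowering a horizontal edge at a local depth maximum preceded and followed by descent, or raising pairs of horizontal edges. Each move does not increase length, and each reduces a complexity such as the sum over horizontal edges of $(m - \text{depth})$. This is exactly the content of \cite[Lemma 3.10]{DehnFillinginRHG}, which we would cite.
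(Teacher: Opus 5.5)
There is a genuine gap in the last numerical check of your upper-bound argument. The claim $\log_2 3\le 1.5$ is false: $2^{3/2}=2\sqrt2<3$, so $\log_2 3\approx 1.585$. This cannot be patched, because the displayed upper bound $\log_2 d_\Gamma(v,w)\le \frac12 d_\sH((v,0),(w,0))$ is itself false. The paper gives no proof of it; it only quotes the lemma. For a counterexample, take $\Gamma$ to be a line with $d_\Gamma(v,w)=3$. A path of length $2$ from $(v,0)$ that ends at depth $0$ is either two depth-$0$ horizontal edges, which move at most $2$ in $\Gamma$, or a vertical edge followed by its reverse. Hence $d_\sH((v,0),(w,0))=3$, while $\log_2 3>\frac32$. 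More generally, if $d_\Gamma(v,w)=3\cdot 2^m$, the normal form gives $d_\sH=2m+3$, since a normal-form path of length at most $2m+2$ covers $\Gamma$--distance at most $2^{m+1}$. But $\log_2(3\cdot 2^m)=m+\log_2 3>m+\frac32$. So the case $k=3$ of your maximization is exactly where the claim breaks, and it breaks for infinitely many distances.

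What your reduction does prove is $\log_2 D\le \frac L2+\max_{1\le k\le 3}\bigl(\log_2 k-\frac k2\bigr)=\frac L2+\log_2 3-\frac32$. A correct version of the statement therefore needs an additive constant on that side, for example $\log_2 d_\Gamma(v,w)\le \frac12\bigl(d_\sH((v,0),(w,0))+1\bigr)$. Your lower bound $\frac12 L-2\le\log_2 D$ is correct, and citing the source for $\theta_0$ matches what the paper does.

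Separately, the move you use to bound $k$ is misstated. Replacing two consecutive depth-$m$ horizontal edges by a vertical edge, a horizontal edge at depth $m+1$, and a vertical edge turns length $2$ into length $3$. The correct move pushes the entire horizontal segment one level deeper at once. Then $k$ edges at depth $m$ become $\lceil k/2\rceil$ edges at depth $m+1$, at a total cost of two vertical edges. This does not increase length when $k\ge 4$ and needs no hypothesis $m\ge 1$. It yields the normal form with $k\le 3$, which is precisely the situation in which the stated bound fails.
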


\begin{definition}\thlabel{defn cusped space}
    Let $G$ be a finitely generated group with $\mathbb{P}$ a finite collection of finitely generated subgroups, and choose a finite generating set $S$ for $G$ so that each $P\in \mathbb{P}$ is generated by $P \cap S$. Let $\Gamma$ be a Cayley graph for $G$ using $S$. The \emph{cusped space} $X(G,\mathbb{P})$ is built by attaching, for each $g\in G$ and $P \in \mathbb{P}$, a combinatorial horoball to $gP$ with the metric on $gP$ induced from $\Gamma$.
\end{definition}

The cusped space is not quite determined by $(G,\mathbb{P})$ because there is ambiguity in the choice of generating set for the Cayley graph. Any two choices are quasi--isometric by \cite[Corollary 6.7]{groff}, which makes the following definition well defined.

\begin{definition}\thlabel{defn rh}
    Let $G$ be a group with $\mathbb{P}$ a finite collection of subgroups. $G$ is \emph{hyperbolic relative to} $\mathbb{P}$ if any cusped space $X(G,\mathbb{P})$ is $\delta$--hyperbolic for some $\delta$. The \emph{Bowditch boundary} is $\partial_{\mathbb{P}}G := \partial X(G,\mathbb{P})$. 
\end{definition}

There are many equivalent definitions of relative hyperbolicity. Gromov originally defined relatively hyperbolic groups as groups with a \emph{cusp uniform action}, a definition which we recall now.

\begin{definition}\thlabel{horofunction and coarse horoball}
    Suppose $X$ is a $\delta$--hyperbolic space and $p \in \partial X$. For $x \in X$ and $\gamma$ a geodesic ray tending to $p$, define
    \[h_\gamma(x) = \limsup_{t\longrightarrow \infty}\big(d_\gamma(x,\gamma(t)) - t\big)\]
    The function $h_\gamma$ is a \emph{horofunction based at $p$}. A \emph{coarse horoball centered at $p$} is any set $\sH \subset X$ so that there are real numbers $a <b$ satisfying
    \[h^{-1}((-\infty,a]) \subset \sH \subset h^{-1}((-\infty, b]).\]
\end{definition}

\begin{definition}[Cusp Uniform Action]\thlabel{gromovs defn}\cite[Definition 8.6.A]{OriginalGromov}\cite[Definition 3.3]{Hruska_2010}
    Suppose $G$ acts on a proper $\delta$--hyperbolic metric space $X$ and $\mathbb{P}$ is a set of representatives for the conjugacy classes of maximal parabolic subgroups. Suppose also that there is a disjoint $G$--equivariant collection of coarse horoballs centered at the parabolic points of the action of $G$ on $\partial X$ with union $U$ open in $X$, and such that the quotient of $X \setminus U$ is compact. Then the action of $(G,\mathbb{P})$ on $X$ is called \emph{cusp uniform} and the space $X\setminus U$ is called the \emph{truncated space}.  
\end{definition}

The cusped space together with the deep parts of combinatorial horoballs is a particularly nice example of a cusp uniform action, and having a cusp uniform action is equivalent to being relatively hyperbolic:

\begin{theorem}\thlabel{equivalence of defns}\cite[Theorem 3.25]{DehnFillinginRHG}
    A group pair $(G,\mathbb{P})$ is relatively hyperbolic in the sense of \thref{defn rh} if and only if $(G,\mathbb{P})$ has a cusp uniform action.
\end{theorem}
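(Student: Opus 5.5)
The plan is to prove the two directions separately. In both, the cusped space $X(G,\mathbb{P})$ serves as the model, and hyperbolicity is transferred using the quasi--isometry invariance of hyperbolicity \cite[III.H Theorem 1.9]{BH}.

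For the forward direction, suppose some (hence every) cusped space $X = X(G,\mathbb{P})$ is $\delta$--hyperbolic; I will show the action of $G$ on $X$ is cusp uniform.
\begin{enumerate}
\item \emph{Properness of the action.} Each $P\in\mathbb{P}$ is finitely generated by $P\cap S$. So in each combinatorial horoball a vertex at depth $n$ has finitely many horizontal neighbours, those within $2^n$ in the Cayley graph. Hence $X$ is a locally finite graph, so it is proper, and $G$ acts on it properly by isometries.
\item \emph{The horoballs.} Let $U$ be the union of the open parts of depth $>1$ of all the horoballs. Horoballs on distinct cosets meet only in the Cayley graph $\Gamma$, so these pieces are disjoint and $G$--equivariant.
\item \emph{Compact quotient.} The complement $X\setminus U$ consists of $\Gamma$ together with the depth $\le 1$ layers of the horoballs. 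Its quotient by $G$ is therefore a finite graph, since there are finitely many $G$--orbits of vertices and edges there.
\item \emph{Coarse horoballs.} Inside $\sH(gP)$, vertical rays all converge to a single point $p_{gP}\in\partial X$. Using \thref{horoballs are hyperbolic} and thin triangles, I will check that minus the depth function is within bounded distance of a horofunction based at $p_{gP}$. This makes each deep horoball a coarse horoball in the sense of \thref{horofunction and coarse horoball}.
\item \emph{Identifying the maximal parabolics.} The stabilizer of $p_{gP}$ is $gPg^{-1}$. Conversely, every limit point other than a horoball centre is conical, because the action on $X\setminus U$ is cocompact. So the maximal parabolic subgroups are exactly the conjugates of the $P\in\mathbb{P}$.
\end{enumerate}

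For the reverse direction, suppose $G$ acts cusp uniformly on a proper $\delta$--hyperbolic space $Y$ with horoball union $U$. I will build a $G$--equivariant quasi--isometry $X(G,\mathbb{P})\to Y$.
\begin{enumerate}
\item \emph{Finite generation.} Since $G$ acts coboundedly on $Y\setminus U$, a Milnor--\v{S}varc type argument gives that $G$ is finitely generated. Chains of translates of a compact fundamental domain give a uniform bound; I will use that bound rather than the path metric on $Y\setminus U$, which might not be geodesic.
\item \emph{Peripheral subgroups.} Each $P$ acts coboundedly on the horosphere $\partial B_P$ of its horoball $B_P$. The same Milnor--\v{S}varc type argument shows each $P$ is finitely generated, and that $P$ with its word metric is quasi--isometric to $\partial B_P$ with the path metric induced from $Y\setminus U$.
\item \emph{Definition of the map.} Send $g\in\Gamma$ to $g\cdot x_0$ for a basepoint $x_0\in Y\setminus U$. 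Send a horoball vertex $(v,n)$ over $gP$ to the point at distance $n$ along a geodesic ray from the image of $v$ to the centre of $gB_P$.
\end{enumerate}

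The main obstacle is showing this map is a quasi--isometry on horoballs. The key estimate I need is that for $x,y\in\partial B_P$,
\[ d_Y(x,y) \approx 2\log_2 d_{\partial B_P}(x,y) \]
up to additive error. This matches the estimate of \thref{horoballs are hyperbolic} and comes from the exponential divergence of geodesics in a $\delta$--hyperbolic space. I would prove it as follows. Connect $x$ to $y$ by following the two rays toward the centre down to depth roughly $\log_2 d_{\partial B_P}(x,y)$, where the rays fellow travel. Then use the disjointness of the coarse horoballs and the bounded--difference property of the horofunction to bound how far a geodesic can dip into $B_P$. The matching lower bound follows from projecting a short path onto $\partial B_P$, since a path that stays at depth $t$ projects with exponential cost $\sim 2^{t}$.

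Once the map is a quasi--isometry on each horoball, combine this with the quasi--isometry on $\Gamma$. Coarse surjectivity then follows because $Y = (Y\setminus U)\cup U$. The global quasi--isometry bounds follow by splitting a geodesic in $X$ into pieces lying in $\Gamma$ and pieces lying in single horoballs, and doing the same for quasi--geodesics in $Y$. Hence $X(G,\mathbb{P})$ is hyperbolic.
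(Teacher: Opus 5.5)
The paper does not prove this statement. It quotes it from \cite[Theorem 3.25]{DehnFillinginRHG}, and the comparison with cusp uniform actions goes through the algebraic characterizations surveyed in \cite{Hruska_2010}. Your forward direction is the standard verification and is fine in outline. Two small points:
\begin{itemize}
\item $-\mathrm{depth}$ is within bounded distance of a horofunction only on the horoball itself. Off the horoball, use $h_\gamma(x)\ge d(x,\sH)\ge 0$, as in \thref{finally write it down}.
\item $gPg^{-1}$ is parabolic only when $P$ is infinite, which the statement tacitly assumes.
\end{itemize}

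\textbf{The gap.} The reverse direction fails at your key estimate $d_Y(x,y)\approx 2\log_2 d_{\partial B_P}(x,y)$. For a general cusp uniform space this is false, even up to multiplicative constants, and $X(G,\mathbb{P})$ need not be quasi-isometric to $Y$ at all.

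Hyperbolicity of $Y$ only forces divergence that is \emph{at least} exponential. Via fellow travelling, that gives $d_Y\lesssim \log d_{\partial B_P}$, i.e.\ your map is coarsely Lipschitz after rescaling depth by a constant. Nothing bounds the divergence from above, so the claim that pushing a path at depth $t$ back to $\partial B_P$ costs $\sim 2^t$ has no justification.

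\textbf{A counterexample.} Take $G=P=\mathbb{Z}$. Let $Y$ be the graph on $\mathbb{Z}\times\mathbb{Z}_{\ge 0}$ with vertical edges, and with horizontal edges at depth $n$ joining $(v,n)$ and $(w,n)$ whenever $|v-w|\le 2^{n^2}$.
\begin{itemize}
\item $Y$ is locally finite, hence proper.
\item $Y$ is hyperbolic by the same argument as for combinatorial horoballs. Geodesics can be taken vertical--horizontal--vertical, and the merging depth $\mu(v,w)=\min\{n: 2^{n^2}\ge |v-w|\}$ satisfies $\mu(u,w)\le \max\{\mu(u,v),\mu(v,w)\}+1$.
\item $\mathbb{Z}$ acts cusp uniformly, with $U$ the part of depth $>1$.
\end{itemize}
In $Y$ we have $d_Y((0,0),(L,0))\approx 2\sqrt{\log_2 L}$. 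In $X(\mathbb{Z},\{\mathbb{Z}\})=\sH(\mathbb{Z})$ the same distance is $\approx 2\log_2 L$. Your map is, up to bounded error, the identity on vertices, so it is not a quasi-isometry.

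No other map works either. In $\sH(\mathbb{Z})$, for $r$ large, $r$--separated subsets of $R$--balls have cardinality bounded in terms of $r$ and $R$, and this is a quasi-isometry invariant. In $Y$, the ball of radius $2r$ about a vertex of depth $n$ contains a number of pairwise $r$--separated vertices that tends to infinity with $n$.

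So this direction needs an argument that forgets the internal geometry of the horoballs of $Y$. The standard route passes from the cusp uniform action to Bowditch's fine hyperbolic graph, or to Farb's coned--off Cayley graph with bounded coset penetration. Both see the horoballs only after coning them off, and either leads to Osin's linear relative Dehn function. Groves--Manning then turn that into a linear isoperimetric inequality for $X(G,\mathbb{P})$, hence hyperbolicity.

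\textbf{Finite generation.} Separately, finite generation of $G$ and of the $P$ cannot be extracted from the action by a Milnor--\v{S}varc argument. Take any infinite countable group $A$ with a proper left--invariant metric $d_A$. Then $A$ acts cusp uniformly, with $\mathbb{P}=\{A\}$, on the horoball whose horizontal edges at depth $n$ join $a,b$ when $d_A(a,b)\le 2^n$. If $A$ is infinitely generated, $\partial B_P$ is not even coarsely connected, so the path metric you want to compare with $P$ need not exist. Finite generation has to be a hypothesis, as it implicitly is in \thref{defn rh}, where $X(G,\mathbb{P})$ is only defined for finitely generated $G$ and $P$.
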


See \cite{Hruska_2010} for many more definitions of relative hyperbolicity and their equivalence. The next statement helps us make the connection between the definition of relative hyperbolicity through cusped spaces and cusp uniform actions more explicit.

\begin{proposition}\thlabel{finally write it down}
    Suppose $\widecheck{Y}$ is a simplicial graph and $\widehat{Y}$ is constructed by adding combinatorial horoballs to a collection of connected subgraphs of $\widecheck{Y}$ with the induced path metric so that the resulting space $\widehat{Y}$ is $\delta$--hyperbolic for some $\delta$. If $\sH$ is one of these combinatorial horoballs and $\sH^L$ is the points of $\sH$ with depth at least $L$, then $\sH^L$ is a coarse horoball in the sense of \thref{horofunction and coarse horoball}.
\end{proposition}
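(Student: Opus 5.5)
Let $\sH=\sH(\Gamma)$ be one of the attached horoballs, based on a connected subgraph $\Gamma\subset\widecheck{Y}$. Since the horoball is a graph with vertices $\Gamma\times\ZZ_{\geq 0}$, a vertical ray $\gamma(t)=(v_0,t)$ starting at any base vertex $v_0\in\Gamma$ is a natural candidate geodesic ray. The plan is to show that $\gamma$ is a geodesic ray in $\widehat{Y}$ tending to a point $p\in\partial\widehat{Y}$, and that its horofunction satisfies $h_\gamma(x)=-\mathrm{depth}(x)+O(1)$ on $\sH$ and $h_\gamma(x)\geq -O(1)$ outside $\sH$. Here every constant depends only on $\delta$ and the universal constant $\theta_0$. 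Then $\sH^L$ is sandwiched between two sublevel sets of $h_\gamma$, which is exactly the definition of a coarse horoball.

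\textbf{Step 1: geodesics in $\widehat{Y}$ between points of $\sH$.} Any path in $\widehat{Y}$ leaving $\sH$ must exit and re-enter through depth $0$, i.e.\ through $\Gamma$. The metric on $\Gamma$ is the induced path metric from $\widecheck{Y}$, so any excursion outside $\sH$ between $u,w\in\Gamma$ has length at least $d_{\widecheck{Y}}(u,w)$. If $\Gamma$ is path-isometrically embedded in $\widecheck{Y}$, this is at least $d_\Gamma(u,w)$, and that is at least $d_\sH((u,0),(w,0))$. So I expect $\sH$ to be isometrically embedded in $\widehat{Y}$, or at least quasi-isometrically with constants controlled by \thref{horoballs are hyperbolic}. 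The standard description of geodesics in a combinatorial horoball then gives explicit distances: a geodesic goes up vertically, travels a bounded number of horizontal edges, and comes back down. This yields
\[ d(x,(v_0,t)) = t-\mathrm{depth}(x) + O(1) \]
for $x\in\sH$ and $t$ large. In particular $\gamma$ is geodesic, and $h_\gamma(x)= -\mathrm{depth}(x)+O(1)$ on $\sH$.

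\textbf{Step 2: points outside $\sH$.} For $x\notin\sH$, any geodesic from $x$ to $\gamma(t)$ passes through some $(u,0)\in\Gamma$. Therefore
\[ d(x,\gamma(t))\geq d((u,0),\gamma(t))\geq t-C, \]
so $h_\gamma(x)\geq -C$. Combining the two steps, $\sH^L\subset h_\gamma^{-1}((-\infty,-L+C])$. Conversely, if $h_\gamma(x)\leq -L-C'$ for suitable $C'$, then $x\in\sH$ with depth at least $L$, so $h_\gamma^{-1}((-\infty,-L-C'])\subset\sH^L$. This gives the required $a<b$.

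\textbf{Main obstacle.} The hard step is Step 1's embedding claim. The statement only says the subgraphs carry the induced path metric, so $d_\Gamma$ may exceed $d_{\widecheck{Y}}$, and shortcuts through $\widecheck{Y}$ or through other horoballs could shorten distances. Even then, the lower bound $d(x,\gamma(t))\geq t-\mathrm{depth}(x)$ survives, since depth is $1$-Lipschitz along edges of $\sH$ and a path leaving $\sH$ restarts at depth $0$. The upper bound comes from the vertical path itself. Hence $h_\gamma(x)=-\mathrm{depth}(x)+O(1)$, with $h_\gamma(x)\geq -\mathrm{depth}(x)$ on $\sH$ and $h_\gamma(x)\leq -\mathrm{depth}(x)+2$ via the vertical path through the nearest vertex.

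I would therefore carry out this depth-Lipschitz argument directly, avoiding any use of the embedding claim. The one remaining item is that $\gamma$ converges to a boundary point. This follows because $\gamma$ is geodesic, by the same lower bound applied to $x=\gamma(s)$, and $\widehat{Y}$ is $\delta$-hyperbolic.
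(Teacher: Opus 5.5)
Your final argument is correct and is essentially the paper's proof. You set aside the isometric-embedding detour in Step 1, which is rightly unnecessary. What remains matches the paper's: the vertical ray $\gamma$, the bound $h_\gamma(x)\geq -\mathrm{depth}(x)$ on $\sH$, and $h_\gamma\geq 0$ off $\sH$, both because depth is $1$--Lipschitz and any path re-enters $\sH$ through depth $0$. The upper bound $h_\gamma(x)\leq -\mathrm{depth}(x)+O(1)$ comes, as in the paper, from going straight down and then crossing a single horizontal edge once $2^t$ exceeds the $\Gamma$--distance to the base vertex. You also check that $\gamma$ is a geodesic ray and treat non-vertex points, both of which the paper takes for granted.
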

\begin{proof}
    Fix some vertex $p \in \sH$ with depth $0$, let $\gamma$ be the vertical geodesic going straight down in $\sH$, and let $h_\gamma$ be the horofunction based at the limit point of $\sH$ using $\gamma$. 

    Suppose $x \notin \sH$. For any $t$, a geodesic from $x$ to $\gamma(t)$ must first travel to $\sH$, and then down to depth $t$, so 
    \[d(\gamma(t),x) -t \geq d(\sH,x) + t - t= d(\sH,x)\]
    and $h_\gamma(x) \geq d(\sH,x)$. Distances are always positive, so this implies $h^{-1}_\gamma((-\infty,\alpha]) \subset \sH$ for any $\alpha \leq 0$.

    Now suppose $x \in \sH$ is a vertex with depth $\alpha$. For any $t$, any path from $x$ to $\gamma(t)$ must travel at least $|t - \alpha|$ vertically to account for the change in depth. For $t \geq \alpha$, this means 
    \[d(\gamma(t),x) - t \geq t- \alpha - t = -\alpha\]
    and $h_\gamma(x) \geq - \alpha$. Therefore if $x \in h_{\gamma}^{-1}((-\infty,a])$ for some $a \leq 0$, we must have $-\alpha \leq a$. In particular, $h_{\gamma}^{-1}((-\infty,-L]) \subset  \sH^L$. 
    
    For the other inclusion, suppose $x \in \sH^L$ has depth $\alpha \geq L$, and let $\gamma$ be the geodesic starting at $x$ and traveling straight vertically down. Because distances shrink exponentially as we go deeper in $\sH$, there is some $t_0$ so that for all $t > t_0$, $d(\gamma(t+\alpha), \gamma_x(t)) = 1$, hence $d(\gamma(t + \alpha),x) \leq t + 1$. Using that $\alpha \geq L$, we have
    \[d(\gamma(t+\alpha),x) -(t + \alpha) \leq (t+ 1) - (t-\alpha) \leq 1-L,\]
    which implies $h_\gamma(x) \leq 1-L$ and $\sH^L \subset h^{-1}_\gamma((-\infty,1-L])$. 
\end{proof}

We are interested in finite index subgroups of relatively hyperbolic groups, which inherit a relatively hyperbolic structure from the larger group in the following way.

\begin{definition}\thlabel{induced peripheral}(Induced Peripheral Structure)
    Let $(G,\mathbb{P})$ be a relatively hyperbolic group, and suppose $H<G$ is a finite index subgroup. Let $\mathbb{P}_H$ be a finite collection of $H$--conjugacy class representatives for subgroups in the set  
    \[\{H \cap P^g \; | \; g \in G,\, P \in  \mathbb{P},\, |H\cap P^g| = \infty\}.\]
    Then $\mathbb{P}_H$ is the \emph{induced peripheral structure for $H$}.
\end{definition}

\begin{lemma}\thlabel{fin index subgroup is rel hyp}
    If $(G,\mathbb{P})$ is relatively hyperbolic and $H<G$ is a finite index subgroup with induced peripheral structure $\mathbb{P}_H$, then $(H,\mathbb{P}_H)$ is relatively hyperbolic and $\partial_{\mathbb{P}_H}H \cong \partial_{\mathbb{P}}G$.
\end{lemma}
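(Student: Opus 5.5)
The plan is to use the cusp uniform characterization from \thref{equivalence of defns} instead of building a new cusped space for $H$. Since $(G,\mathbb{P})$ is relatively hyperbolic, \thref{equivalence of defns} gives a proper $\delta$--hyperbolic space $X$ with a cusp uniform action of $(G,\mathbb{P})$. Concretely, we may take $X$ to be a cusped space $X(G,\mathbb{P})$ with the equivariant family of coarse horoballs given by deep parts of the combinatorial horoballs, using \thref{finally write it down}. We restrict this action to $H$. The space $X$ and its boundary are unchanged, so once we show the restricted action is cusp uniform for $(H,\mathbb{P}_H)$, the identification $\partial_{\mathbb{P}_H}H \cong \partial X \cong \partial_{\mathbb{P}}G$ follows. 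This requires that the Bowditch boundary be independent of the chosen cusp uniform action, which we take as part of the standard equivalence of definitions (cf.\ \cite{Hruska_2010}).

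The checks are as follows.
\begin{enumerate}
\item The same $G$--equivariant family of disjoint coarse horoballs is in particular $H$--equivariant, and its union $U$ is still open.
\item The map $H\backslash(X\setminus U) \to G\backslash(X\setminus U)$ has finite fibres, of size at most $[G:H]$. Equivalently, if $C$ is a compact set whose $G$--translates cover $X\setminus U$ and $g_1,\dots,g_k$ are coset representatives with $G=\bigcup_i Hg_i$, then the compact set $\bigcup_i g_iC$ has $H$--translates covering $X\setminus U$. So the truncated quotient is compact.
\item The parabolic points for $H$ are the same as those for $G$. If $p$ is fixed by the maximal parabolic $P^g$, then $H\cap P^g$ has finite index in $P^g$. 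It is therefore infinite and parabolic at $p$, and since $p$ is the centre of a horoball it is a parabolic point for $H$. Conversely, any parabolic element of $H$ lies in some conjugate $P^g$, because $G$'s maximal parabolics are the stabilizers of the horoball centres.
\end{enumerate}

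The step I expect to need the most care is showing that $\mathbb{P}_H$, as defined in \thref{induced peripheral}, really is a set of representatives for the $H$--conjugacy classes of maximal parabolic subgroups of $H$, and that it is finite. Maximality: the stabilizer in $H$ of the parabolic point fixed by $P^g$ is $H\cap \mathrm{Stab}_G(p) = H\cap P^g$, using that $P^g$ is the full stabilizer of $p$ in $G$. Finiteness: the $G$--orbits of parabolic points are indexed by the finitely many $P\in\mathbb{P}$. Each $G$--orbit $G/P$ splits into the double cosets $H\backslash G/P$, and there are at most $[G:H]$ of these. Hence there are finitely many $H$--orbits of parabolic points, and so finitely many $H$--conjugacy classes in the set of \thref{induced peripheral}.

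Finally, each $H\cap P^g$ is infinite, being finite index in the infinite group $P^g$; the peripheral subgroups can be taken infinite, since finite ones contribute no parabolic points. So the condition $|H\cap P^g|=\infty$ in \thref{induced peripheral} is automatic and discards nothing. With these facts the action of $(H,\mathbb{P}_H)$ on $X$ is cusp uniform, and \thref{equivalence of defns} gives relative hyperbolicity together with the boundary identification.
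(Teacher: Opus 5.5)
Your proposal is correct and follows the paper's argument. You restrict a cusp uniform action of $(G,\mathbb{P})$ to $H$, get cocompactness on the truncated space from finite index, identify the maximal parabolic subgroups of $H$ with the groups $H\cap P^g$, and conclude by \thref{equivalence of defns}; beyond the paper, you also make explicit the finiteness of $\mathbb{P}_H$ (via the double cosets $H\backslash G/P$) and the independence of the Bowditch boundary from the chosen cusp uniform space, which the paper uses tacitly.
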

\begin{proof}
    Suppose $(G,\mathbb{P})$ has a cusp uniform action on $X$. Then the conjugates of elements of $\mathbb{P}_H$ are exactly the maximal parabolic subgroups for $H$ acting on $X$, and the action of $H$ on the truncated space of $X$ is cocompact because $H$ is finite index in $G$ and the action of $G$ is cocompact. Thus any cusp uniform action for $(G,\mathbb{P})$ induces a cusp uniform action for $(H,\mathbb{P}_H)$, and the result follows from \thref{equivalence of defns}.
\end{proof}

There are many ways to define quasiconvexity for a subgroup of a (relatively) hyperbolic group, see \cite[\S 6]{Hruska_2010} for many of them. One straightforward definition uses the following notion of quasiconvexity in a general metric space.
\begin{definition}
    If $A$ is a subset of a geodesic metric space $(X,d)$, then $A$ is $K$--\emph{quasiconvex} if for any points $a,b \in A$ and any geodesic $\gamma$ between $a$ and $b$, $\gamma$ is contained in the $K$--neighborhood of $A$. 
    
\end{definition}
\begin{definition}
    
    Suppose $G$ is a hyperbolic group, and $X$ is a Cayley graph for $G$. A subgroup $Q$ of $G$ is \emph{quasiconvex} if it is $K$--quasiconvex for some $K$ as a subset of $X$. 
\end{definition}

It is important to note that the notion of quasiconvexity for hyperbolic groups is independent of the choice of Cayley graph, but \emph{does} depend on the Cayley graph for other groups, for example $\ZZ^2$. The only way we use quasiconvexity in this work is this theorem.

\begin{theorem}\cite[III.$\Gamma$.3.6]{BH}
    If $Q$ is a quasiconvex subgroup of a hyperbolic group $G$, then $Q$ is hyperbolic.
\end{theorem}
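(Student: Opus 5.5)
This is the classical result that quasiconvex subgroups of hyperbolic groups are themselves hyperbolic. The plan is to show that $Q$ is finitely generated and quasi-isometrically embedded in $G$, and then transfer hyperbolicity along the quasi-isometry. The transfer uses the invariance of hyperbolicity under quasi-isometry between geodesic spaces \cite[III.H Theorem 1.9]{BH}.

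\textbf{Step 1: finite generation.} Fix a Cayley graph $X$ of $G$ with respect to a finite generating set $S$, and suppose $Q$ is $K$--quasiconvex in $X$. Let $q\in Q$ and choose a geodesic $\gamma$ in $X$ from $1$ to $q$. Mark its vertices $x_0=1, x_1,\dots,x_n=q$, so that $d(x_i,x_{i+1})=1$.

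By quasiconvexity, each $x_i$ lies within $K$ of some $q_i\in Q$. Take $q_0=1$ and $q_n=q$. Then
\[ d(q_i,q_{i+1}) \le 2K+1, \qquad\text{so}\qquad q_i^{-1}q_{i+1}\in Q\cap B(1,2K+1). \]
Hence the finite set $T=Q\cap B(1,2K+1)$ generates $Q$.

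\textbf{Step 2: the inclusion is a quasi-isometric embedding.} The same decomposition gives $|q|_T\le n=|q|_S$. In the other direction, $|q|_S\le (2K+1)|q|_T$ holds trivially. So the inclusion of $Q$ with the word metric $d_T$ into $(G,d_S)$ is bi-Lipschitz onto its image.

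\textbf{Step 3: transferring thin triangles.} This is the main point, though it is routine. The Cayley graph of $(Q,T)$ is a geodesic space, but $Q$ is not quasi-isometric to all of $G$, so Theorem 1.9 cannot be applied directly. I would argue in one of two ways.

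\emph{Via stability of quasi-geodesics.} A geodesic triangle in $\mathrm{Cay}(Q,T)$ maps to a triangle of quasi-geodesics in $X$. By Step 2 these quasi-geodesics have uniform constants. In the $\delta$--hyperbolic space $X$, each side lies within a uniform Hausdorff distance $R$ of an actual geodesic with the same endpoints, and geodesic triangles in $X$ are $\delta$--slim. So each side of the image triangle lies in the $(2R+\delta)$--neighborhood of the other two sides. Pulling back via the bi-Lipschitz bound, and using that nearby points of $X$ near the image correspond to nearby vertices of $Q$, gives a uniform slimness constant in $\mathrm{Cay}(Q,T)$.

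\emph{Via a quasi-convex subset.} Alternatively, note that the $K$--neighborhood of $Q$ in $X$, suitably thickened to be connected, is itself a geodesic space. It is quasi-isometric to $Q$, and its geodesics stay uniformly close to $X$-geodesics, so it is hyperbolic.

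The main care needed is the bookkeeping in Step 3 to make the constants uniform. Everything else is formal.
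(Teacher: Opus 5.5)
Your proof is correct and follows the standard argument behind the result the paper cites from Bridson--Haefliger. Steps 1--2 are their lemma that a $K$--quasiconvex subgroup is generated by the finite set $Q\cap B(1,2K+1)$ and is quasi-isometrically embedded, after which hyperbolicity follows from III.H Theorem 1.9. That theorem is stated for quasi-isometric embeddings $X'\to X$ between geodesic spaces, not only for quasi-isometries, so it applies directly to $\mathrm{Cay}(Q,T)\to X$; your Step 3 (stability of quasi-geodesics plus slim triangles) just re-derives its proof.
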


The following allows us to conclude that some relatively hyperbolic groups are hyperbolic by understanding the peripheral subgroups.

\begin{proposition}\thlabel{hyp rel to hyps}\cite[Corollary 2.41]{OsinMemoirs}
    Suppose $(G,\mathbb{P})$ is relatively hyperbolic and each element of $\mathbb{P}$ is also a hyperbolic group. Then $G$ itself is hyperbolic. 
\end{proposition}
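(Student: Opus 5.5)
This proposition is quoted from \cite[Corollary 2.41]{OsinMemoirs}, so within the paper it can simply be cited; still, here is how I would prove it directly from the definitions given above. The plan is to show that the group $G$ admits a finite presentation with a linear Dehn function, using the relative presentation machinery. First, since each $P\in\mathbb{P}$ is hyperbolic, each $P$ is finitely presented with a linear Dehn function. Second, relative hyperbolicity of $(G,\mathbb{P})$ (equivalently, hyperbolicity of the cusped space $X(G,\mathbb{P})$, \thref{defn rh}) gives a finite relative presentation of $G$ with respect to $\mathbb{P}$ whose relative Dehn function is linear. Combining the two, $G$ is finitely presented by the finite generating set $S$ of \thref{defn cusped space} together with the finite relators of each $P$ and the finitely many relative relators.

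For the Dehn function of $G$, I would take a null-homotopic word $w$ over $S$ of length $n$. I would fill it by a relative van Kampen diagram with $O(n)$ relative cells; the remaining boundary pieces are words in the peripheral alphabets, which are trivial in $P$. The key point is that the total length of these peripheral words is bounded by $Cn$. This is the standard estimate that the sum of perimeters of the peripheral regions in a minimal relative diagram is linear in the boundary length. Filling each such word using the linear Dehn function of $P$ then gives a total area $O(n)$. A linear Dehn function implies $G$ is hyperbolic.

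An alternative, more geometric route stays inside the cusped space. Hyperbolicity of each $P$ would be used to show that the Cayley graph $\Gamma$ of $G$ is itself hyperbolic. Take a geodesic triangle in $\Gamma$ and push it into $X(G,\mathbb{P})$, where it is thin. The portions that dip into horoballs correspond to subpaths whose endpoints lie on a common coset $gP$. I would replace those by geodesics in $gP$, which are quasigeodesics in $\Gamma$ because finitely generated peripheral subgroups are undistorted. Hyperbolicity of $gP$ then controls the thinness of the resulting triangles inside each coset.

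The main obstacle in either route is the same uniform bound: the total length of peripheral pieces, or equivalently the control of how quasigeodesics in $\Gamma$ track horoball shortcuts, must be linear. In the geometric route this amounts to showing that $\Gamma$-geodesics fellow-travel cusped-space geodesics outside horoballs and penetrate horoballs in a controlled way (bounded coset penetration). I would attempt this via \thref{horoballs are hyperbolic}, which shows that horoball paths of depth $k$ correspond to base distances of about $2^k$. Given the length of that argument, I would ultimately defer to Osin's Corollary 2.41 as the paper does.
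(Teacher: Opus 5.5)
Your proposal is correct and matches the paper, which gives no argument of its own and simply cites \cite[Corollary 2.41]{OsinMemoirs}, as you ultimately do. Your first sketch is essentially Osin's own route. A linear relative Dehn function, combined with the linear bound on the total perimeter of peripheral cells in a reduced relative van Kampen diagram, bounds the Dehn function of $G$ in terms of those of the peripheral subgroups, so it is linear when they are hyperbolic.
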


\subsection{Coarse Fundamental Group}

\begin{definition}[Coarse Fundamental Group] \cite[Appendix A]{CoarseCH}
    Let $\Gamma$ be a connected graph with a vertex $a_0$ and let $D >0$. Let $\Gamma^D$ be the space obtained by attaching a disk to all edge loops of length $\leq D$ and set 
    \[\pi_1^D(\Gamma,a_0) = \pi_1(\Gamma^D,a_0).\]
    The \emph{$D$--universal cover of $\Gamma$} is the natural preimage of $\Gamma$ in the universal cover of $\Gamma^D$. We say $\Gamma$ is \emph{$D$--simply connected} if $\pi_1^D(\Gamma,a_0) = \{1\}$, equivalently, if $\pi_1(\Gamma,a_0)$ is generated by loops freely homotopic to loops of length at most $D$. If $\Gamma$ is $D$ simply connected for some $D$, we say $\Gamma$ is \emph{coarsely simply connected}.
\end{definition}

\begin{lemma}\cite[proof of III.$\Gamma.2.6$]{BH}\thlabel{Hyperbolic spaces are coarsely simply connected}
    If $\Gamma$ is a $\delta$--hyperbolic graph with integer edge lengths, then $\Gamma$ is $16\delta$--simply connected. 
\end{lemma}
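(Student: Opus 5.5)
The plan is to follow the standard argument that slim triangles give a coarse filling of loops by short loops, in the form used in \cite[III.$\Gamma$.2.6]{BH}. Fix a base vertex $a_0$. It suffices to show that every edge loop $c$ in $\Gamma$ based at $a_0$ is homotopic in $\Gamma^{16\delta}$ to a product of conjugates of edge loops of length at most $16\delta$. This is because $\pi_1(\Gamma,a_0)$ is generated by edge loops: every path in a graph is homotopic rel endpoints to an edge path. I will argue by induction on the length $n$ of the edge loop $c$. Loops of length $\leq 16\delta$ are trivial in $\Gamma^{16\delta}$ by construction, so the base case is immediate.

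For the inductive step, take an edge loop $c$ of length $n > 16\delta$. Choose vertices $x,y,z$ on $c$ that cut it into three arcs $c_1,c_2,c_3$, each of length roughly $n/3$. Connect the vertices pairwise by geodesics $[x,y],[y,z],[z,x]$. Because edge lengths are integers and the vertices are vertices, I can take these geodesics to be edge paths. The loop $c$ then decomposes, up to homotopy rel basepoint, into four loops:
\begin{itemize}
\item three loops $c_i\cdot[\text{endpoints}]^{-1}$, each of length at most $2\lceil n/3\rceil$, and
\item the geodesic triangle $\Delta=[x,y]\cup[y,z]\cup[z,x]$.
\end{itemize}
When $n$ is large relative to $\delta$, each of the three loops has length strictly less than $n$, so the inductive hypothesis applies to them. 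Small values of $n$ just above $16\delta$ need some care. There, instead of thirds, I will subdivide $c$ at a vertex pair at distance at most half of $n$, using a single chord; this reduces the length strictly whenever the chord is shorter than the arc it replaces.

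It remains to fill the geodesic triangle $\Delta$ by loops of length $\leq 16\delta$, and I expect this to be the main step. The approach is to walk along the sides in small increments. Since triangles are $\delta$--slim, each vertex $p$ on $[x,y]$ lies within $\delta$ of a point $p'$ on $[y,z]\cup[z,x]$. Choosing consecutive vertices $p_0,\dots,p_k$ along $[x,y]$ together with their projections $p_i'$, each ladder rung $p_i p_{i+1} p_{i+1}' \cdots p_i'$ has length bounded by
\[
1 + 2(\delta+1) + d(p_i',p_{i+1}').
\]
The term $d(p_i',p_{i+1}')$ is at most $2\delta+3$ when $p_i'$ and $p_{i+1}'$ lie on the same side. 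The delicate points are where the projection switches from $[y,z]$ to $[z,x]$, and the corners near the internal point of the triangle; there I will use the insize/tripod estimate, which says the internal points are within $4\delta$ of each other. Adding up the worst case (sides of length about $4\delta$ plus jumps at the corners) is what produces the constant $16\delta$. Each rung is then a loop of length $\leq 16\delta$, so $\Delta$ is a product of conjugates of such loops.

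The hard part is the bookkeeping of constants near the triangle's center so that each rung stays below $16\delta$. Rounding also matters, since $\delta$ need not be an integer while distances between vertices are. If the bound is tight, I will allow rungs on the order of $8\delta+O(1)$ and check that, for $\delta\geq 1/2$, this is $\leq 16\delta$. When $\delta<1/2$, a graph with integer edge lengths is a tree, so every edge loop is null-homotopic in $\Gamma$ itself and the statement holds trivially.
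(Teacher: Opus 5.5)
Your reduction to geodesic triangles is fine: cut $c$ at three vertices, and peel off the three arc--chord loops by induction, each of length at most $2\lceil n/3\rceil<n$. This is a different route from the one the paper cites. The argument in \cite[III.$\Gamma$.2.6]{BH} never fills triangles. It uses that $k$--local geodesics with $k>8\delta$ are quasi--geodesics, so a long closed edge loop has a non--geodesic subpath of length about $8\delta$. Replacing that subpath by a geodesic shortens the loop and splits off a loop of length less than about $16\delta$.

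The gap is in the step you yourself flag as the main one: the filling of $\Delta$ is never carried out, and the ladder you describe does not do it. When the projection of $p_i$ lies on $[z,x]$ and that of $p_{i+1}$ lies on $[y,z]$, the return path ``along the sides'' from $p_{i+1}'$ to $p_i'$ runs through the corner $z$ and has unbounded length. If you instead join $p_i'$ to $p_{i+1}'$ by a geodesic of length at most $2\delta+1$, you are left with a whole corner region bounded by $[p_{i+1}',z]\cup[z,p_i']$ and that short segment, which a ladder along $[x,y]$ never subdivides. The missing ingredient is the tripod (thin--triangle) structure. From $\delta$--slimness, show that the vertices of $[x,y]$ and $[x,z]$ at the same distance $t\le (y\cdot z)_x$ from $x$ are within $4\delta$ of each other; the key observation is that if such a point of $[x,y]$ is $\delta$--close to $[y,z]$, then $t\ge (y\cdot z)_x-\delta$. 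Then fill $\Delta$ by three corner ladders, with rungs of length at most $2\lfloor 4\delta\rfloor+2$, plus one central loop.

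With that correction, the constants do not come out as you predict. The central loop consists of three rungs of length at most $\lfloor 4\delta\rfloor$ at the internal points, and three side segments of length at most $1$ (each is a sum of two fractional parts of Gromov products, hence an integer less than $2$). So it has length up to $3\lfloor 4\delta\rfloor+3\le 12\delta+3$. This central loop, not a rung of length $8\delta+O(1)$, is the binding constraint, and $12\delta+3\le 16\delta$ only for $\delta\ge 3/4$. Your case split at $\delta=1/2$ therefore leaves $1/2\le\delta<3/4$ uncovered.

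For simplicial graphs this gap can be closed. A shortest cycle is isometrically embedded. If it has length $L\ge 3$, the triangle with two antipodal vertices and the midpoint of one arc as third vertex shows $\delta\ge L/4\ge 3/4$, so $\delta<3/4$ forces a tree. Your claim that $\delta<1/2$ forces a tree is false for graphs with loops or multiple edges, though harmless there.

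Finally, drop the single--chord fallback for small $n$. It is unnecessary, since thirds already shorten every loop with $n\ge 5$. It can also fail, since an isometrically embedded cycle has no chord shorter than the arcs it cuts off.
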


In the case of combinatorial horoballs, we can do even better than this general result. 

\begin{lemma}\thlabel{horoballs are simply connected}
    If $\Gamma$ is a connected graph, then $\sH(\Gamma)$ is $5$--simply connected.
\end{lemma}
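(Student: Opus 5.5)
We prove that every edge loop in $\sH = \sH(\Gamma)$ is homotopic in $\sH^5$ to a constant loop, by pushing loops down in depth until they become trivial.

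\textbf{Step 1: basic $2$--cells.} Call a loop of length at most $5$ a basic cell. We need two types.
\begin{enumerate}
    \item Take a horizontal edge $[(v,n),(w,n)]$. The two vertical edges from its endpoints to depth $n+1$ and the edge $[(v,n+1),(w,n+1)]$ close it into a square of length $4$. The last edge exists because $d_\Gamma(v,w) \leq 2^n \leq 2^{n+1}$.
    \item Take a horizontal path $(u,n),(v,n),(w,n)$ of two edges. Then $d_\Gamma(u,w) \leq 2^{n+1}$, so $(u,n+1)$ and $(w,n+1)$ are adjacent. The two vertical edges and this edge close the path into a pentagon of length $5$.
\end{enumerate}

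\textbf{Step 2: reduce to horizontal loops.} Let $\ell$ be an edge loop. Let $m$ be the maximal depth it reaches, and let $M$ be any integer at least $m$.
\begin{enumerate}
    \item Use the type-1 squares to slide each horizontal edge at depth $k$ down to depth $k+1$. Each slide inserts vertical backtracking.
    \item Repeat until every horizontal edge lies at depth $M$.
    \item Cancel the vertical backtracks. This is a free homotopy in the graph.
\end{enumerate}
The result is a loop made of vertical segments going down from $\ell$, a horizontal loop at depth $M$, and the same vertical segments going back up. So it suffices to show that every horizontal edge loop at a single depth is trivial in $\pi_1(\sH^5)$.

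\textbf{Step 3: shorten the horizontal loop.} Let $\sigma$ be a horizontal loop of length $k \geq 2$ at depth $M$. Pair its edges into consecutive pairs, plus one leftover edge if $k$ is odd.
\begin{enumerate}
    \item Replace each pair by a type-2 pentagon. The pair becomes down, one horizontal edge at depth $M+1$, up.
    \item Replace the leftover edge by a type-1 square.
    \item Cancel the adjacent down/up backtracks between consecutive pieces.
\end{enumerate}
The outcome is a conjugate of a horizontal loop at depth $M+1$ of length $\lceil k/2\rceil$. Iterating brings the length down to at most $2$. A loop of length $1$ cannot occur, since the graph is simplicial. A loop of length $2$ is a backtrack and hence trivial. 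This proves that $\sH(\Gamma)$ is $5$--simply connected.

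\textbf{Main obstacle.} The real work is bookkeeping: checking that each replacement is a homotopy across cells of length at most $5$ and that the backtracks really cancel. If $\Gamma$ has multi-edges, the definition would need some care, but it is stated for simplicial graphs.
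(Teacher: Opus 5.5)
Your proof is correct, but it takes a different route from the paper. The paper gives no direct argument. It notes that Groves--Manning define the combinatorial horoball as a $2$--complex whose $2$--cells are horizontal triangles, vertical squares and pentagons, all of perimeter at most $5$. It then cites their Proposition 3.7 that this complex is simply connected, so attaching disks to all edge loops of length at most $5$ also gives a simply connected space.

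You instead prove the simple connectivity directly on the graph. Squares push all horizontal edges down to one depth, and then pentagons (plus a square for an odd leftover edge) halve the length of the horizontal loop at each further depth. The citation buys brevity. Your argument is self-contained, works directly with the graph $\sH(\Gamma)$ as the paper defines it rather than relying on Groves--Manning's $2$--cell structure, and shows that the triangles are not needed at all.

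Two small bookkeeping points. First, in Step 3 a consecutive pair can be a backtrack $(u,M),(v,M),(u,M)$. Then there is no edge from $(u,M+1)$ to itself and no pentagon. So either freely reduce the loop before each halving, or let such a pair contribute a constant path, which only shortens the new loop. Second, every loop of length at most $5$ bounds a disk in $\sH(\Gamma)^5$ by definition. You can therefore stop halving once the length is at most $5$, which makes the discussion of loops of length $1$ unnecessary.
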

\begin{proof}
    In \cite[Definition 3.1]{DehnFillinginRHG}, combinatorial horoballs are defined with $2$--cells which are horizontal triangles, vertical squares, and pentagons. In \cite[Proposition 3.7]{DehnFillinginRHG}, they show this space is simply connected. Thus we need $2$--cells with circumference at most $5$ to make a combinatorial horoball simply connected.
\end{proof}

The following is a local to global criterion for showing a space is hyperbolic which makes use of coarse simple connectivity and local negative curvature.

\begin{theorem}[Coarse Cartan Hadamard]\thlabel{coarseCH}\cite[Theorem A.1]{CoarseCH} Let $\nu >0$ and $\sigma > 10^7\nu$. Let $Z$ be a geodesic metric space. If every ball of radius $\sigma$ in $Z$ is $\nu$--hyperbolic and $Z$ is $10^{-5}\sigma$ simply connected, then $Z$ is $300\nu$--hyperbolic.
\end{theorem}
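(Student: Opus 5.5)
The statement is the coarse Cartan--Hadamard theorem of \cite{CoarseCH}. I would prove it in the style of Gromov's local-to-global principle for hyperbolicity, working through isoperimetric inequalities rather than thin triangles. Write $D = 10^{-5}\sigma$, and measure the area of a loop by the number of loops of length at most $D$ needed to null-homotope it. This area is finite for every loop because $Z$ is $D$--simply connected.

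The first step is a local linear isoperimetric inequality. Every ball of radius $\sigma$ is $\nu$--hyperbolic, and $\sigma > 10^7\nu$. So a loop $c$ of length $\ell \le \sigma$ lies in such a ball, and I can fill it by the standard hyperbolic argument. One chooses a basepoint on $c$ and cones it to points spaced about $\nu$ apart along $c$ by geodesics, which stay in a slightly larger ball. One then cuts the resulting thin triangles into pieces of perimeter $O(\nu) \le D$. This gives $\mathrm{Area}(c) \le K\ell/\nu$ for a universal $K$, with the pieces short enough to count as $D$--loops. One subtlety has to be handled here: the geodesics used must stay inside a $\sigma$--ball. I would therefore only apply this to loops of length at most $\sigma/4$ or so, based at the center of the ball.

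The second step upgrades this to a global statement. For this I would use Papasoglu's or Bowditch's criterion: if every loop of length in a window $[L, C L]$ satisfies an isoperimetric inequality whose constant is sufficiently smaller than quadratic, then all loops satisfy a linear isoperimetric inequality. The usual mechanism takes a minimal-counterexample loop $c$ of length $\ell > \sigma/4$ and a $D$--filling of it. One then uses a pigeonhole/coarea argument to find a short chord cutting $c$ into two loops of length at most $\tfrac34\ell$, and inducts on the length. Coarse simple connectivity is exactly what guarantees that a filling by $D$--loops exists at all, so that the induction has something to work with. Once a global linear isoperimetric inequality holds, the standard implication gives hyperbolicity. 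That implication is thin triangles from the linear isoperimetric inequality, as in Bridson--Haefliger III.H.2. Its constant is linear in the isoperimetric constant times the cell size, and since the cell size is $D$ and the constant is $\sim 1/\nu$, this should yield $O(\nu)$--hyperbolicity.

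The main obstacle will be the second step, with explicit constants. The cutting argument has to produce a chord that is both short and whose pieces remain in the regime where the induction hypothesis applies. This is where the numerical gaps $\sigma > 10^7\nu$ and $D = 10^{-5}\sigma$ enter. If the isoperimetric route becomes unwieldy, I would fall back on the alternative route used by Delzant--Gromov and Coulon. That route shows directly that $\sigma$--local geodesics are global quasigeodesics with controlled constants, then deduces that geodesic triangles are $O(\nu)$--slim. The same key input would be needed there: $D$--simple connectivity is used to rule out a short local geodesic loop being essential, which is the coarse analogue of the simply connected hypothesis in the classical Cartan--Hadamard theorem. Either way, I would expect to aim for the final constant $300\nu$ only up to careful bookkeeping.
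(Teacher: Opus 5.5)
The paper does not prove this statement. It quotes it from \cite[Theorem A.1]{CoarseCH} and uses it as a black box (to show $\widehat{\Gamma}$ is hyperbolic), so your outline has to stand on its own. The overall architecture is a reasonable one: a local isoperimetric bound, a Bowditch-style local-to-global upgrade, then hyperbolicity. However, two steps fail as written.

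\textbf{The local estimate.} Coning $c$ from one basepoint to points spaced $\nu$ apart produces about $\ell/\nu$ thin triangles whose long sides have length up to $\ell/2$. Cutting each into pieces of perimeter $O(\nu)$ costs about $\ell/\nu$ pieces per triangle, so you get $\mathrm{Area}(c)\lesssim(\ell/\nu)^2$, not $K\ell/\nu$. A linear bound needs the Dehn-algorithm argument, as in Bridson--Haefliger III.H.2.7. Inside a $\nu$-hyperbolic ball a loop cannot be a $k$-local geodesic for $k$ a large multiple of $\nu$. So some subarc of length $k$ can be replaced by a geodesic that is shorter by a \emph{definite} amount. In a general geodesic space this needs a quantitative shortening lemma; strict decrease suffices only for graphs.

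The scale is also wrong. You should take $k$ just under $D/2$, so each replacement costs one $D$-loop and $\mathrm{Area}_D(c)\le C\ell/D$. Your bound $K\ell/\nu$ is worse by the factor $D/\nu=10^{-5}\sigma/\nu$, which is unbounded because only $\sigma>10^7\nu$ is assumed. A subquadratic criterion must compare $D$-area with $\varepsilon(\ell/D)^2$ for $\varepsilon$ below the Euclidean value. Then $K\ell/\nu\le\varepsilon(\ell/D)^2$ forces $\ell\gtrsim 10^{-10}K\sigma^2/(\varepsilon\nu)$, which leaves the local range $\ell\le\sigma/4$ once $\sigma/\nu$ is large.

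\textbf{The last step cannot produce $O(\nu)$.} A linear isoperimetric inequality for fillings by $D$-loops is blind below scale $D$. A flat Euclidean disc of diameter $D/10$ has linear $D$-area, since every loop in it is cut into boundedly many $D$-loops per unit length. Yet it is not $\nu$-hyperbolic for $\nu\ll D$. So the implication in Bridson--Haefliger III.H.2.9 gives at best a constant $\delta'$ of order $D=10^{-5}\sigma$, with no bound in terms of $\nu$. Your heuristic ``isoperimetric constant $\sim 1/\nu$ times cell size $D$'' is the dimensionless number $D/\nu\geq 100$, not a length of order $\nu$.

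The missing idea is a sharpening step that reuses the local hypothesis once you know $Z$ is $\delta'$-hyperbolic with $\delta'$ a small fraction of $\sigma$. Take a point $p$ on a side of a geodesic triangle. Enclose $p$ in a geodesic polygon with boundedly many sides, made of subsegments of the triangle's sides of length about $\sigma/10$ near $p$, closed up by connectors of length $O(\delta')$ placed far from $p$. This polygon lies in a single $\sigma$-ball, so its $\nu$-hyperbolicity puts $p$ within $O(\nu)$ of the other two sides. Only this step yields a constant like $300\nu$ independent of $\sigma$, and it is why the filling scale $10^{-5}\sigma$ must be tiny compared with $\sigma$. Your fallback via local quasigeodesics is only named, not argued, so it does not close this gap.
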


\subsection{Fillings and the Cohen Lyndon Property}

\begin{definition}\thlabel{Defn Dehn Filling}\cite[Introduction]{OsinFillings}
    Let $G$ be a group. For any subset $S \subset G$ let $\llangle S \rrangle$ be the normal closure of $S$ in $G$. Suppose that $\{H_\lambda\}_{\lambda \in \Lambda}$ is a family of subgroups of $G$ and $\{N_\lambda \triangleleft H_\lambda\}_{\lambda \in \Lambda}$ is a family of normal subgroups in each $H_\lambda$. We call $(G, \{H_\lambda\}_{\lambda \in \Lambda}, \{N_\lambda\}_{\lambda \in \Lambda})$ a \emph{group triple} and let
    \[N = \bigcup_{\lambda \in \Lambda}N_\lambda \quad \quad \overline{G} = G/ \llangle N \rrangle \quad \quad \overline{H}_\lambda = H_\lambda/ N_\lambda.\]
    If $\Lambda$ is finite, we call the group $\overline{G}$ a \emph{Dehn filling} of $G$ and the $N_\lambda$ are called \emph{filling kernels}.
\end{definition}

\begin{definition}
    Given a group $G$ with a finite collection of subgroups $\mathbb{P} = \{P_1,\ldots P_n\}$, we say that a property holds \emph{for all sufficiently long fillings} of $G$ if there is a finite set $F \subset G\setminus \{1\}$ so that whenever $\{N_1,\ldots ,N_n\}$ is a set of filling kernels with $N_i \cap F = \varnothing$ for each $i$, then $\overline{G}$ has the property. 
\end{definition}

The following is the fundamental theorem of relatively hyperbolic Dehn Fillings.

\begin{theorem} \cite[Theorem 1.1]{OsinFillings}
    Let $(G,\{P_i\}_i)$ be relatively hyperbolic. For any finite subset $S \subset G \setminus\{1\}$, the following holds for any sufficiently long filling $\varphi:G \longrightarrow G/K$:
    \begin{enumerate}
        \item for each $i$, $\varphi$ induces an embedding of $P_i/N_i \longrightarrow G/K$ whose image we identify with $P_i/N_i$,
        \item $(G/K, \{P_i/N_i\}_i)$ is relatively hyperbolic, and
        \item $\varphi$ restricted to $S$ is injective. 
    \end{enumerate}
\end{theorem}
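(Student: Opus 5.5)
The statement is Osin's theorem, which the paper cites from \cite{OsinFillings}. Rather than reproduce Osin's isolated-components argument, I would prove it geometrically, following Groves--Manning, using only the tools of this section: the cusped space, coarse simple connectivity, and \thref{coarseCH}.

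\textbf{Setup.} Let $X = X(G,\{P_i\})$ be a cusped space, $\delta$--hyperbolic by \thref{defn rh}, and let $K = \llangle \bigcup N_i \rrangle$. The candidate space for the filled group is built in three steps.
\begin{enumerate}
\item Truncate each horoball at a large depth $R$.
\item Take the quotient $X_R/K$.
\item Reattach, for each image coset $\bar g \bar P_i$, a combinatorial horoball on the induced graph $g P_i / N_i$.
\end{enumerate}
Call the result $\overline{X}$. It carries an action of $\overline{G} = G/K$, cocompact away from the new horoballs. To keep the horoball bookkeeping honest, I would first verify that a horoball is attached to $gP_i/N_i$ with stabilizer $P_i/N_i$. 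This uses that $N_i^g$ stabilizes exactly the horoball on $gP_i$, so that $K$ acts on the set of horoballs with controlled stabilizers.

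\textbf{Choice of $F$.} Fix $\sigma > 10^7 \nu$, with $\nu$ comparable to $\max(\delta, \theta_0)$. Choose $F$ to be the finite set of nontrivial elements of $\bigcup P_i$ that move some point of the depth-$R$ truncation of the horoball on $P_i$ by at most $10\sigma$. This set is finite because horizontal distances at depth $R$ are comparable to $2^{-R}$ times word length in $P_i$, by \thref{horoballs are hyperbolic}. Enlarge $F$ by the finitely many elements needed for part (3), namely all $s^{-1}t$ with $s,t \in S$, pulled back along the finitely many horoball shortcuts. The core claim is then the following.

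\begin{quote}
If $N_i \cap F = \varnothing$ for all $i$, then every $\sigma$--ball in $\overline{X}$ is isometric to a $\sigma$--ball either in $X$ or in a combinatorial horoball.
\end{quote}

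Both kinds of ball are $\nu$--hyperbolic by hypothesis and \thref{horoballs are hyperbolic}.

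\textbf{Main obstacle.} Proving the core claim is where I expect the real difficulty. It amounts to showing that $K$ moves every point of the truncated space by more than $2\sigma$, i.e.\ that the injectivity radius of the quotient is large. I would argue via van Kampen diagrams and induction on the number of $N$--relators, as in Groves--Manning. A shortest element $k \in K$ moving a point $x$ a small amount yields a short loop in $X/K$. This loop lifts to a path in $X$ which, by hyperbolicity and the choice of $R$, must enter some horoball deeply. There, the "long filling" hypothesis forces the corresponding $N_i$ element to be large, which contradicts shortness. The same analysis gives the two algebraic conclusions:
\begin{itemize}
\item $K \cap P_i = N_i$, so that $P_i/N_i$ embeds, which is part (1);
\item $K \cap F' = \varnothing$ for the finite set $F'$ encoding $S$, which is part (3).
\end{itemize}

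\textbf{Coarse simple connectivity.} $X$ is $16\delta$--simply connected by \thref{Hyperbolic spaces are coarsely simply connected}, and horoballs are $5$--simply connected by \thref{horoballs are simply connected}. Taking the quotient by $K$ adds only loops coming from $N_i$, and these become null-homotopic through the new horoballs using the $5$--cells. Hence $\overline{X}$ is $\max(16\delta, 5)$--simply connected, which is less than $10^{-5}\sigma$ once $\sigma$ is large.

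\textbf{Conclusion.} By \thref{coarseCH}, $\overline{X}$ is $300\nu$--hyperbolic. The deep parts of the new horoballs are coarse horoballs by \thref{finally write it down}. The action of $\overline{G}$ on their complement is cocompact, since the action of $G$ on the truncated space is, and the maximal parabolic subgroups are the conjugates of the $P_i/N_i$. So the action is cusp uniform, and \thref{equivalence of defns} gives part (2).
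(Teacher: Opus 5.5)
The paper does not prove this statement; it quotes it from Osin. Your outer framework (quotient of the truncated cusped space by $K$, reattached horoballs, coarse simple connectivity, then \thref{coarseCH}) is essentially the paper's metric criterion \thref{main metric lemma try2}, and conditional on your core claim it would go through.

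\textbf{The core claim is the gap.} It carries the whole content of the theorem, and you do not prove it. The mechanism you sketch does not work. For $k\in K\setminus\{1\}$ and $x$ in the thick part, the lift of the short loop is just a short path from $x$ to $kx$. Nothing about hyperbolicity of $X$ or the choice of $R$ forces it into a horoball. Saying that it ``must enter some horoball deeply'' is equivalent to what you are trying to prove.

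What you need is a Greendlinger-type lemma. It must say that for \emph{every} nontrivial $k\in K$ --- an arbitrary product of conjugates of elements of the $N_i$ --- the geodesic $[x,kx]$ penetrates deep into some horoball. Your hypothesis $N_i\cap F=\varnothing$ only controls single elements of the $N_i$; it is essentially the very translating condition \thref{geometrically CH}. Passing from these generators to all of $K$ is exactly what Osin's van Kampen diagram analysis over relative presentations accomplishes. The very rotating family theorem of Dahmani--Guirardel--Osin does the same. ``Van Kampen diagrams and induction on the number of relators'' points to that machinery but is not an argument. Moreover, inside this paper the large injectivity radius you want is only available as a \emph{consequence} of the statement you are proving: \thref{injective on balls centered on cayley graph} is derived from conclusion (3). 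So the paper's own tools cannot supply it without circularity.

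\textbf{The same gap affects (1) and (3).} The $K$--stabilizer of the horoball on $gP_i$ is $K\cap P_i^g$, which contains $N_i^g$, and equality is precisely (1). So attaching horoballs to ``$gP_i/N_i$'' and appealing to ``controlled stabilizers'' presupposes (1) instead of proving it. Large displacement of thick points does not by itself exclude a long element of $(P_i\cap K)\setminus N_i$. In the rotating-family approach this comes from the structure theorem for $K$: $K$ is the free product of conjugates of the $N_i$, and the $K$--stabilizer of each cone point is the corresponding conjugate of $N_i$. This is the Cohen--Lyndon property.

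For (3), putting the elements $s^{-1}t$ into $F$ only keeps them out of the $N_i$, not out of $K$. Conclusion (3) follows only once the displacement bound for $K$ exceeds $\max_{s,t\in S} d_X(s,t)$, so $R$ and $\sigma$ must also be chosen depending on $S$.

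A smaller point: as written, your $F$ is a union of $P_i$--conjugacy classes and is infinite in general. Fix a single basepoint instead and use normality of $N_i$ in $P_i$.
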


By identifying $G$ with the points of depth $0$ in the cusped space and using \thref{horoballs are hyperbolic} to control how the metric on $G$ warps in the cusped space, we can use the third conclusion of the previous theorem to get the following essential tool. 

\begin{lemma}\thlabel{injective on balls centered on cayley graph}\cite[Lemma 2.16]{BoundariesOfDehnFillings}
    Suppose $(G,\mathbb{P})$ is relatively hyperbolic, $Y$ is a Cayley graph for $G$, and $\widehat{Y}$ is a cusped space for $G$ built on $Y$. If $R$ is a positive integer, then for all sufficiently long fillings with kernel $K$, the quotient $p:\widehat{Y} \longrightarrow \widehat{Y}/K$ is injective on any ball of radius $R$ centered on a point in $Y$.
\end{lemma}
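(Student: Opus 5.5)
We argue as the paper indicates: pull back a finite set of group elements to a statement about balls in the cusped space. Fix $R$. Every point of $Y$ is a vertex or lies on an edge of the Cayley graph, so a ball of radius $R$ about a point of $Y$ lies in a ball of radius $R+1$ about a vertex $g\in G$. Since $G$ acts on $\widehat{Y}$ by isometries and $K$ is normal, we have $p(gx)=p(gy)$ iff $p(x)=p(y)$. Hence it suffices to find sufficiently long fillings for which $p$ is injective on $B=B_{\widehat{Y}}(1,R+1)$.

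\textbf{Step 1: reduce injectivity to avoiding a finite set.} Suppose $x,y\in B$ with $x\neq y$ and $p(x)=p(y)$. Then $y=kx$ for some $k\in K\setminus\{1\}$: if $x=y$ as points we are done, and otherwise some nontrivial $k$ moves $x$ to $y$. Choose a vertex $v$ within distance $1/2$ of $x$ (or nearly so, for points on edges). Then $kv$ is within that distance of $y$, so both $v$ and $kv$ lie in the ball $B'=B(1,R+2)$. The vertices of $\widehat{Y}$ have the form $(h,n)$, where $h\in G$ and $n$ is a depth in some horoball, and $k$ acts by $k(h,n)=(kh,n)$. Thus $v=(h,n)$ and $kv=(kh,n)$ with both in $B'$.

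\textbf{Step 2: control the group element.} From $d((h,n),1)\le R+2$, we get that $h$ is at depth-$0$ distance at most $2R+4$ from $1$, by travelling vertically. The same holds for $kh$. Hence $d_{\widehat{Y}}(h,kh)\le 4R+8$ in $\widehat{Y}$, but this is only a cusped-space distance.

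\textbf{Step 3: obtain an honest finite set.} Since $\widehat{Y}$ is locally finite only at bounded depth, the cusped-space balls are not finite in $G$, and this is the main obstacle. It is resolved by choosing representatives $h$ in the finite set $S_0=\{h\in G: d_{\Gamma}(1,h)\le C\}$, which is finite because the Cayley graph is locally finite. For this to work, the bound on $d_\Gamma(1,h)$ must be uniform. We get it by noting that $h$ ranges over vertices of $B'$ at depth $0$. These are vertices $g'\in G$ with $d_{\widehat{Y}}(1,g')\le R+2$, and paths of that length through horoballs give $d_\Gamma(1,g')\le R\,2^{R+2}$ by \thref{horoballs are hyperbolic}. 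So $h,kh\in S_0$ with $C=R\,2^{R+2}$. Therefore $k=(kh)h^{-1}\in F:=S_0S_0^{-1}\setminus\{1\}$, a finite set.

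\textbf{Step 4: conclude with Osin's theorem.} Apply the theorem of \cite{OsinFillings} with $S=F$. For sufficiently long fillings, $\varphi$ is injective on $F\cup\{1\}$, so no element of $F$ lies in $K$. This contradicts $k\in K\cap F$. Hence $p$ is injective on $B$ and, by equivariance, on every ball of radius $R$ centered on $Y$.
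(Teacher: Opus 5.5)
Your argument is correct and follows the route the paper indicates (the paper only sketches this lemma): identify $G$ with the depth-$0$ vertices, use the horoball distortion estimate to confine the relevant elements $k\in K$ to a finite set, and then apply the injectivity conclusion of Osin's theorem. There are two harmless slips. First, the depth-$0$ points $h,kh$ lie in $B(1,2R+4)$ (as your Step 2 shows) rather than in $B'$, so your Cayley-graph bound should be computed with $2R+4$ in place of $R+2$. Second, $\widehat{Y}$ is in fact locally finite, since a depth-$n$ vertex has only finitely many horizontal neighbours within $2^n$ in a locally finite graph; so finite balls are not really an obstacle, and you could take $F$ to be the nontrivial $k$ moving some vertex of $B'$ into $B'$, which is finite because the action on vertices is free.
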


It is helpful to know that the normal closure of a collection of filling kernels is the free product of the filling kernels. This definition was first considered in \cite{CohenLyndon} where it was called the \emph{free product of maximally many conjugates} property.

\begin{definition}[Cohen--Lyndon Property]\cite{CohenLyndon}
    A group triple $(G, \{H_\lambda\}_{\lambda \in \Lambda}, \{N_\lambda\}_{\lambda \in \Lambda})$ is called a \emph{Cohen--Lyndon triple} if there exist left transversals $T_\lambda$ of $H_\lambda \llangle N\rrangle$ so that the natural homomorphism
    \[{\Large * }_{\lambda \in \Lambda, t \in T_{\lambda}} tN_\lambda t^{-1} \longrightarrow \llangle N \rrangle\]
    is an isomorphism.
\end{definition}

In \cite[Theorem 2.27]{DGO}, the authors consider the more general setting of a group $G$ with a collection of \emph{hyperbolically embedded subgroups} $\mathbb{P}$ and show that long enough fillings of such a $(G,\mathbb{P})$ form a Cohen--Lyndon triple. Their approach is through \emph{very rotating families}, which are a geometric tool for proving this algebraic property. The following geometric condition is similar to these very rotating families.

\begin{definition}\thlabel{geometrically CH}(Very Translating Condition)\cite[$\S$4]{BoundariesOfDehnFillings}
    Suppose $(G,\{P_i\}_i)$ has a cusp uniform action on a $\delta$--hyperbolic metric graph $X$ and let $\sH_i$ be the horoball stabilized by $P_i$. A set of filling kernels $\{N_i \triangleleft P_i\}_i$ satisfy the \emph{very translating condition} if for each $n \in N_i \setminus \{1\}$ and $x \notin \sH_i$, we have $d(x,n\cdot x) \geq 10^4\delta$. We call the triple $(G,\{P_i\}_i, \{N_i\}_i)$ a \emph{very translating triple}.
\end{definition}

The following is immediate from \cite[Theorem 4.8]{BoundariesOfDehnFillings} and is stated for a single peripheral subgroup in \cite{Drilling}.

\begin{theorem}\cite[Theorem 11.2]{Drilling}\thlabel{very translating implies CH}
    If $(G,\{P_i\}_{i},\{N_i\}_i)$ is a very translating triple, then $(G,\{P_i\}_i,\{N_i\}_i)$ is a Cohen--Lyndon triple.
\end{theorem}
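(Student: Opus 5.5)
The plan is to deduce the Cohen--Lyndon property from the theory of very rotating families of \cite{DGO}, treating each horoball $g\sH_i$ as the ``apex'' of the rotation group $gN_ig^{-1}$. Let $K=\llangle \bigcup_i N_i\rrangle$. For each $g\in G$ the stabilizer of $g\sH_i$ is $gP_ig^{-1}$, and $gN_ig^{-1}$ depends only on the coset $gP_i$. So the family $\{gN_ig^{-1}\}$ is indexed $G$--equivariantly by the horoballs, and $K$ is generated by this family.

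The first step is to build a space on which these groups rotate. I would cone off each horoball: collapse $g\sH_i$ to a single apex $a_{g,i}$, or equivalently attach a cone of large radius $r$ to the horosphere $\partial(g\sH_i)$. Since the horoballs form a $G$--equivariant disjoint family of coarse horoballs in the $\delta$--hyperbolic space $X$, the coned space $\dot X$ is $\dot\delta$--hyperbolic, with $\dot\delta$ a bounded multiple of $\delta$. Quasiconvexity of horoballs gives a uniform bound here.

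The second step, which I expect to be the main obstacle, is checking the very rotating condition of \cite{DGO}. The condition to verify is that for every $n\in gN_ig^{-1}\setminus\{1\}$ and all $x,y$ with $20\dot\delta\le d(x,a_{g,i}),d(y,a_{g,i})\le 30\dot\delta$ and $d(x,ny)\le 2\dot\delta$ (or the variant used in \cite{DGO}), every geodesic from $x$ to $y$ passes through the apex. The idea is to translate this back to $X$. A point at bounded cone-distance from the apex corresponds to a point $x'$ of $X$ on or just outside $\partial(g\sH_i)$. The hypothesis $d_X(x',nx')\ge 10^4\delta$, combined with thin triangles and quasiconvexity of the horoball, should force any path from $x'$ to $ny'$ that avoids the horoball to have length much larger than the cone-off allows. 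So in $\dot X$ the only short routes run through $a_{g,i}$. The delicate part is bookkeeping the constants: $10^4\delta$ must dominate the (multiple of $\delta$) constants coming from coning off, and I would first try taking the cone radius comparable to $50\delta$ and tracking the constants directly. If the constants are too tight, the fallback is the windmill argument itself, carried out in $X$: use a local-to-global quasigeodesic argument to show a reduced word in the conjugates moves a point of the truncated space nontrivially.

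With the very rotating family in hand, \cite[Theorem 5.3]{DGO} (the structure theorem for groups generated by very rotating families) gives that $K$ is the free product $\ast_{a\in A}\mathrm{Stab}_{K\text{-rot}}(a)$, where $A$ is a set of representatives of the $K$--orbits of apices. The final step is then algebraic. The apices of type $i$ are in bijection with $G/P_i$, and since $K$ is normal, the $K$--orbits correspond to $K\backslash G/P_i=G/(P_iK)$. Choosing a left transversal $T_i$ of $P_iK$ picks the apices $t\sH_i$ for $t\in T_i$, whose rotation groups are $tN_it^{-1}$. The natural map $\ast_{i,\,t\in T_i} tN_it^{-1}\to K$ is therefore an isomorphism, which is exactly the Cohen--Lyndon property.
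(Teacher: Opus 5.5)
The paper gives no argument of its own here. It quotes the statement as immediate from \cite[Theorem 4.8]{BoundariesOfDehnFillings} (stated for one peripheral subgroup as \cite[Theorem 11.2]{Drilling}), whose hypothesis is exactly this translation condition with exactly the constant $10^4\delta$, so no cone-off and no matching of constants ever comes up. Your reduction to the rotating-family theorem of \cite{DGO} is a different route, and it has a genuine gap at the step you flagged; as you set it up, that step fails.

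The \cite{DGO} theorem needs two things, measured against the hyperbolicity constant $\dot\delta$ of the coned space. First, the apices must be $200\dot\delta$--separated. Second, every geodesic from a point $x$ with $20\dot\delta\le d(x,a)\le 30\dot\delta$ to a point within $15\dot\delta$ of $nx$ must pass through $a$. None of your constructions delivers this. Note also that collapsing $g\sH_i$ to a point is not equivalent to attaching a cone of radius $r$.

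\begin{itemize}
\item If you collapse whole horoballs, the separation of apices is the distance between horoballs. Nothing in the hypothesis as stated bounds that from below; in a cusped space, horoballs on adjacent cosets are at distance $1$.
\item If you attach \cite{DGO}-style hyperbolic cones over the horosphere, with the metric induced from $X$, an element moving base points a distance $D$ rotates the cone by an angle of roughly $D/\sinh r$. The rotation condition then needs $D\gtrsim \pi\sinh r$, and separation needs $r\gtrsim 100\dot\delta$. That is a translation exponential in $\dot\delta$, whereas you only have the linear bound $10^4\delta$.
\item If you attach electrified (spiky) cones of radius $r$, the midpoint of a base geodesic of length $2r$ is at distance $r$ from both spokes, so $\dot\delta\ge r$. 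The annulus $20\dot\delta\le d(\cdot,a)\le 30\dot\delta$ then lies outside the cone, and the $200\dot\delta$ separation would again have to come from the uncontrolled distance between horoballs.
\end{itemize}

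In particular, ``cone radius about $50\delta$, with $\dot\delta$ a bounded multiple of $\delta$'' does not produce a separated very rotating family.

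A version that might work is to collapse only the part of each horoball beyond some depth $L$. This spends part of the $10^4\delta$ of translation to buy separation $2L\ge 200\dot\delta$. But you would then need two things you have not supplied:
\begin{itemize}
\item an explicit bound $\dot\delta\le c\,\delta$ for the partially collapsed space, with $c$ small enough that $200\dot\delta$ and the annulus constants fit inside $10^4\delta$;
\item an actual proof that geodesics from $x$ to points near $nx$ cannot shortcut through other collapsed horoballs.
\end{itemize}
Because the hypothesis fixes the constant, this quantitative verification is the whole content of the statement, and ``should force'' does not provide it.

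Your fallback, running the windmill argument directly with horoballs in place of apices, is the kind of argument suited to this hypothesis. But it amounts to reproving the cited theorem, and you only name it. The final algebraic step is fine: matching $K$--orbit representatives of type-$i$ apices with a left transversal of $P_iK$, using normality of $K$.
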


Just as a finite index subgroup of a relatively hyperbolic group inherits a peripheral structure from the larger group, any Dehn filling of the larger group induces a Dehn filling of the subgroup. The next definition is central to this work.

\begin{definition}[Induced Filling]\thlabel{induced filling}
    Let $(G,\{P_i\}_i)$ be a relatively hyperbolic group with $H<G$ a finite index subgroup. Choose subgroups $P_j' = H \cap P_{i(j)}^{g_j}$ so that $\{P_j'\}_j$ is the induced peripheral structure for $H$. If $\{N_i \triangleleft P_i\}_i$ is a set of filling kernels for $G$, the \emph{induced filling kernels for $H$} are the subgroups $N_j' = H \cap N_{i(j)}^{g_j} \triangleleft P_j'$. The \emph{induced filling of $H$ by $\{N_i\}_i$} is the quotient $H/ \llangle N_j'\rrangle_H$. We call $(H,\{P_j'\}_j,\{N_j'\}_j)$ the \emph{induced group triple for $H$}. The \emph{filled parabolics of $H$} are the groups $P_j'/N_j'$.
\end{definition}

If a group $G$ acts on a metric space, then so does each of its subgroups, and the restriction of this action to subgroups inherits many of the properties of the larger action. This observation is the main strategy of this paper, and the next result is one of our first examples of this.

\begin{corollary}\thlabel{finite index subgroup of geom CH}
    If $(G,\{P_i\}_i,\{N_i\}_i)$ is very translating group triple and $H$ is a finite index subgroup of $G$, then the induced group triple for $H$ is a Cohen--Lyndon triple.
\end{corollary}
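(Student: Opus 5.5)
The plan is to show that the induced triple for $H$ is itself very translating for the restricted action, and then apply \thref{very translating implies CH}.

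Let $(G,\{P_i\}_i)$ act cusp uniformly on the $\delta$--hyperbolic graph $X$ with horoballs $\sH_i$ stabilized by $P_i$, so that the very translating condition holds there. By the proof of \thref{fin index subgroup is rel hyp}, the restriction of this action to $H$ is a cusp uniform action of $(H,\{P_j'\}_j)$ on the same space $X$. The $G$--equivariant family of horoballs is in particular $H$--equivariant, and its union and truncated space are unchanged. The maximal parabolic subgroups of $H$ are the infinite groups $H\cap P_i^{g}$. Since $P_j' = H\cap P_{i(j)}^{g_j}$ stabilizes the horoball $g_j\sH_{i(j)}$, that is the horoball attached to $P_j'$ in this action.

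Next I check the very translating condition for the induced kernels $N_j' = H\cap N_{i(j)}^{g_j}$. Take $n'\in N_j'\setminus\{1\}$ and $x\notin g_j\sH_{i(j)}$. Write $n' = g_j n g_j^{-1}$ with $n\in N_{i(j)}\setminus\{1\}$. Then $g_j^{-1}x\notin \sH_{i(j)}$, and
\[ d(x,n'x) = d(g_j^{-1}x, n g_j^{-1}x)\geq 10^4\delta, \]
using that $G$ acts by isometries. The hyperbolicity constant $\delta$ is unchanged because the space is the same. So $(H,\{P_j'\}_j,\{N_j'\}_j)$ is a very translating triple, and \thref{very translating implies CH} shows it is a Cohen--Lyndon triple.

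The only delicate point is bookkeeping. The very translating condition refers to ``the horoball stabilized by $P_j'$'', so I have to confirm that each induced peripheral subgroup, being infinite, fixes a unique parabolic point, and therefore that this horoball is exactly $g_j\sH_{i(j)}$ rather than some other member of the family. This follows because distinct horoballs are centered at distinct parabolic points, and an infinite parabolic subgroup of $P_{i(j)}^{g_j}$ fixes only the point fixed by $P_{i(j)}^{g_j}$. A different choice of $H$--conjugacy representatives is harmless, since conjugating by elements of $H$ preserves all the conditions.
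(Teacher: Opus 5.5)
Your proposal is correct and follows the same route as the paper: restrict the very translating action of $G$ to $H$, observe that the induced triple is again very translating on the same space with the same horoballs and the same $\delta$, and then invoke the theorem that very translating triples are Cohen--Lyndon. The paper gives this in one line; your computation $d(x,n'x)=d(g_j^{-1}x,\,n g_j^{-1}x)$ and your check that $P_j'$ stabilizes exactly $g_j\sH_{i(j)}$ are the details it leaves implicit.
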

\begin{proof}
    The definition of a very translating group triple \thref{geometrically CH} includes an action of the triple $(G,\{P_i\}_i,\{N_i\}_i)$ on a metric space with certain properties. The induced group triple for $H$ acts on the same space and inherits all the necessary metric properties from $(G,\{P_i\}_i,\{N_i\}_i)$ to be a very translating group triple, so the result follows from \thref{very translating implies CH}.
\end{proof}

We collect a few more results on induced fillings which we use in later sections. We state these where the larger group has a single peripheral subgroup for notational convenience and because it matches the situation of a generalized drilling (see \thref{defn generalized}) but the same proof holds for multiple peripheral subgroups. 

\begin{lemma}\thlabel{induced filling for normal subgroups}
    Suppose $(G,P)$ is relatively hyperbolic, $N \triangleleft P$ is a filling kernel, $\overline{P} = P/N$, $K = \llangle N \rrangle _G$ and $\overline{G}= G/K$. Fix a finite index subgroup $H$ of $G$ and let $(H,\{P_i\}_i,\{N_i\}_i)$ be the induced group triple for $H$. Let $\overline{P}_i = P_i/N_i$, $K_N = \llangle N_i \rrangle_{H}$, and $\overline{H} = H/K_N$.
    \begin{enumerate}
        \item If $H$ is normal in $G$, then we can consider either $N$ or $H\cap N$ as a filling kernel for $G$. Both of these fillings induce the same filling of $H$.
        \item Suppose the natural quotient $G \longrightarrow \overline{G}$ induces an embedding of $\overline{P}$ into $\overline{G}$. Then the $\{\overline{P}_i\}_i$ embed in $\overline{H}$ and embed in $\overline{G}$ as finite index subgroups of $\overline{P}$. If $\overline{P}$ is hyperbolic, then so is each filled parabolic of $H$.
        \item If $(G,P,N)$ is a generalized drilling of $\overline{G}$ along some $Q < G$, then the $\{\overline{P}_i\}_i$ embed in $\overline{H}$ and embed in $\overline{G}$, and each $\overline{P}_i$ is hyperbolic. If $Q \cong\ZZ$, then $\overline{P}_i \cong \ZZ$ for each $i$.
    \end{enumerate}
\end{lemma}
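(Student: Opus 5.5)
We argue the three parts in order; each reduces to elementary manipulations of conjugates of $N$ and $P$ inside $G$. Throughout, the induced triple for $H$ is $P_j = H\cap P^{g_j}$ and $N_j = H \cap N^{g_j}$, where the $g_j$ range over representatives of the double cosets $H\backslash G/P$ (only those with $H\cap P^{g_j}$ infinite; since $H$ has finite index, every such intersection is infinite when $P$ is infinite).

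For part (1), assume $H \triangleleft G$. Then $H\cap N$ is normal in $P$: it is the intersection of two subgroups of $P$ that are normalized by $P$. So $H\cap N$ is a legitimate filling kernel for $G$. The filling induced by $H\cap N$ has kernels $H\cap (H\cap N)^{g_j} = H \cap H^{g_j}\cap N^{g_j}$. Normality gives $H^{g_j}=H$, so this equals $H\cap N^{g_j}$, which is exactly the kernel induced by $N$. The induced peripheral structures agree as well, since they depend only on $P$. Hence the two induced fillings $H/\llangle N_j\rrangle_H$ coincide.

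For part (2), first note $\llangle N_j\rrangle_H \le K\cap H$, because each $N_j \le N^{g_j}\le K$. Consider the composite $P_j \to H \to \overline H \to \overline G$. Its kernel is $P_j\cap K$, and this equals $H\cap (P\cap K)^{g_j}$. By the embedding hypothesis $P\cap K = N$, so the kernel is $H\cap N^{g_j} = N_j$. The map $P_j\to\overline H$ has kernel containing $N_j$, and that kernel is contained in the kernel of the composite, which is $N_j$. Therefore $\overline P_j = P_j/N_j$ injects into both $\overline H$ and $\overline G$.

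The image of $\overline P_j$ in $\overline G$ is the image of $H\cap P^{g_j}$. This is a finite index subgroup of the image of $P^{g_j}$, which is a conjugate of $\overline P \cong P/N$. Conjugating back by $g_j^{-1}$ (or identifying images up to conjugacy), $\overline P_j$ is isomorphic to a finite index subgroup of $\overline P$. Finite index subgroups of hyperbolic groups are hyperbolic, being quasi-isometric to the ambient group. So hyperbolicity of $\overline P$ passes to each $\overline P_j$.

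For part (3), the definition of a generalized drilling identifies $P/N$ with $Q$ through the quotient map. This is precisely the hypothesis of (2) with $\overline P \cong Q$. Because $Q$ is quasiconvex in the hyperbolic group $\overline G$, it is hyperbolic by the cited theorem, so (2) yields the embeddings and hyperbolicity. If $Q\cong\ZZ$, each $\overline P_j$ is a finite index subgroup of $\ZZ$; it is nontrivial because finite index in an infinite group, hence isomorphic to $\ZZ$.

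The only delicate step is the kernel computation $P_j\cap K = N_j$ in part (2). The key point is that the embedding hypothesis means exactly $P\cap K=N$, and that $K$ is normal in $G$, so $P^{g}\cap K = (P\cap K)^g$.
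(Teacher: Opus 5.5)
Your proof is correct and follows the paper's argument essentially step for step. Part (1) uses $H\cap(H\cap N)^{g_j}=H\cap H^{g_j}\cap N^{g_j}=H\cap N^{g_j}$; part (2) rewrites the embedding hypothesis as $P\cap K=N$ and sandwiches $P_j\cap K_N$ between $N_j$ and $P_j\cap K=N_j$; part (3) is the special case of (2), and your explicit appeal to quasiconvexity of $Q$ for hyperbolicity of $\overline{P}$ fills in a step the paper leaves implicit.
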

\begin{proof}
    As in \thref{induced peripheral}, the induced peripheral structure for $H$ involves a choice of elements $g_i \in G$ so that $P_i := H \cap P^{g_i}$ and $\{P_i\}_i$ is a collection of $H$--conjugacy class representatives of subgroups in $\{H \cap P^g\;|\; g \in G\}$. By \thref{induced filling}, we have $N_i:= H \cap N^{g_i}$, and the filling kernel for $H$ induced by $N$ is $K_N:= \llangle N_i \rrangle_H$.
    
    To prove $(1)$, suppose $H$ is normal in $G$. Then $H\cap N$ is normal in $P$, so we can indeed consider $H\cap N$ as a filling kernel for $G$. Again using \thref{induced filling}, the induced filling of $H$ by $H \cap N$ is the quotient of $H$ by $K_{H \cap N} := \llangle H \cap (H\cap N)^{g_i}\rrangle_H$. Note that $H\cap (H\cap N)^{g_i} \leq H\cap N^{g_i}$, and $K_N$ is the normal closure in $H$ of the possibly larger subgroups $H\cap N^{g_i}$, so even without the assumption of normality on $H$ we have $K_{H\cap N} \leq K_N$. When $H$ is normal, we have the reverse inclusion
    \[H \cap (H\cap N)^{g_i} = H \cap H^{g_i} \cap N^{g_i} = H \cap N^{g_i},\]
    which implies $K_N = K_{H\cap N}$. The filling kernels coincide, so the fillings of $H$ are the same. This proves $(1)$.

    For $(2)$, observe that the image of $P$ in $\overline{G}$ is isomorphic to $P /P\cap K$, so the assumption that $\overline{P}$ embeds in $\overline{G}$ is equivalent to $P \cap K = N$. For any $g \in G$, we can conjugate this equation and use the normality of $K$ to see that $P^g \cap K = N^g$. Because $K_H$ is the normal closure in $H$ of some subgroups of conjugates of $N$, $K_H < K$. As with $G$ and $P$, the quotient $H \longrightarrow \overline{H}$ induces the quotient $P_i \longrightarrow \overline{P}_i$ if and only if $P_i \cap K_H = N_i$. To show that indeed $P_i \cap K_H = N_i$ for any $i$, suppose $p \in P_i \cap K_H$. Then $p \in P_i <P^{g_i}$, $p \in K_H<K$, and clearly $p \in H$, so 
    \[p \in P^{g_i} \cap K \cap H = N_i \cap H =:N,\]
    where the first equality uses that $N = P \cap K$. Thus the $\overline{P}_i$ embed in $\overline{H}$. To see they are isomorphic to finite index subgroups of $\overline{P}$, observe that $P_i$ is finite index in $P^{g_i}$ because $H$ is finite index in $G$. The image of $P_i$ in $\overline{G}$ is isomorphic to $P_i / P_i \cap K$, and we claim $P_i\cap K = N_i$. Using the definition of $P_i$ and $N_i$ together with $P \cap K = N$, we have 
    \[P_i \cap K = H \cap P^{g_i} \cap K = H \cap N^{g_i} = N_i\]
    Thus $\overline{P}_i$ also embeds into $\overline{G}$ as a finite index subgroup of (a conjugate of) $\overline{P}$. If in addition $\overline{P}$ is hyperbolic, then each $\overline{P}_i$ must be hyperbolic because it is a finite index subgroup of a hyperbolic group.

    For $(3)$, note that by \thref{defn generalized}, a generalized drilling includes an identification of $\overline{P}$ with $Q$ and the quotient $G \longrightarrow \overline{G}$ induces this identification. Thus $(3)$ is a special case of $(2)$, and the last part follows from the fact that the only finite index subgroups of $\ZZ$ are isomorphic to $\ZZ$.
\end{proof}

\subsection{$PD(n)$ Groups}\label{PDn stuff}

If $G$ is the fundamental group of a closed aspherical $n$--manifold $M$, then $M$ is a $K(G,1)$. Because $M$ is a manifold, its cohomology -- and thus the cohomology of $G$ -- satisfies a Poincar\'e duality relationship. $PD(n)$ groups are groups whose cohomology satisfies a similar duality relationship:

\begin{definition}\cite{JohnsonWall}
    A group $G$ is $PD(n)$, or \emph{Poincar\'e duality group of dimension $n$}, if
    \begin{enumerate}
        \item $G$ is type $FP$,
        \item $H^n(G,\ZZ G) \cong \ZZ$, and 
        \item $H^i(G,\mathbb{Z}G) = 0$ for $i \neq n$.
    \end{enumerate}
\end{definition}

The theory of duality in groups is deep and complex, but we do not deal with any specifics of group cohomology or related topics in this work. The interested reader can refer to \cite{CohomGroupsBrown}. For hyperbolic groups, Bestvina and Mess have shown that, for hyperbolic groups, being a $PD(n)$ group puts strong conditions on the boundary:

\begin{theorem}
    \thlabel{bestvina boundary calculator}\cite[Corollary 1.3 (c)]{BestvinaMess}\cite[Theorem 2.8]{LocalHomOfBoundaries}
    If $G$ is a hyperbolic group, then $G$ is a $PD(n)$ group if and only if $G$ is torsion free and $\partial G$ is an integral \v{C}ech cohomology sphere of dimension $n-1$.
\end{theorem}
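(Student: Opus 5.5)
This theorem is cited from Bestvina--Mess and Bartels--Lück--Weinberger rather than proved in the paper, so the plan below only reconstructs the standard argument. The engine is the identity
\[H^i(G,\ZZ G) \cong \check{H}^{i-1}(\partial G;\ZZ)\]
(reduced \v{C}ech cohomology), valid for hyperbolic $G$ that are virtually torsion free or of type $F$. Both directions of the equivalence are then read off this identity once the group has a finite $K(G,1)$.

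To establish the identity, I would use the Rips complex. For torsion free hyperbolic $G$, Rips showed that $P_d(G)$ is contractible for $d$ large, giving a finite $K(G,1)$. Bestvina--Mess then showed that $\overline{X} = P_d(G)\cup \partial G$ is a compact $Z$--set compactification, meaning $\partial G$ is a $Z$--set in a compact AR. Since $G$ acts freely and cocompactly on $X=P_d(G)$, we have
\[H^i(G,\ZZ G)\cong H^i_c(X).\]
The $Z$--set property, together with the long exact sequence of the pair $(\overline{X},\partial G)$ and acyclicity of $\overline{X}$, gives
\[H^i_c(X)\cong \check{H}^{i-1}(\partial G).\]

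For the forward direction, suppose $G$ is $PD(n)$. Then $G$ has finite cohomological dimension, hence is torsion free. Rips complexes give type $F$, and the identity yields $\check{H}^{n-1}(\partial G)\cong \ZZ$ with all other reduced groups zero. Thus $\partial G$ is an integral \v{C}ech cohomology $(n-1)$--sphere.

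For the reverse direction, torsion freeness gives a finite $K(G,1)$ via Rips, so $G$ is type $F$ and in particular type $FP$. The cohomology sphere hypothesis, transferred through the identity, gives $H^i(G,\ZZ G)\cong\ZZ$ exactly when $i=n$ and $0$ otherwise. This is the definition of $PD(n)$.

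The main obstacle is verifying the $Z$--set property and the compactification: showing that $P_d(G)\cup\partial G$ is an absolute retract in which $\partial G$ can be instantly pushed off. This step is where the real geometry of the hyperbolic group is used. Everything else is formal homological algebra.
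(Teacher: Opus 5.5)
Your sketch is correct and follows the standard argument behind the cited result. The paper gives no proof of its own, deferring to Bestvina--Mess, where $H^i(G;\ZZ G)\cong H^i_c(P_d(G))\cong \check{H}^{i-1}(\partial G)$ comes from the $Z$--set compactification of the Rips complex and the $PD(n)$ equivalence is read off as you do: finite cohomological dimension forces torsion freeness, and Rips gives type $F$. The one step you compress is $H^i_c(X)\cong\check{H}^i(\overline{X},\partial G)$, the standard identification of compactly supported cohomology of an open subset of a compact space; what you actually need from Bestvina--Mess there is that $\overline{X}$ is an AR, which gives the acyclicity used in the long exact sequence.
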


In particular, Bestivna showed the following about hyperbolic $PD(3)$ groups. This covers the case $n=3$, which is our focus.

\begin{theorem}\thlabel{bestvina boundary calculator n=3}\cite[Remark 2.9]{LocalHomOfBoundaries}
If $G$ is a hyperbolic $PD(n)$ group for $n \leq 3$, then $\partial G \cong S^{n-1}$.
\end{theorem}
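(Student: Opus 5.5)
By \thref{bestvina boundary calculator}, $G$ is torsion free and $\partial G$ is an integral \v{C}ech cohomology sphere of dimension $n-1$. So the task is to upgrade ``cohomology sphere'' to ``genuine sphere'' in low dimensions. The plan has three parts: first show that $\partial G$ is a homology manifold of dimension $n-1$, then invoke the classical fact that homology manifolds of dimension at most $2$ are topological manifolds, and finally identify the resulting manifold from its cohomology.

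\textbf{Step 1 (the main obstacle): $\partial G$ is a homology $(n-1)$--manifold.} Let $X$ be a Rips complex for $G$ with large parameter. Then $\overline{X} = X \cup \partial G$ is a compact metrizable absolute retract in which $\partial G$ is a $Z$--set, by Bestvina--Mess. Since $X$ is contractible with a proper cocompact $G$--action, we have
\[ H^i(G,\ZZ G) \cong H^i_c(X) \cong \check{H}^{i-1}(\partial G). \]
Bestvina--Mess also give $\dim \partial G = \operatorname{cd} G - 1 = n-1$.

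To get the local statement, I would run the same $Z$--set argument with neighbourhoods $U$ of a point $\xi \in \partial G$ in $\overline{X}$ in place of all of $\overline{X}$. I would then use the $PD(n)$ hypothesis, which says $X$ behaves like an acyclic homology $n$--manifold at large scale, to compute the local homology of $\partial G$ at $\xi$. The expected answer is $\ZZ$ in degree $n-1$ and $0$ otherwise. Establishing this local duality, uniformly in $\xi$, is the step I expect to be hardest; it is essentially the content of \cite{LocalHomOfBoundaries}.

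Local connectedness of $\partial G$ is also needed. For $n \geq 2$ it follows because $G$ is one-ended, since $H^1(G,\ZZ G)=0$, and one-ended hyperbolic groups have locally connected boundary by Bowditch and Swarup.

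\textbf{Step 2 (low-dimensional recognition).} The case $n=1$ needs no manifold theory: a compact \v{C}ech cohomology $0$--sphere has $\check{H}^0 \cong \ZZ^2$, so it has exactly two points, hence $\partial G \cong S^0$. For $n = 2, 3$, I would apply Wilder's classical theorem that a locally compact homology manifold of dimension at most $2$ is a topological manifold. This makes $\partial G$ a closed topological $(n-1)$--manifold.

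\textbf{Step 3 (identifying the manifold).} For $n=2$, $\partial G$ is a closed $1$--manifold with $\check{H}^0 = \ZZ$. It is therefore connected, so $\partial G \cong S^1$. For $n=3$, $\partial G$ is a closed surface with $\check{H}^0 = \ZZ$, $\check{H}^1=0$ and $\check{H}^2=\ZZ$. The condition $\check{H}^1 = 0$ excludes every closed surface other than $S^2$ and $\mathbb{RP}^2$, and $\check{H}^2 = \ZZ$ excludes $\mathbb{RP}^2$, since $H^2(\mathbb{RP}^2;\ZZ) = \ZZ/2$. Hence $\partial G \cong S^2$.
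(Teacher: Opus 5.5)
Your proposal is correct and follows essentially the same route as the source the paper cites; the paper gives no argument beyond \cite[Remark 2.9]{LocalHomOfBoundaries}, which combines Bestvina's theorem that the boundary of a $PD(n)$ group is a homology $(n-1)$--manifold with the \v{C}ech cohomology of $S^{n-1}$ with Wilder's theorem that homology manifolds of dimension at most $2$ are manifolds, and then identifies the resulting closed $1$-- or $2$--manifold. One small fix: for $n=1$, $\check{H}^0(\partial G)\cong\ZZ^2$ alone only shows that $\partial G$ has two components, so you need $\dim\partial G = 0$ from your Step 1 to conclude that it consists of exactly two points.
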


If $M$ is a manifold with boundary, then the relative cohomology of the pair $(M,\partial M)$ satisfies \emph{Lefschetz} duality. For such an $M$, we can take two copies of $M$ and identify components of $\partial M$ to build a manifold without boundary called the \emph{double} of $M$. To generalize this to groups, $(M,\partial M)$ is generalized to the notion of a $PD(n)$ pair, and the following definition is an algebraic analog of constructing the double of $M$.

\begin{definition}
    Suppose $G$ is a group and $\mathbb{P}$ is a finite collection of subgroups. Let $G,G'$ be isomorphic copies of $G$ with an isomorphism $\varphi:G \longrightarrow G'$. Let $\mathscr{G}$ be a graph of groups with $2$ vertices $v_G,v_{G'}$ whose vertex groups are $G$ and $G'$, and with one edge $e_P$ between this pair of vertices for each $P \in \mathbb{P}$. Each $e_P$ has edge group $P$ and includes into $G,G'$ using the identity and $\varphi$ respectively. Then $\pi_1(\mathscr{G})$ is the \emph{double of $(G,\mathbb{P})$ along $\varphi$}.  
\end{definition}

\begin{definition}\cite[Corollary 8.5]{BieriEckmann}
    If $G$ is a group and $\mathbb{P}$ is a finite collection of subgroups, then $(G,\mathbb{P})$ is a $PD(n)$ pair if the double of $(G,\mathbb{P})$ along the identity is a $PD(n)$ group.     
\end{definition}

The following result of Tsushiku and Walsh is the analog of \thref{bestvina boundary calculator n=3} for relatively hyperbolic groups.

\begin{theorem}\thlabel{TWPD3}\cite[Corollary 3, Theorem 4]{TsushikuWalsh}
    Let $(G,\mathbb{P})$ be a relatively hyperbolic group. Then $(G,\mathbb{P})$ is a $PD(3)$ pair if and only if $G$ is torsion free and $\partial_{\mathbb{P}}G \cong S^2$.
\end{theorem}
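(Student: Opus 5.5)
The plan is to reduce both directions to a relative version of the Bestvina--Mess argument behind \thref{bestvina boundary calculator}. The bridge between the group and its boundary is a suitable compactification of a contractible model for $G$ built from a cusped space. By definition, $(G,\mathbb{P})$ is a $PD(3)$ pair when the double of $(G,\mathbb{P})$ along the identity is a $PD(3)$ group. By \cite[Corollary 8.5]{BieriEckmann}, this is equivalent to the following relative Lefschetz duality conditions:
\begin{enumerate}
    \item each $P\in\mathbb{P}$ is a $PD(2)$ group;
    \item $(G,\mathbb{P})$ is of type $FP$;
    \item $H^i(G,\mathbb{P};\ZZ G)$ is $\ZZ$ for $i=3$ and $0$ otherwise.
\end{enumerate}
So the work is to translate (3), and (1), into statements about $\partial_{\mathbb{P}}G$.

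\textbf{Step 1: a contractible model and its compactification.} Take a cusped space $X(G,\mathbb{P})$ as in \thref{defn cusped space}, which is $\delta$--hyperbolic by \thref{defn rh}, and replace it by a Rips complex $R$ of large parameter. The group acts on $R$ cocompactly away from the horoballs. Let $R_0$ be the truncated part, namely $R$ minus the deep horoballs, with horospherical boundary pieces $\partial_i R_0$ stabilized by conjugates of peripheral subgroups. The compactification $\overline{R}=R\cup\partial_{\mathbb{P}}G$ should be a compact contractible ENR-like space in which $\partial_{\mathbb{P}}G$ is a $Z$--set, exactly as in Bestvina--Mess. Assume the peripherals have finite classifying spaces; in the forward direction this holds because they are $PD(2)$, and in the reverse direction we arrange it in Step 4. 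Then compactly supported cohomology computes group cohomology with group ring coefficients:
\[H^i(G,\mathbb{P};\ZZ G)\cong H^i_c(R_0,\partial R_0)\cong H^i_c(R)\cong \check{H}^{i-1}(\partial_{\mathbb{P}}G),\]
using reduced \v{C}ech cohomology in the last term. The middle isomorphism holds because the horoballs deformation retract onto their horospheres relative to infinity. The last isomorphism uses the $Z$--set property and contractibility of $\overline{R}$.

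\textbf{Step 2: forward direction.} Suppose $(G,\mathbb{P})$ is a $PD(3)$ pair. Then $G$ embeds in the $PD(3)$ double, which is torsion free, so $G$ is torsion free. By Step 1, $\partial_{\mathbb{P}}G$ is a \v{C}ech cohomology $2$--sphere. To upgrade this to $S^2$, I would follow Bestvina's local homology argument \cite{LocalHomOfBoundaries}, using duality locally at each boundary point. At conical limit points, the argument runs through translates of compact pieces of $R_0$, as in the hyperbolic case. At parabolic points, the $PD(2)$ peripheral $P$ acts properly and cocompactly on $\partial_{\mathbb{P}}G$ minus the fixed point, and this gives the local homology of a plane there. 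This shows $\partial_{\mathbb{P}}G$ is a $2$--dimensional homology manifold. In dimension $2$ such a space is a genuine manifold, and a \v{C}ech cohomology $2$--sphere that is a $2$--manifold is $S^2$.

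\textbf{Step 3: reverse direction.} Suppose $G$ is torsion free and $\partial_{\mathbb{P}}G\cong S^2$. Each maximal parabolic $P$ fixes a point $p$ and acts properly and cocompactly on $S^2\setminus\{p\}\cong\mathbb{R}^2$. Since $P$ is torsion free, it is a surface group, hence $PD(2)$ and of type $F$. This gives condition (1) and justifies the finiteness used in Step 1. Then $(G,\mathbb{P})$ is of type $FP$, by gluing a cocompact model for the truncated part to $K(P,1)$'s. The isomorphism of Step 1 yields the cohomology condition (3), so by \cite{BieriEckmann} $(G,\mathbb{P})$ is a $PD(3)$ pair.

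\textbf{Main obstacle.} I expect the hard step to be Step 1 near the cusps. One must show that $\partial_{\mathbb{P}}G$ is a $Z$--set in a contractible compactum, and that removing the horoballs does not disturb compactly supported cohomology. Parabolic points are not approached conically, so the Bestvina--Mess estimates must be redone using the exponential metric distortion of combinatorial horoballs (\thref{horoballs are hyperbolic}) and their coarse simple connectivity (\thref{horoballs are simply connected}). I would first try to prove a uniform "visual" contraction: any small neighborhood of a boundary point in $\overline{R}$ contracts inside a slightly larger one, handling the parabolic points by pushing radially down the horoball.
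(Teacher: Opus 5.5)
The paper gives no proof of this statement; it quotes it from \cite{TsushikuWalsh}, so your proposal has to stand on its own. Your Step~1 is in substance the theorem of Manning--Wang \cite{S2boundaryMW} that the paper mentions right afterwards: $H^i(G,\mathbb{P};\ZZ G)\cong\check{H}^{i-1}(\partial_{\mathbb{P}}G)$ when $G$ has type $F$. You should cite it rather than reprove it, because the obstacles you flag are real:
\begin{itemize}
\item the truncated Rips complex does not obviously give an acyclic pair $(R_0,\partial R_0)$ with acyclic boundary components;
\item $R$ is infinite dimensional, since a depth-$n$ vertex of a combinatorial horoball has at least $2^n$ horizontal neighbours, pairwise at distance $\le 3$. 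So $\overline{R}$ is not an ENR, and the Bestvina--Mess argument does not transfer verbatim.
\end{itemize}
A smaller correction: $H^*_c(R_0,\partial R_0)\cong H^*_c(R)$ holds because each horoball region can be pushed properly to infinity and so has vanishing compactly supported cohomology. Retracting it onto its horosphere is not a proper map, so that is not the reason. Type $F$ is available in both directions, since the peripherals are closed surface groups and $G$ is torsion free \cite{BoundariesTypeF}. Granting this, your torsion-freeness argument, Step~3, and the ``\v{C}ech cohomology $2$--sphere'' half of Step~2 are fine.

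The genuine gap is the upgrade in Step~2 from a \v{C}ech cohomology $2$--sphere to $S^2$.
\begin{itemize}
\item \textbf{Parabolic points.} You assert that the proper cocompact action of the surface group $P$ on $U=\partial_{\mathbb{P}}G\setminus\{p\}$ gives the local homology of a plane at $p$. This does not follow. By the exact sequence of the pair $(\partial_{\mathbb{P}}G,U)$, the local homology at $p$ is controlled by the homology of $U$; it is the planar one if, say, $U$ is acyclic. A proper cocompact $P$--action says nothing about this, since $P$ acts properly and cocompactly on $\mathbb{R}^2\times F$ for every compactum $F$. Showing that $U$ behaves like $\mathbb{R}^2$ is essentially the conclusion itself, and $U$ contains a dense set of other parabolic points whose local structure is just as unknown.
\item \textbf{Conical points.} This case is not ``as in the hyperbolic case'' either. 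Bestvina's argument runs on Poincar\'e duality for a cocompact contractible model, whereas every neighbourhood of a conical point in $\overline{R}$ meets infinitely many horoballs.
\item \textbf{What is needed.} At both kinds of points you need a uniform Lefschetz duality statement for $(R_0,\partial R_0)$, combined with the collar structure of the horoballs. None of this is in the proposal.
\item \textbf{Wilder's theorem.} Concluding that a homology $2$--manifold is a $2$--manifold needs more than pointwise local homology. You also need finite dimensionality and homological local connectedness of $\partial_{\mathbb{P}}G$. In Bestvina's setting \cite{LocalHomOfBoundaries} these are supplied by a genuine $Z$--structure, which your infinite-dimensional $\overline{R}$ does not provide.
\end{itemize}
So the implication ``$PD(3)$ pair $\Rightarrow\partial_{\mathbb{P}}G\cong S^2$'' is not established. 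One way to close it is to pass to an honest $Z$--structure on a $PD(3)$ group, for instance the double, with each parabolic point blown up to the circle boundary of its surface group in the style of Dahmani. You would then apply Bestvina's theorem there and recover $\partial_{\mathbb{P}}G$ by collapsing the peripheral circles.
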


More generally, we note that Manning and Wang \cite[Theorem 1.3]{S2boundaryMW} show that the analogous statement of \thref{bestvina boundary calculator} holds for a relatively hyperbolic pair $(G,\mathbb{P})$ if $G$ has type $F$, equivalently, if $G$ has a finite $K(G,1)$. By \cite[Theorem 0.1]{BoundariesTypeF}, $G$ has type $F$ if it is torsion free and each peripheral subgroup has type $F$. Since we are focused on the case where $n=3$, \thref{TWPD3} is all we need.

This next lemma connects the cohomological and boundary properties of a relatively hyperbolic group with all of its finite index subgroups. 

\begin{lemma}\thlabel{cohom boundary connector}
    Suppose $(G,\mathbb{P})$ is a torsion free relatively hyperbolic group and $H < G$ is a finite index subgroup with induced peripheral structure $\mathbb{P}_H$. The following are equivalent. 
    \begin{enumerate}
        \item $(G,\mathbb{P})$ is a $PD(3)$ pair. 
        \item $\partial_{\mathbb{P}}G \cong S^2$.
        \item $(H,\mathbb{P}_H)$ is a $PD(3)$ pair.
        \item $\partial_{\mathbb{P}_H}H \cong S^2$.
    \end{enumerate}
\end{lemma}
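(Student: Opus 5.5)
The equivalences follow by chaining two results already in hand: \thref{TWPD3}, which characterizes $PD(3)$ pairs among relatively hyperbolic pairs, and \thref{fin index subgroup is rel hyp}, which says that passing to a finite index subgroup with the induced peripheral structure does not change the Bowditch boundary. I would establish $(1)\Leftrightarrow(2)$, then $(2)\Leftrightarrow(4)$, then $(3)\Leftrightarrow(4)$.

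For $(1)\Leftrightarrow(2)$: $(G,\mathbb{P})$ is relatively hyperbolic and, by hypothesis, torsion free. \thref{TWPD3} then says directly that $(G,\mathbb{P})$ is a $PD(3)$ pair if and only if $\partial_{\mathbb{P}}G \cong S^2$.

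For $(2)\Leftrightarrow(4)$: by \thref{fin index subgroup is rel hyp}, the pair $(H,\mathbb{P}_H)$ is relatively hyperbolic and $\partial_{\mathbb{P}_H}H \cong \partial_{\mathbb{P}}G$. Hence one boundary is homeomorphic to $S^2$ exactly when the other is.

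For $(3)\Leftrightarrow(4)$: we apply \thref{TWPD3} again, this time to $(H,\mathbb{P}_H)$. This pair is relatively hyperbolic by \thref{fin index subgroup is rel hyp}. It is torsion free because $H$ is a subgroup of the torsion free group $G$. So $(H,\mathbb{P}_H)$ is a $PD(3)$ pair if and only if $\partial_{\mathbb{P}_H}H \cong S^2$.

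The only step needing care is the use of \thref{fin index subgroup is rel hyp}. One has to confirm that $\mathbb{P}_H$, as defined in \thref{induced peripheral}, really is a set of representatives of the maximal parabolic subgroups for the restricted cusp uniform action. One also has to confirm that the Bowditch boundary does not depend on which cusp uniform action is used to compute it. With the lemma already established, this amounts to observing that the $H$-action on the same space $X$ has the same limit set, namely all of $\partial X$, since $H$ has finite index in $G$. Given that, the proof is a direct combination of the cited results.
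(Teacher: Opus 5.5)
Your proposal is correct and matches the paper's proof: $(2)\Leftrightarrow(4)$ comes from \thref{fin index subgroup is rel hyp}, and both $(1)\Leftrightarrow(2)$ and $(3)\Leftrightarrow(4)$ come from \thref{TWPD3}. Your explicit remark that $H$ is torsion free as a subgroup of $G$ spells out a hypothesis check that the paper leaves implicit (the paper also notes in passing that $(1)\Leftrightarrow(3)$ holds without hyperbolicity by Bieri--Eckmann, but this is not needed).
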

\begin{proof}
    Recall that $(H,\mathbb{P}_H)$ is a relatively hyperbolic group and $\partial_\mathbb{P}G \cong \partial_{\mathbb{P}_H}H$ by \thref{fin index subgroup is rel hyp}, essentially because $H$ acts geometrically finitely whenever $G$ does. This makes $(2)$ and $(4)$ equivalent. The equivalences $(1) \Longleftrightarrow (2)$ and $(3) \Longleftrightarrow (4)$ are both the previous theorem. That completes the proof, but we also note that $(1)$ and $(3)$ are equivalent without any hyperbolicity assumptions using \cite[Theorem 7.6]{BieriEckmann}. 
\end{proof}

Petrosyan and Sun prove many cohomological results about Dehn fillings in \cite{CohomOfFillings}. They work in the more general context where $(G,\mathbb{P})$ is a group with a collection of hyperbolically embedded subgroups, and prove various results for long enough fillings. In particular they show that, with some assumptions, long enough fillings of a $PD(n)$ pair produce a $PD(n)$ group. The following gives an effective version of this statement by listing the necessary properties of such a filling. This result does not appear verbatim in their work, but follows from it.

\begin{theorem}\thlabel{filled group is PDn}\cite[Theorem 5.2]{CohomOfFillings}
    Suppose $(G,\{P_i\}_i)$ is relatively hyperbolic and a $PD(n)$ pair. Suppose a set of filling kernels $\{N_i\}_i$ satisfy the following:
    \begin{enumerate}
        \item $(G,\{P_i\}_i, \{N_i\}_i)$ is a Cohen--Lyndon triple,
        \item $N_i \cong \ZZ$ for each $i$,
        \item $\overline{P_i} := P_i/N_i$ is a $PD(n-2)$ group and embeds into $\overline{G}$,
        \item $(\overline{G},\{\overline{P}_i\}_i)$ is relatively hyperbolic.
    \end{enumerate}
    Then $\overline{G}$ is a $PD(n)$ group. 
\end{theorem}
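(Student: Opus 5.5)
The plan is to read the statement off \cite[Theorem 5.2]{CohomOfFillings}. Petrosyan and Sun work with a group $G$ and a family of hyperbolically embedded subgroups $\{P_i\}_i$. The task is then to check that each of the four hypotheses listed here supplies one of the inputs their argument needs, and that nothing else is required.

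\textbf{Step 1: the setting.} Since $(G,\{P_i\}_i)$ is relatively hyperbolic, the $P_i$ are hyperbolically embedded in $G$. So we are in their setting.

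\textbf{Step 2: the excision isomorphism.} Their key tool is an excision theorem for Cohen--Lyndon triples. When $(G,\{P_i\}_i,\{N_i\}_i)$ is Cohen--Lyndon (hypothesis (1)) and each $P_i/N_i$ embeds in $\overline{G}$ (hypothesis (3)), they obtain isomorphisms
\[H^*(\overline{G},\{\overline{P}_i\}_i; M)\cong H^*(G,\{P_i\}_i;M)\]
for $\overline{G}$--modules $M$, and similarly in homology. Heuristically, Cohen--Lyndon says that $K(\overline{G},1)$ is the homotopy pushout of $K(G,1)\leftarrow \bigsqcup K(P_i,1)\rightarrow \bigsqcup K(\overline{P}_i,1)$. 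This is the group-theoretic version of gluing solid tori onto a $3$--manifold with torus boundary. I will invoke their theorem for this isomorphism rather than reprove it.

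\textbf{Step 3: duality for the filled group.} I feed in the $PD(n)$ pair structure of $(G,\{P_i\}_i)$. This gives Lefschetz duality for the pair and makes each $P_i$ a $PD(n-1)$ group. Since $N_i\cong\ZZ$ (hypothesis (2)) and $\overline{P}_i$ is $PD(n-2)$ (hypothesis (3)), the extension $1\to N_i\to P_i\to \overline{P}_i\to 1$ behaves like a circle bundle. Combining this with the excision isomorphism and the long exact sequence of the pair $(\overline{G},\{\overline{P}_i\}_i)$ with $\ZZ\overline{G}$ coefficients, I will compute $H^k(\overline{G},\ZZ\overline{G})$. The expected answer is $\ZZ$ for $k=n$ and $0$ otherwise. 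This is the computation carried out in \cite[Theorem 5.2]{CohomOfFillings}, and I plan to quote it.

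\textbf{Step 4: finiteness.} It remains to show that $\overline{G}$ is of type $FP$. This is where hypothesis (4) enters.
\begin{enumerate}
\item Since $\overline{G}$ is hyperbolic relative to the $\overline{P}_i$, it has a relative classifying space that is finite modulo the peripherals.
\item Each $\overline{P}_i$ is $PD(n-2)$ and hence of type $FP$.
\item The standard result that a relatively hyperbolic group whose peripherals are of type $FP$ is itself of type $FP$ then gives the conclusion. Alternatively, type $FP$ follows from the pushout decomposition in Step 2, since $G$ (as part of a $PD$ pair), the $P_i$, and the $\overline{P}_i$ are all of type $FP$.
\end{enumerate}

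I expect the main obstacle to be bookkeeping rather than new mathematics. Petrosyan and Sun state their results under ``sufficiently long filling'' hypotheses. The work is to isolate which consequences of length their proof actually uses, and to check that these are exactly (1)--(4) here. In particular, relative hyperbolicity of the filling and injectivity of $\overline{P}_i\to\overline{G}$ replace Osin's theorem, and the Cohen--Lyndon property replaces their use of \cite{DGO}. Once that translation is made, the theorem follows.
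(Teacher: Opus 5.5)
Your plan is the same as the paper's: quote \cite[Theorem 5.2]{CohomOfFillings}, pass from relative hyperbolicity to hyperbolic embedding via \cite[Proposition 4.28]{DGO}, and argue that the ``sufficiently long'' hypothesis enters Petrosyan--Sun's argument only through consequences that (1)--(4) supply. That verification is the whole content of the proof, though, so you should carry it out rather than defer it as bookkeeping. The paper does it result by result:
\begin{itemize}
\item their Lemma 4.1 through Corollary 4.7 use only the Cohen--Lyndon property;
\item their Theorem 4.8 and Corollary 4.9 need the $\overline{P}_i$ to be hyperbolically embedded in $\overline{G}$, which (3) and (4) give, again by \cite[Proposition 4.28]{DGO};
\item their Theorem 4.10 combines these.
\end{itemize}
So in the paper's accounting, (4) is used only to make the filled peripherals hyperbolically embedded in $\overline{G}$. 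That differs from your Step 4, where (4) is used for finiteness.

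The genuinely wrong step is Step 4(3). A relatively hyperbolic group whose peripheral subgroups are of type $FP$ need not be of type $FP$. Type $FP$ forces finite cohomological dimension, hence torsion-freeness, so any hyperbolic group with torsion (hyperbolic relative to the empty collection) is a counterexample. Torsion-freeness of $\overline{G}$ is not among the hypotheses, so this route fails as stated.

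Your alternative through the pushout does work, but the asphericity you called heuristic is exactly what has to be proved. Let $K=\ker(G\to\overline{G})$.
\begin{itemize}
\item By (3), $\overline{P}_i$ injects into $\overline{G}$ and $P_i\cap K=N_i$.
\item Hence the universal cover of the pushout is homotopy equivalent to the $K$--cover of $K(G,1)$ with each lift $K(tN_it^{-1},1)$ of $K(P_i,1)$ coned off.
\item By (1), these lifts induce isomorphisms $\bigoplus H_k(tN_it^{-1})\to H_k(K)$ for $k\geq 1$.
\item Mayer--Vietoris then shows this simply connected space is acyclic, hence contractible.
\end{itemize}
The corresponding mapping-cone resolution gives $\overline{G}$ finite cohomological dimension and type $FP$, from the same properties of $G$, $P_i$ and $\overline{P}_i$. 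Alternatively, drop Step 4 and let finiteness come out of Petrosyan--Sun's argument, as the paper does.
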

\begin{proof}
    This statement differs from the statement of \cite[Theorem 5.2]{CohomOfFillings} in two ways. 
    
    First, \cite[Theorem 5.2]{CohomOfFillings} supposes that we have a collection of hyperbolically embedded subgroups in $G$, rather than a relatively hyperbolic group. By \cite[Proposition 4.28]{DGO}, if $(G,\mathbb{P})$ is relatively hyperbolic, then $\mathbb{P}$ is hyperbolically embedded in $G$, so our assumption is stronger than Petrosyan and Sun's. 
    
    Second, \cite[Theorem 5.2]{CohomOfFillings} is stated only for long enough fillings and doesn't specify the properties of the filling. The properties in the assumption are exactly what is needed to make the filling long enough; The proof of \cite[Theorem 5.2]{CohomOfFillings} given relies only on commutative algebra arguments and the results of \cite[$\S$4]{CohomOfFillings}, which in turn only needs fillings which satisfy the assumed properties. Explicitly, 
    \begin{itemize}
        \item \cite[Lemma 4.1 through Corollary 4.7]{CohomOfFillings} are commutative algebra arguments with Cohen--Lyndon triples,
        \item \cite[Theorem 4.8, Corollary 4.9]{CohomOfFillings} assumes that the $\overline{P}_i$ are hyperbolically embedded in $\overline{G}$. We assume these subgroups embed in $\overline{G}$ and that $(\overline{G},\{\overline{P}_i\}_i)$ is relatively hyperbolic, which is stronger than assuming the $\overline{P}_i$ are hyperbolically embedded, as explained above, and
        \item \cite[Theorem 4.10]{CohomOfFillings} combines the preceding results in this section. 
    \end{itemize} 
\end{proof}

%% file: Sections/3_BuildingBranchedCovers.tex
\section{A Metric Criterion}\label{sec 3}

In the introduction, we explain how to construct branched covers of a $3$--manifold by drilling along a geodesic, taking a finite cover of the drilled manifold, and filling the finite cover. In analogy with this strategy, we construct algebraic branched covers by starting with a generalized drilling $(\widehat{G},P,N)$ of a hyperbolic group $G$, passing to a finite index subgroup $\widehat{H}$, and filling $\widehat{H}$ using the induced filling from $N$ to get a group $H$. To conclude that $H$ is indeed hyperbolic and therefore a branched cover of $G$, we prove a metric criterion \thref{main metric lemma try2} for when a Dehn filling of a relatively hyperbolic group gives a relatively hyperbolic group. Combined with \thref{hyp rel to hyps}, this gives a tool for showing a filling is hyperbolic. It is essentially an effective version of \cite[Proposition 2.2]{QCandDehnFillings} and \cite[Proposition 2.3]{Agol_2009}, and uses the same proof strategy. We need an effective statement so that we can apply it to specific fillings later.

The following definition is the setting in which we prove \thref{main metric lemma try2}. It combines the flexibility of a cusp uniform action in \thref{gromovs defn} with the ease of working with a free action on a graph and combinatorial horoballs.

\begin{definition}\thlabel{defn pseudo cusped space}
    A \emph{pseudo--cusped space} for a group pair $(G,\{P_i\}_i)$ is a pair of simplicial graphs $(\widehat{Y},\widecheck{Y})$ so that 
    \begin{enumerate}
        \item $G$ acts freely and cocompactly on $\widecheck{Y}$,
        \item $\widehat{Y}$ is $\theta_1$--hyperbolic and is constructed from $\widecheck{Y}$ by attaching combinatorial horoballs to a $G$--invariant collection $\mathbb{S}$ of connected subgraphs of $\widecheck{Y}$ called \emph{horospheres},
        \item the $\{P_i\}_i$ form a collection of conjugacy class representatives for $G$--stabilizers of horospheres.
    \end{enumerate}
\end{definition}

\begin{lemma}\thlabel{pseudo cusped implies rel hyp}
    If a group pair $(G,\{P_i\}_i)$ has a pseudo--cusped space $(\widehat{Y},\widecheck{Y})$, then $(G,\{P_i\}_i)$ is relatively hyperbolic with Bowditch boundary $\partial \widehat{Y}$.
\end{lemma}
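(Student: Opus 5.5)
The plan is to show that the action of $G$ on $\widehat{Y}$ is cusp uniform in the sense of \thref{gromovs defn}, and then apply \thref{equivalence of defns}. The Bowditch boundary is the boundary of any space carrying a cusp uniform action, since these spaces are all equivariantly quasi-isometric; so the second conclusion will come out of the same argument.

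First, properness. Because $G$ acts freely and cocompactly on the simplicial graph $\widecheck{Y}$, the graph $\widecheck{Y}$ is locally finite. Each horosphere is a connected subgraph of a locally finite graph, so each combinatorial horoball on it is locally finite as well. At a vertex of $\widecheck{Y}$, only finitely many horospheres can pass through, because a horosphere through a vertex contains an incident edge or that vertex. I will check this using cocompactness and $G$--invariance of $\mathbb{S}$, possibly needing that the horospheres are locally finite in number. Granting this, $\widehat{Y}$ is a proper $\theta_1$--hyperbolic graph on which $G$ acts by isometries.

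Next, the horoballs. For each horoball $\sH$ I take $\sH^L$, the points of depth at least $L$, with $L\geq 1$. By \thref{finally write it down}, each $\sH^L$ is a coarse horoball centered at the limit point of $\sH$. These sets are pairwise disjoint, since distinct horoballs meet only along $\widecheck{Y}$ at depth $0$. The collection is $G$--equivariant, and I will use open depth $>L$ so that the union $U$ is open. The truncated space $\widehat{Y}\setminus U$ consists of $\widecheck{Y}$ together with the slabs of depth at most $L$. Each such slab lies within distance $L$ of $\widecheck{Y}$, and by local finiteness the quotient $(\widehat{Y}\setminus U)/G$ is compact, because $\widecheck{Y}/G$ is a finite graph.

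The main obstacle is identifying the maximal parabolic subgroups with conjugates of the $P_i$. The stabilizer of a horosphere stabilizes its horoball and hence its center $\xi$. It acts cocompactly on the horosphere: the horosphere is a connected subgraph, the action is free, and there are finitely many $G$--orbits of horosphere vertices with stabilizer acting on each horosphere. I will try to justify this cocompactness by noting that the horoball is a coarse horoball whose boundary region must be covered by finitely many translates of a compact fundamental domain. From this, $\mathrm{Stab}(\sH)$ is a parabolic subgroup fixing $\xi$. Conversely, any element fixing $\xi$ must permute the horoballs and fix $\xi$. Disjoint horoballs have distinct centers, since $\xi$ is approached only through $\sH$, so such an element stabilizes $\sH$. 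This shows that the maximal parabolic subgroups are exactly the horosphere stabilizers. Condition (3) of \thref{defn pseudo cusped space} then says that the $P_i$ represent their conjugacy classes, so the action is cusp uniform and $\partial_{\{P_i\}}G=\partial\widehat{Y}$.
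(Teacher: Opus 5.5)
Your overall route is the paper's: make the action on $\widehat{Y}$ cusp uniform via \thref{finally write it down} and apply \thref{equivalence of defns}. The paper's proof simply asserts that cusp uniformity is clear. The problem is that the step you yourself call the main obstacle is left open, and the justification you sketch does not work. Knowing that $\sH^L$ is a coarse horoball, and that $\sS$ is covered by $G$--translates of a compact fundamental domain for $\widecheck{Y}$, says nothing about $\textrm{Stab}_G(\sS)$: those translates are by arbitrary elements of $G$, not by elements of the stabilizer.

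The same hole sits under your properness step. The observation that ``a horosphere through a vertex contains an incident edge'' bounds nothing, since many horospheres may share an edge. These are essentially one issue. The horospheres in $G\cdot\sS$ through a vertex $v$ correspond to the $\textrm{Stab}_G(\sS)$--orbits in $Gv\cap\sS$. So local finiteness of $\widehat{Y}$, which you need for properness and for compactness of $(\widehat{Y}\setminus U)/G$, is tied to cocompactness of $\textrm{Stab}_G(\sS)$ on $\sS$. Neither follows from $G$--invariance of $\mathbb{S}$ plus cocompactness on $\widecheck{Y}$.

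The missing ingredient is that distinct horospheres are disjoint. This is not written into \thref{defn pseudo cusped space}, but the paper uses it in the proof of \thref{stabilizer of image of horoball is image of stabilizer}. It also holds for the truncated cusped space built in the proof of Theorem~\ref{Main Theorem}. Granting it, each vertex of $\widecheck{Y}$ lies in at most one horosphere, so $\widehat{Y}$ is locally finite and hence proper. Moreover, if $v$ and $gv$ both lie in $\sS$, then $gv\in\sS\cap g\sS$ forces $g\in\textrm{Stab}_G(\sS)$, so $\sS/\textrm{Stab}_G(\sS)$ injects into the finite graph $\widecheck{Y}/G$. That yields cocompactness on each horosphere, finitely many orbits of horospheres, and compactness of the truncated quotient.

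Separately, your identification of parabolics only shows that each horosphere stabilizer is the full stabilizer of its center. To know there are no other maximal parabolic subgroups, you also need every parabolic fixed point to be a horoball center. This holds because a geodesic ray to any other boundary point cannot stay in one horoball, so it returns infinitely often to $\widecheck{Y}$, which lies in the cocompact truncated region. Its endpoint is therefore a conical limit point, which cannot be fixed by a parabolic subgroup.
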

\begin{proof}
    By \thref{finally write it down}, the combinatorial horoballs in \thref{defn pseudo cusped space} are coarse horoballs, and it is clear that the action of $(G,\{P_i\}_i)$ on $\widehat{Y}$ is cusp--uniform.
\end{proof}

\begin{proposition}\thlabel{main metric lemma try2}
    Let $(\widehat{Y},\widecheck{Y})$ be a pseudo--cusped space for the group pair $(G,\{P_i\}_i)$. Let $\{N_i\triangleleft P_i\}_i$ be a collection of filling kernels for $G$ with $K = \llangle N_i\rrangle_G$, and let $\varphi:G \longrightarrow \overline{G} = G/K$ be the natural quotient. Suppose that \begin{enumerate}
        \item the hyperbolicity constant $\theta_1$ of $\widehat{Y}$ satisfies $\theta_1 \geq \theta_0$, where $\theta_0$ is the universal hyperbolicity constant of combinatorial horoballs in \thref{horoballs are hyperbolic}, and
        \item if $B$ is a ball of radius $6\cdot 10^7\theta_1$ in $\widehat{Y}$ around a point $z \in \widecheck{Y}$, then the natural quotient $\widehat{Y}\longrightarrow \leftQ{\widehat{Y}}{K}$ is injective on $B$. 
    \end{enumerate}
    Then $(\overline{G},\{\varphi(P_i)\}_i)$ is relatively hyperbolic.
\end{proposition}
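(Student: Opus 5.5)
We will show that the quotient $\overline{Y} := \leftQ{\widehat{Y}}{K}$, together with the image $\leftQ{\widecheck{Y}}{K}$, forms a pseudo--cusped space for $(\overline{G},\{\varphi(P_i)\}_i)$; relative hyperbolicity then follows from \thref{pseudo cusped implies rel hyp}. This is the strategy of \cite[Proposition 2.2]{QCandDehnFillings}, with hyperbolicity supplied by the Coarse Cartan--Hadamard theorem, \thref{coarseCH}, applied with $\nu$ comparable to $\theta_1$ and $\sigma$ just above $10^7\nu$.

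\textbf{Combinatorial structure.} Since $K$ is generated by conjugates of the $N_i$, and each $N_i$ stabilizes its horoball and fixes depth, $K$ acts on each horoball by horoball automorphisms. Hence $\overline{Y}$ is obtained from $\overline{\widecheck{Y}} = \leftQ{\widecheck{Y}}{K}$ by attaching quotients of horoballs. We then check that these quotients are combinatorial horoballs on the quotient horospheres: a horizontal edge at depth $n$ should connect vertices whose images are within $2^n$ in the quotient horosphere. I expect some care is needed here, possibly only up to quasi-isometry. We also check that:
\begin{itemize}
\item $\overline{G}$ acts cocompactly on $\overline{\widecheck{Y}}$;
\item the action is free, because an element fixing a vertex $\bar z$ lifts to $g$ with $gz = kz$, and freeness of $G$ on $\widecheck{Y}$ forces $g\in K$;
\item the stabilizers of quotient horospheres are the images $\varphi(P_i)$.
\end{itemize}

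\textbf{Local hyperbolicity.} Set $\sigma = 6\cdot 10^7\theta_1/2$, or a similar fraction of the injectivity radius. Take a ball $\overline{B}$ of radius $\sigma$ in $\overline{Y}$. There are two cases.
\begin{itemize}
\item If the center lies within about $\sigma$ of $\overline{\widecheck{Y}}$, then $\overline{B}$ is contained in the image of a ball of radius $2\sigma \le 6\cdot 10^7\theta_1$ about a point of $\widecheck{Y}$. By hypothesis (2) the quotient map is injective there. A path-length argument (geodesics of length $<$ injectivity radius lift) then shows it is an isometry on the smaller ball, so $\overline{B}$ is isometric to a ball in the $\theta_1$--hyperbolic space $\widehat{Y}$. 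That ball is $O(\theta_1)$--hyperbolic as a geodesic space, after possibly passing to its convex hull.
\item Otherwise $\overline{B}$ lies entirely in the deep part of a single quotient horoball. It is then a ball in a combinatorial horoball, which is $\theta_0\le\theta_1$--hyperbolic by \thref{horoballs are hyperbolic} and hypothesis (1).
\end{itemize}
So every $\sigma$--ball is $\nu$--hyperbolic with $\nu = c\,\theta_1$ for a small constant $c$. The constant $6\cdot 10^7$ is meant to make $\sigma > 10^7\nu$ hold.

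\textbf{Coarse simple connectivity.} Since $\widehat{Y}$ is $\theta_1$--hyperbolic, it is $16\theta_1$--simply connected by \thref{Hyperbolic spaces are coarsely simply connected}. Since $K$ is normally generated by the $N_i$, each element of $K$ is a product of conjugates of elements of peripheral kernels. Accordingly, a loop in $\overline{Y}$ lifts to a path in $\widehat{Y}$ from $x$ to $kx$, which can be closed up by homotoping across horoballs: a path from $y$ to $ny$ with $n\in N_i$ descends into the horoball and becomes short deep down. More precisely, the image of a horoball is a combinatorial horoball, hence $5$--simply connected by \thref{horoballs are simply connected}. Loops in $\overline{Y}$ are therefore products of images of short loops of $\widehat{Y}$ and loops within quotient horoballs, so $\overline{Y}$ is $\max(16\theta_1,5)$--simply connected, which is below $10^{-5}\sigma$. \thref{coarseCH} then gives that $\overline{Y}$ is $300\nu$--hyperbolic.

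\textbf{Main obstacle.} I expect the hardest step to be the constant bookkeeping in the local hyperbolicity step, namely getting $\sigma>10^7\nu$ from a single injectivity radius. This includes checking that balls centered far from $\overline{\widecheck{Y}}$ but meeting it are still handled. If the two cases do not fit, I would first try enlarging the central ball so that every $\sigma$--ball either sits in one horoball image or lies within an injectivity ball centered on $\widecheck{Y}$.
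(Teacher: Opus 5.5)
Your architecture matches the paper's. You pass to the quotient by $K$ and verify the two hypotheses of \thref{coarseCH}: local hyperbolicity from hypothesis (2), and coarse simple connectivity from $K=\llangle N_i\rrangle$ together with $5$--simple connectivity of horoballs. You then recognize a pseudo--cusped space for $(\overline{G},\{\varphi(P_i)\}_i)$ and apply \thref{pseudo cusped implies rel hyp}.

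\textbf{The local step fails as you calibrated it.} You take $\sigma=3\cdot 10^7\theta_1$ and a center $\bar z$ within $\sigma$ of a point $\bar z'$ of $K\backslash\widecheck{Y}$, with lifts $\tilde z,\tilde z'$. You then claim that injectivity on $B(\tilde z',2\sigma)$ makes the quotient map an isometry on $B(\tilde z,\sigma)$. It does not. A geodesic downstairs between two points of $B(\bar z,\sigma)$ can run up to $2\sigma$ from $\bar z$, hence up to $3\sigma$ from $\bar z'$. The shortcut it represents is invisible to injectivity on $B(\tilde z',2\sigma)$.

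For a concrete example, let $k$ translate a line through $\tilde z$ by $3\sigma-2\epsilon$, with small $\epsilon>0$. Equivariantly hang a segment of length $\sigma$ from $\tilde z$ to $\tilde z'$. The quotient map is injective on $B(\tilde z',2\sigma)$. However, the two points of the line at distance $\sigma-\epsilon$ from $\tilde z$ are $2\sigma-2\epsilon$ apart upstairs and at most $\sigma$ apart downstairs.

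To get an isometry on a $\sigma$--ball you need injectivity on about $2\sigma$ around its own center. One way is to lift both halves of a geodesic and compare the lifts of the midpoint; using \thref{upgrade to isometry} instead needs about $3\sigma$.

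Your deep case has the same problem. A ball centered just deeper than $\sigma$ lies in one horoball, but it need not be isometric to a ball of that horoball, because geodesics between its shallow points can exit through the horosphere. The depth threshold must therefore be a multiple of $\sigma$.

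Once these losses are paid, these arguments allow at most $\sigma\approx 1.5\cdot 10^7\theta_1$. That leaves no room for a lossy constant $c$, so you must take $\nu=\theta_1$. This means using directly that each ball is isometric to a ball of $\widehat{Y}$ or of a horoball; no convex hull is involved.

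\textbf{How the paper calibrates it.} The paper takes $\sigma_0=10^7\theta_1$ and $\nu=\theta_1$.
\begin{enumerate}
\item If $z$ has depth at least $3\sigma_0$, then $B(z,3\sigma_0)$ lies in a single horoball.
\item Otherwise, let $z'$ be the depth-$0$ point above $z$. Then $B(z,3\sigma_0)\subset B(z',6\sigma_0)$, and hypothesis (2) gives injectivity there.
\end{enumerate}
In both cases \thref{upgrade to isometry} makes $B(z,\sigma_0)$ isometric to a ball in a $\theta_1$--hyperbolic space. So $6\cdot 10^7\theta_1=2\cdot 3\cdot\sigma_0$: the factor $2$ pays for moving the center to depth $0$, and the factor $3$ pays for upgrading injectivity to isometry.

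\textbf{The paper's construction avoids your open check.} It does not quotient all of $\widehat{Y}$. It sets $\widecheck{\Gamma}=K\backslash\widecheck{Y}$; since $K$ acts freely there, $\widecheck{Y}\to\widecheck{\Gamma}$ is a covering with deck group $K$. It then attaches fresh combinatorial horoballs to the images of the horospheres. This removes the check you left open. In $K\backslash\widehat{Y}$ the quotient of a horoball is not literally a combinatorial horoball:
\begin{itemize}
\item horizontal edges from $(v,n)$ to $(kv,n)$ become loops;
\item parallel edges appear;
\item an involution in $K\cap P_i$ would invert edges.
\end{itemize}
With the paper's construction, coarse simple connectivity runs through $1\to\pi_1(\widecheck{Y})\to\pi_1(\widecheck{\Gamma})\to K\to 1$, as in your sketch.
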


\begin{assumption}
    For the rest of this subsection, fix notation as in the statement of the previous proposition, as well as the following.
    \begin{enumerate}
        \item Let $\widecheck{\Gamma} = \leftQ{\widecheck{Y}}{K}$ with $p:\widecheck{Y}\longrightarrow \widecheck{\Gamma}$ the natural quotient map. Note that $p$ and $\varphi$ intertwine the actions of $G,\overline{G}$: for $z \in \widecheck{Y}, g \in G$, we have $p(g\cdot z) = \varphi(g)\cdot p(z)$. 
        \item Construct $\widehat{\Gamma}$ from $\widecheck{\Gamma}$ by attaching a combinatorial horoball to the image of each horosphere from $\widecheck{Y}$, and extend $p$ to a depth preserving map $\widehat{Y}\longrightarrow \widehat{\Gamma}$.
        \item Let $\sS_i$ be the horosphere of $\widecheck{Y}$ stabilized by $P_i$ and let $\sH_i$ be the horoball of $\widehat{Y}$ containing $\sS_i$. 
        \item Let $\overline{\sS}_i = p(\sS_i)$, and let $\overline{\sH}_i = p(\sH_i)$ be the horoball of $\widehat{\Gamma}$ attached to $\overline{\sS}_i$. 
        \item Increasing $\theta_1$ if necessary, assume $16\theta_1 \geq 5$.
        \item Let $\sigma=10^7\theta_1$.
    \end{enumerate}
\end{assumption}

\begin{lemma}\thlabel{diagram star}
    The following diagram commutes. Each map is a covering map, and $q,p,F$ have deck groups $G, K, G/K$ respectively.  

    \begin{center}
\begin{tikzcd}
\widecheck{Y} \arrow[rr, "q"] \arrow[rd, "p"'] &                                    & \leftQ{\widecheck{Y}}{G} \\
                                               & \widecheck{\Gamma} \arrow[ru, "F"'] &            
\end{tikzcd}
\end{center}
\end{lemma}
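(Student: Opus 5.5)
We first define $F$ so that the triangle commutes, and then check the covering statements one map at a time.

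\emph{Defining $F$.} Since $K \triangleleft G$, every $K$--orbit in $\widecheck{Y}$ lies in a single $G$--orbit, so $q$ is constant on the fibers of $p$. We therefore define $F(Kz) = Gz$. This is well defined, continuous (as a map of quotient spaces), and $F\circ p = q$ by construction. We also record the induced action of $\overline{G}=G/K$ on $\widecheck{\Gamma}$, namely $\varphi(g)\cdot p(z) = p(g\cdot z)$. This action is well defined because $K$ is normal: $p(gkz)=p\big((gkg^{-1})gz\big)=p(gz)$.

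\emph{$q$ and $p$ are coverings.} By \thref{defn pseudo cusped space}, $G$ acts freely on the simplicial graph $\widecheck{Y}$. We want the action to be properly discontinuous, and the main point is that a free simplicial action could in principle invert an edge. We handle this in one of two ways. We may interpret ``freely'' as free on the graph, including midpoints of edges, which already rules out inversions. Alternatively, we pass to the barycentric subdivision, which changes neither the spaces nor the maps. A free action on a graph without inversions is a covering space action: the open star of a vertex $v$ is moved off itself by every nontrivial $g$, since $gv\neq v$ and no edge is inverted. Hence $q$ is a regular covering with deck group $G$. The subgroup $K$ also acts freely, so $p$ is a regular covering with deck group $K$.

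\emph{$F$ is a covering.} The group $\overline{G}$ acts on $\widecheck{\Gamma}$, and $\widecheck{\Gamma}/\overline{G} = \widecheck{Y}/G$, with $F$ equal to this quotient map. It remains to show that the $\overline{G}$--action is free. Suppose $\varphi(g)p(z)=p(z)$. Then $gz = kz$ for some $k\in K$, so $k^{-1}g$ fixes $z$. Freeness of the $G$--action gives $g=k\in K$, i.e. $\varphi(g)=1$. The same reasoning applied to edges shows that $\overline{G}$ inverts no edges. Therefore $\overline{G}$ acts on $\widecheck{\Gamma}$ as a covering space action, and $F$ is a regular covering with deck group $G/K$.

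The only delicate step is confirming that free simplicial actions are genuinely covering actions, which is the edge-inversion issue dealt with above; everything else is formal.
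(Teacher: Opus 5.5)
This is essentially the paper's argument. Freeness of the $G$--action makes $q$ and $p$ coverings, and normality of $K$ makes the induced $\overline{G}$--action on $\widecheck{\Gamma}$ free with quotient $\leftQ{\widecheck{Y}}{G}$, so $F$ is a covering. You additionally make explicit the definition of $F$, the commutativity, and the edge-inversion issue, all of which the paper leaves implicit.

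One correction to that extra detail: a free action without inversions need not move the open star of a vertex off itself. For $\ZZ$ translating its Cayley graph, the open stars of $0$ and $1$ share the open edge between them, so your stated reason ($gv\neq v$ and no inversions) is not enough. There are two easy repairs. You can take as evenly covered neighborhoods the open balls of radius $1/2$ about vertices together with the open edges. Alternatively, work in the barycentric subdivision and use that $G$ preserves vertex type there, so $v$ and $gv$ are never adjacent and their open stars are genuinely disjoint.
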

\begin{proof}
    If a group acts freely on a metric graph, the quotient is a covering map. $G$ acts freely on $\widecheck{Y}$ by assumption, so $K$ does as well, which implies the result for $q,p$. The normality of $K$ implies $\overline{G}$ acts freely on $\widecheck{\Gamma}$, implying the result for $F$. 
\end{proof}

We show that $\widehat{\Gamma}$ is hyperbolic using the Coarse Cartan Hadamard \thref{coarseCH}, then show that $(\widehat{\Gamma},\widecheck{\Gamma})$ is a pseudo--cusped space for $(\overline{G},\{\overline{P}_i\}_i)$, and conclude.

\begin{lemma}\thlabel{widehatGamma is simply connected}
    $\widehat{\Gamma}$ is $16\theta_1$--simply connected.
\end{lemma}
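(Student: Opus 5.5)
We will show that every edge loop in $\widehat{\Gamma}$ is, in $\pi_1$, a product of conjugates of loops of length at most $16\theta_1$. The approach is to lift loops to $\widehat{Y}$, use the coarse simple connectivity of $\widehat{Y}$ and of the attached horoballs, and deal separately with the loops that fail to lift because of $K$.

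First we extend the covering picture of \thref{diagram star} to the horoballs. The map $p:\widehat{Y}\to\widehat{\Gamma}$ is depth preserving, and it sends each horoball $\sH(\sS)$ onto the horoball $\sH(p(\sS))$. Note that $p$ is no longer a covering on horoballs, since horizontal edges at depth $n$ depend on distances in the image horosphere.

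Next we reduce the loops of $\widehat{\Gamma}$ to loops in $\widecheck{\Gamma}$. Let $c$ be an edge loop in $\widehat{\Gamma}$. Each maximal subpath of $c$ lying in a horoball $\overline{\sH}$ has endpoints on the horosphere $\overline{\sS}$. Since $\overline{\sS}$ is connected, we can replace that subpath by a path in $\overline{\sS}$ with the same endpoints. The difference is a loop in $\overline{\sH}$, and by \thref{horoballs are simply connected} it is a product of conjugates of loops of length at most $5\le 16\theta_1$. So it suffices to treat loops in $\widecheck{\Gamma}$.

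Now take a loop $c\subset\widecheck{\Gamma}$ and lift it via the covering $p|_{\widecheck{Y}}$ to a path $\tilde c$ in $\widecheck{Y}$. Its endpoints differ by some $k\in K$, so $\tilde c$ is a loop exactly when $k=1$.

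\emph{Case $k=1$.} The loop $\tilde c$ lies in $\widehat{Y}$, which is $\theta_1$--hyperbolic and hence $16\theta_1$--simply connected by \thref{Hyperbolic spaces are coarsely simply connected}. Thus $\tilde c$ is a product of conjugates of loops of length at most $16\theta_1$, and applying the continuous map $p$ carries this decomposition to $c$.

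\emph{Case $k\neq 1$.} This is the main obstacle. Since $K=\llangle N_i\rrangle$, write $k$ as a product of conjugates $g n g^{-1}$ with $n\in N_i$. The plan is to show that each such $g n g^{-1}$ contributes a loop that becomes trivial modulo short loops in $\widehat{\Gamma}$; after peeling these off, we are back in the case $k=1$. For the contribution, take a vertex $x\in g\sS_i$ and a path in $g\sS_i$ from $x$ to $gng^{-1}x$; this exists because $g\sS_i$ is connected and stabilized by $gP_ig^{-1}\ni gng^{-1}$. Its image under $p$ is a loop in the horosphere $p(g\sS_i)$, because $gng^{-1}\in K$ acts trivially on the quotient. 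That loop lies in a single horoball of $\widehat{\Gamma}$, so it is killed by loops of length at most $5$ using \thref{horoballs are simply connected}.

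To assemble this, connect the basepoint of $\tilde c$ to $x$ by a path $\alpha$ and use $p(\alpha)$ as a tail. Then $c$ is homotopic, modulo the short horoball loops just produced, to a loop whose lift has endpoint discrepancy $k(gng^{-1})^{-1}$. We induct on the number of conjugates of elements of $N_i$ needed to express $k$. The care needed here is the bookkeeping: the loop $\tilde c\cdot(\text{translates of }\alpha\text{ and horosphere paths})$ must close up in $\widecheck{Y}$, and this is exactly what the equivariance $p(g\cdot z)=\varphi(g)\cdot p(z)$ guarantees. Combining the cases shows that every loop in $\widehat{\Gamma}$ is generated by loops of length at most $\max(5,16\theta_1)=16\theta_1$.
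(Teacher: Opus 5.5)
Your proof is correct and is essentially the paper's argument. The paper splits $\pi_1(\widecheck{\Gamma})$ via the short exact sequence $1\to\pi_1(\widecheck{Y})\to\pi_1(\widecheck{\Gamma})\to K\to 1$, kills the $\pi_1(\widecheck{Y})$ part using that $\widehat{Y}$ is $16\theta_1$--simply connected, and kills the $K$ part by realizing each generator $n\in N_i$ as a horosphere loop, which is exactly what your endpoint-discrepancy induction does in path-lifting language. Your $k=1$ step (pushing a null-homotopy in $\widehat{Y}^{16\theta_1}$ forward along $p$) is slightly cleaner than the paper's Seifert--van Kampen phrasing; it only uses the implicit fact that the depth-preserving extension $p:\widehat{Y}\to\widehat{\Gamma}$ sends horizontal edges to horizontal edges or collapses them, since $d_{p(\sS)}(p(v),p(w))\le d_{\sS}(v,w)$, so short loops stay short.
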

\begin{proof}
    Suppose $\overline{\sH}$ is a horoball of $\widehat{\Gamma}$ with horosphere $\overline{\sS}$ and let $\gamma$ be a loop in $\overline{\sS}$. Because $16\theta_1 > 5$ and horoballs are $5$--simply connected by \thref{horoballs are simply connected}, $\gamma$ is nullhomotopic in $\pi_1^{16\theta_1}(\overline{\sH})$. By definition $\widehat{\Gamma}$ is $\widecheck{\Gamma}$ with horoballs added to these horospheres, and adding these horoballs kills the $16\theta_1$ fundamental group of that horosphere. Therefore to show $\pi_1^{16\theta_1}(\widehat{\Gamma})$ is trivial, it is enough to show that it is normally generated by loops contained in horospheres. By the previous lemma and standard covering space theory, there is a short exact sequence of groups
    \[1 \longrightarrow \pi_1(\widecheck{Y}) \longrightarrow \pi_1(\widecheck{\Gamma}) \longrightarrow K \longrightarrow 1.\]
    Because $\widehat{Y}$ is $\theta_1$--hyperbolic, it is $16\theta_1$--simply connected by \thref{Hyperbolic spaces are coarsely simply connected}.
    Adding these horoballs kills all of $\pi_1^{16\theta_1}(\widecheck{Y})$ so the Seifert Van Kampen Theorem implies $\pi_1^{16\theta_1}(\widecheck{Y})$ is generated by loops which are freely homotopic to loops contained in elements of $\mathbb{S}$. Such loops map to loops in the horospheres of $\widecheck{\Gamma}$, which become nullhomotopic in $\widehat{\Gamma}^{16\theta_1}$. 
    
    Consider some $N_i$ and fix some $x \in \sS_i$. Because $N_i$ stabilizes $\sS_i$ and $\sS_i$ is connected, we can connect $x$ to $n\cdot x$ by a path in $\sS_i$ for any $n \in N_i$. This path maps to a loop in $\overline{\sS}_i$ through $p$ which represents an element of $\pi_1(\widecheck{\Gamma})$ which is not in the image of $\pi_1(\widecheck{Y})$. Because $K = \llangle N_i\rrangle$, this implies that the contribution of $K$ to $\pi_1^{16\theta_1}(\widecheck{\Gamma})$ is normally generated by loops which are freely homotopic to loops in the horospheres of $\widecheck{\Gamma}$, which again become nullhomotopic in $\widehat{\Gamma}^{16\theta_1}$.
\end{proof}

The previous lemma handles the coarsely simply connected requirement of \thref{coarseCH}, and we show the local hyperbolicity requirement using the large injectivity radius. Roughly speaking, the following upgrades a local homeomorphism to a local isometry.

\begin{lemma}\thlabel{upgrade to isometry}\cite[Lemma 7.8]{Drilling}
    Let $A,B$ be metric graphs, and let $B_R(p)$ be a ball of radius $R$ in $A$ for some integer $R >3$. Suppose we have an injective simplicial map $f:B_R(p) \longrightarrow B$ which is a local homeomorphism at every point of $B_{R-1}(p)$. Then $f$ restricts to an isometry $f':B_{R/3}(p) \longrightarrow B_{R/3}(f(p))$. 
\end{lemma}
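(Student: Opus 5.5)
The plan is to show three things: $f$ does not increase distances, paths in $B$ lift through $f$ near $p$, and these lifts give both the reverse inequality and surjectivity onto $B_{R/3}(f(p))$.

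\emph{Distances do not increase.} A simplicial map between metric graphs sends each edge onto an edge or a vertex. Hence it sends paths to paths of no greater length, so $d_B(f(x),f(y))\le d_A(x,y)$ for all $x,y\in B_R(p)$. It follows that $f(B_{R/3}(p))\subset B_{R/3}(f(p))$.

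\emph{Path lifting.} Let $u\in B_{R-1}(p)$. Since $f$ is a local homeomorphism at $u$, the open star of $u$ maps homeomorphically onto the open star of $f(u)$. In particular, the edges at $u$ are in bijection with the edges at $f(u)$, and no edge is collapsed. So an edge path $\beta$ in $B$ starting at $f(u)$ can be lifted edge by edge to a path $\tilde\beta$ in $A$ starting at $u$, with $f\circ\tilde\beta=\beta$. Because no edge is collapsed, the lift has the same length as $\beta$. The lift can be continued as long as each vertex from which we extend lies in $B_{R-1}(p)$. The endpoint of the lift may lie anywhere in $B_R(p)$, since nothing is required there. To avoid technicalities with non-vertex points, I would first treat vertices and then pass to interior points of edges by subdividing the relevant edge, or by a direct argument on a single edge, using $R>3$.

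\emph{Reverse inequality.} Take $x,y\in B_{R/3}(p)$ and let $\beta$ be a geodesic in $B$ from $f(x)$ to $f(y)$. By the first step its length is $L\le 2R/3$. Lift $\beta$ starting at $x$. A point reached after time $t$ along the lift is at distance at most $R/3+t$ from $p$. Every vertex from which we must continue occurs at time at most $L-1$, hence at distance at most $R-1$ from $p$. So the lift exists, and its endpoint $\tilde y$ lies in $B_R(p)$ and satisfies $f(\tilde y)=f(y)$. Injectivity of $f$ on $B_R(p)$ forces $\tilde y=y$. Therefore $d_A(x,y)\le L=d_B(f(x),f(y))$, and $f$ is an isometric embedding on $B_{R/3}(p)$.

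\emph{Surjectivity.} Given $z\in B_{R/3}(f(p))$, lift a geodesic from $f(p)$ to $z$ starting at $p$. Since its length is at most $R/3$, the lift stays well inside $B_{R-1}(p)$. Its endpoint is a preimage of $z$ lying in $B_{R/3}(p)$. Hence $f'$ maps $B_{R/3}(p)$ onto $B_{R/3}(f(p))$, and $f'$ is an isometry.

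The main obstacle is the bookkeeping in the lifting step. We must make sure the lift never needs the local homeomorphism property outside $B_{R-1}(p)$. We also need the final point to lie in $B_R(p)$, so that injectivity identifies it with $y$. The constants $R/3+2R/3=R$ and the integrality of $R$ make this work, but only just, so this is where I would be most careful.
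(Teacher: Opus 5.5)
Your proposal is correct. It is the standard path-lifting proof of this lemma, which the paper imports from \cite[Lemma 7.8]{Drilling} without proof: $f$ is $1$--Lipschitz on $B_{R/3}(p)$, geodesics of $B$ lift through the local homeomorphism on $B_{R-1}(p)$, injectivity on $B_R(p)$ pins the endpoint of the lift to $y$, and lifting from $f(p)$ gives surjectivity.

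Two pieces of bookkeeping need tightening. First, state the $1$--Lipschitz bound only for pairs whose $A$--geodesic stays in $B_R(p)$. Second, for non-vertex endpoints the continuation vertices $v$ occur at times $<L$ rather than $\le L-1$; this still gives $d(p,v)\le R-1$. When $p$ is a vertex this follows by integrality. In general, a vertex with $d(p,v)>R-1$ could only be followed by a partial edge lying in the image of the edge containing $y$, and injectivity then forces $d(p,v)\le R/3+1\le R-1$.
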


\begin{lemma}\thlabel{modelling for criterion}
    Any ball of radius $\sigma_0$ in $\widehat{\Gamma}$ is isometric to a ball in a $\theta_1$--hyperbolic space.
\end{lemma}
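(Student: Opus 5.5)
Throughout, $\sigma_0$ is a radius of order $\sigma = 10^7\theta_1$; I take $\sigma_0 = 2\sigma$, which is well below $6\cdot 10^7\theta_1$. The aim is a ball in $\widehat{Y}$ mapping isometrically onto any ball of radius $\sigma_0$ in $\widehat{\Gamma}$. Write $R = 6\cdot 10^7\theta_1$, so $R/3 = 2\sigma$. The map $p:\widehat{Y}\to\widehat{\Gamma}$ is simplicial and depth preserving. On $\widecheck{Y}$ it is a covering map by \thref{diagram star}. On each horoball it is induced by the covering $\sS\to p(\sS)$ of horospheres. Since horizontal edges at depth $n$ join points whose horospherical distance is at most $2^n$, $p$ need not be a local homeomorphism deep in a horoball. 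It is a local homeomorphism at a point exactly when it is injective on a small neighborhood, and this will come from the injectivity hypothesis.

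I would split into two cases according to the center $c$ of the ball. \emph{Case 1:} $c$ lies within distance $\sigma_0$ of $\widecheck{\Gamma}$, or more generally within some fixed distance $D$ of a point of $\widecheck{\Gamma}$. Lift that nearby point to some $z\in\widecheck{Y}$. By hypothesis (2), $p$ is injective on $B_R(z)$. Since $p$ is surjective onto $B_R(p(z))$ (paths lift), and injective on slightly larger neighborhoods of every point of $B_{R-1}(z)$, it is a local homeomorphism at each such point. Then \thref{upgrade to isometry} gives an isometry $B_{R/3}(z)\to B_{R/3}(p(z))$. The ball around $c$ sits inside this ball provided $D+\sigma_0\le R/3$, so I would tune constants by shrinking $\sigma_0$ to $\sigma$ and using $D=\sigma$. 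The ball around $c$ is then isometric to a ball in the $\theta_1$--hyperbolic space $\widehat{Y}$. One subtlety is that \thref{upgrade to isometry} needs injectivity on a ball of radius $R$ centered at $z$ in $A$ together with the local homeomorphism property. I would verify the latter using injectivity of $p$ on $B_R(z)$ plus the fact that $p$ is simplicial and locally surjective on links (because the deck group $K$ acts with $p$ as quotient).

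\emph{Case 2:} $c$ is at distance more than $\sigma$ from $\widecheck{\Gamma}$. Then $B_{\sigma}(c)$ lies entirely inside a single horoball $\overline{\sH}$ of $\widehat{\Gamma}$, since different horoballs only meet through $\widecheck{\Gamma}$. The point is that $\overline{\sH}$ is itself a combinatorial horoball on the connected graph $\overline{\sS}$. I also need that distances between points of this ball are realized by paths inside $\overline{\sH}$: any path leaving $\overline{\sH}$ must travel to depth $0$ and back, which has length greater than $2\sigma$, while the ball has diameter at most $2\sigma$. Hence the ball in $\widehat{\Gamma}$ is isometric to a ball in $\sH(\overline{\sS})$. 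By \thref{horoballs are hyperbolic} that space is $\theta_0$--hyperbolic, and hence $\theta_1$--hyperbolic by hypothesis (1).

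The main obstacle is Case 1: setting up the local homeomorphism hypothesis of \thref{upgrade to isometry} rigorously for $p$ on horoball parts, and matching the radii $R$, $R/3$, $\sigma_0$, $\sigma$ so that the two cases cover everything. I expect the final choice to be $\sigma_0=\sigma$, with the threshold between the cases at distance $\sigma$ from $\widecheck{\Gamma}$. This radius is exactly what \thref{coarseCH} needs, since $\sigma = 10^7\theta_1$ and the simple connectivity constant $16\theta_1$ is at most $10^{-5}\sigma = 100\theta_1$. With a slight enlargement of $\sigma$ (to get strict inequality $\sigma > 10^7\theta_1$), this will feed into the Coarse Cartan--Hadamard theorem.
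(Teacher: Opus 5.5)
Your overall plan matches the paper's: split by the depth of the center, use hypothesis (2) with \thref{upgrade to isometry} near $\widecheck{\Gamma}$, and use $\theta_0\le\theta_1$ deep in a horoball. However, the radii do not close up, and the gap is exactly in the matching you flagged. With $\sigma_0=\sigma$ and the threshold at depth $\sigma$, your Case 2 justification is false. If $c$ has depth $\sigma+1$, the point $x$ directly above $c$ at depth $1$ lies in $B_\sigma(c)$. A path from $x$ that leaves $\overline{\sH}$ and returns to another point $y$ of the ball only has length at least $\mathrm{depth}(x)+\mathrm{depth}(y)$, which can be tiny, not more than $2\sigma$. Nothing you have established rules out a route through $\widecheck{\Gamma}$ between nearby horosphere points that is shorter than the route inside $\overline{\sH}$; horoballs need not be isometrically embedded near depth $0$. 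Your counting argument only works once $\mathrm{depth}(c)>2\sigma$. Raising the threshold to $2\sigma$ forces Case 1 to handle $D=2\sigma$, which needs $D+\sigma_0=3\sigma\le R/3=2\sigma$, and that fails. Shrinking $\sigma_0$ to $2\sigma/3$ instead drops below the radius $10^7\theta_1$ required by \thref{coarseCH}. So centers at depth in $(\sigma,2\sigma]$ are covered by neither case.

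The loss comes from centering \thref{upgrade to isometry} at the lift $z$ of a horosphere point, which yields an isometry only on $B_{2\sigma}(p(z))$, and then trying to fit $B_\sigma(c)$ inside it. The fix, which is what the paper does, is to center the lemma at $c$ itself. Suppose $c$ has depth $<3\sigma$. Let $z'\in\widecheck{\Gamma}$ be the depth-$0$ point above $c$ and $\widetilde{z}\in\widecheck{Y}$ a lift of $z'$. Then $B(c,3\sigma)\subset B(z',6\sigma)$, and $p$ is injective on $B(\widetilde{z},6\sigma)$ by hypothesis (2). The local inverse $\iota$ of $p$, restricted to $B(c,3\sigma)$, is therefore an injective simplicial local homeomorphism into $\widehat{Y}$. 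Applying \thref{upgrade to isometry} with $R=3\sigma$ gives an isometry of $B(c,\sigma)$ onto a ball of the $\theta_1$--hyperbolic space $\widehat{Y}$. The deep case is then depth $\ge 3\sigma$. There every point of $B_\sigma(c)$ has depth $\ge 2\sigma$, so any path leaving $\overline{\sH}$ between two such points has length $\ge 4\sigma>2\sigma$, and your argument goes through. Alternatively, apply \thref{upgrade to isometry} to the inclusion of $B(c,3\sigma)$ into $\overline{\sH}$.
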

\begin{proof}
    Fix $z \in \widehat{\Gamma}$. By \thref{upgrade to isometry}, it suffices to show that $B = B(z,3\sigma_0)$ injects into a $\theta_1$--hyperbolic graph. There are two cases depending on whether or not $z$ is deep in a horoball.

    If $z$ is deeper than $\sigma_0$ in a horoball $\overline{\sH}$ of $\widehat{\Gamma}$, then the inclusion $B \longrightarrow \overline{\sH}$ is injective. All combinatorial horoballs are $\theta_0$--hyperbolic and $\theta_0 \leq \theta_1$ by assumption, so this completes this case.

    Otherwise, $z$ is at depth less than $3\sigma_0$. If $z$ is at positive depth, let $z'$ be the point above $z$ at depth $0$, and if $z$ is at depth $0$, let $z' =z$. Either way, there is some $z' \in \widecheck{\Gamma}$ with $d_{\widehat{\Gamma}}(z,z')\leq 3\sigma_0$ so that $B = B(z,3\sigma_0) \subset B(z',6\sigma_0)$. Let $\widetilde{z}$ be a point of $\widecheck{Y}$ so that $p(\widetilde{z}) = z'$. By assumption, $p$ is injective on $B(\widetilde{z},6\sigma_0)$, so we can define an injective inverse $\iota:B(z',6\sigma_0) \longrightarrow B(\widetilde{z},6\sigma_0)$. The restriction of $\iota$ to $B$ gives the desired injection of $B$ into a $\theta_1$--hyperbolic space.
\end{proof}

\begin{lemma}\thlabel{widehatGamma is hyperbolic}
    The space $\widehat{\Gamma}$ is $\theta_2 := 300\theta_1$--hyperbolic. 
\end{lemma}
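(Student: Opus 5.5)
We apply the Coarse Cartan Hadamard Theorem, \thref{coarseCH}, to $Z = \widehat{\Gamma}$ with $\nu = \theta_1$ and a scale $\sigma$ chosen slightly larger than $10^7\theta_1$, say $\sigma = \sigma_0$ with $10^7\theta_1 < \sigma_0 \le 2\cdot 10^7\theta_1$. Its conclusion is then that $\widehat{\Gamma}$ is $300\nu = 300\theta_1 = \theta_2$--hyperbolic, as claimed. We must check two hypotheses: that every ball of radius $\sigma_0$ in $\widehat{\Gamma}$ is $\theta_1$--hyperbolic, and that $\widehat{\Gamma}$ is $10^{-5}\sigma_0$--simply connected. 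We also note that $\widehat{\Gamma}$ is a geodesic metric space, since it is a connected graph with unit-length edges: $\widecheck{\Gamma}$ is connected as the image of the connected graph $\widecheck{Y}$, and every horoball is attached along a connected horosphere.

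For local hyperbolicity we invoke \thref{modelling for criterion}. It shows that any ball of radius $\sigma_0$ in $\widehat{\Gamma}$ is isometric to a ball in a $\theta_1$--hyperbolic space, either a combinatorial horoball (which is $\theta_0 \le \theta_1$--hyperbolic) or $\widehat{Y}$ itself. The radii fit: that lemma needs injectivity of $p$ on balls of radius $6\sigma_0$ centered in $\widecheck{Y}$. This is exactly hypothesis (2) of \thref{main metric lemma try2} when $\sigma_0 = 10^7\theta_1$, and it holds for any $\sigma_0 \le 10^7\theta_1 \cdot(\text{1 + slack})$ once we read the hypothesis with $6\sigma_0 \le 6\cdot 10^7\theta_1$. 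So we take $\sigma_0 = \sigma = 10^7\theta_1$ if \thref{coarseCH} is applied with a non-strict inequality.

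The main obstacle is that \thref{coarseCH} as stated demands the strict inequality $\sigma > 10^7\nu$. I would handle it by applying the theorem with $\nu = \theta_1$ and $\sigma = \sigma_0 = 10^7\theta_1$, after replacing $\nu$ by a marginally smaller constant, or more cleanly by noting that \thref{modelling for criterion} has room to spare. The injectivity radius $6\sigma_0$ in hypothesis (2) is used only to produce an isometric ball of radius $\sigma_0$ via \thref{upgrade to isometry}, so the construction works verbatim at any radius slightly above $10^7\theta_1$ once hypothesis (2) is read with its natural slack. The subtlety lies only in the constants, and I would phrase it so that $\sigma$ satisfies $10^7\theta_1 < \sigma$ while $6\sigma$ still lies within the injectivity hypothesis. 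A subsidiary point is that a ball isometric to a ball in a $\theta_1$--hyperbolic space is itself $\theta_1$--hyperbolic in the sense required by \thref{coarseCH}, which is how that theorem's local hypothesis is meant.

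For coarse simple connectivity we use \thref{widehatGamma is simply connected}, which says $\widehat{\Gamma}$ is $16\theta_1$--simply connected. Since $10^{-5}\sigma_0 \ge 10^{2}\theta_1 \ge 16\theta_1$, attaching disks along all loops of length at most $10^{-5}\sigma_0$ includes all those of length at most $16\theta_1$. Hence $\pi_1^{10^{-5}\sigma_0}(\widehat{\Gamma})$ is a quotient of the trivial group $\pi_1^{16\theta_1}(\widehat{\Gamma})$, and so it is trivial. With both hypotheses verified, \thref{coarseCH} gives that $\widehat{\Gamma}$ is $300\theta_1$--hyperbolic.
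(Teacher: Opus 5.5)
Your argument is the paper's. It applies \thref{coarseCH} with $\nu=\theta_1$ and $\sigma=\sigma_0$, takes local hyperbolicity from \thref{modelling for criterion}, and takes coarse simple connectivity from \thref{widehatGamma is simply connected} via $10^5\cdot 16\theta_1\le 10^7\theta_1$.

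The strict inequality $\sigma>10^7\nu$ you worry about is a real bookkeeping point. The paper simply ignores it: it takes $\sigma_0=10^7\theta_1$, writes $\sigma_0\ge 10^7\theta_1$, and applies the theorem anyway.

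Your proposed patches do not close it as written, though. Shrinking $\nu$ below $\theta_1$ forfeits the only local hyperbolicity you know. Hypothesis (2) of \thref{main metric lemma try2} gives injectivity at radius exactly $6\cdot 10^7\theta_1$, so there is no ``natural slack'' to draw on. Taking $\sigma>10^7\theta_1$ genuinely needs one of two things. The first is a marginally larger injectivity radius. The second is a sharper version of \thref{modelling for criterion}, for instance an upgrade-to-isometry step losing less than the factor $3$ that, combined with the recentering at depth $0$, produces the required radius $6\sigma$.
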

\begin{proof}
    \thref{widehatGamma is simply connected} shows that $\widehat{\Gamma}$ is $16\theta_1$--simply connected, and \thref{modelling for criterion} shows that every ball of radius $\sigma_0$ in $\widehat{\Gamma}$ is isometric to a ball in a $\theta_1$--hyperbolic space. Since $10^5(16\theta_1) \leq 10^7\theta_1$, we have $\sigma_0 \geq\max \{10^7\theta_1,10^5(16\theta_1)\}$, so this result follows from the Coarse Cartan Hadamard \thref{coarseCH}.
\end{proof}

\begin{lemma}\thlabel{stabilizer of image of horoball is image of stabilizer}
    The set $\{\overline{\sH}_i\}_i$ is a collection of representatives for distinct $\overline{G}$--orbits of horoballs in $\widehat{\Gamma}$. If $\overline{\sH}$ is a horoball of $\widehat{\Gamma}$, then $\textrm{Stab}_{\overline{G}}(\overline{\sH})$ is a conjugate of some $\varphi(P_i)$ which acts geometrically on $\overline{\sS}_i$.
\end{lemma}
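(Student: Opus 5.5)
We prove the lemma by transferring the corresponding statements for $\widehat{Y}$ down along $p$, using the intertwining relation $p(g\cdot z)=\varphi(g)\cdot p(z)$ and the covering map $p:\widecheck{Y}\to\widecheck{\Gamma}$ with deck group $K$ from \thref{diagram star}. By construction, the horoballs of $\widehat{\Gamma}$ are attached exactly to the images $p(\sS)$ of the horospheres $\sS\in\mathbb{S}$. Since $p$ is equivariant, $p(g\sS_i)=\varphi(g)\overline{\sS}_i$. Every horosphere of $\widecheck{Y}$ is $g\sS_i$ for some $g\in G$ and some $i$, because the $P_i$ represent the conjugacy classes of stabilizers. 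Hence every horoball of $\widehat{\Gamma}$ is a $\overline{G}$--translate of some $\overline{\sH}_i$.

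For distinctness, suppose $\varphi(g)\overline{\sS}_i=\overline{\sS}_j$. Then $p(g\sS_i)=p(\sS_j)$, so some point of $g\sS_i$ and some point of $\sS_j$ differ by an element $k\in K$. The key point is then that $kg\sS_i=\sS_j$, which forces $i=j$ by the choice of representatives. To get this we need: \emph{if two horospheres $\sS,\sS'$ of $\widecheck{Y}$ have $p(\sS)\cap p(\sS')\ne\varnothing$, then $\sS'=k\sS$ for some $k\in K$.} This is the step I expect to be the main obstacle. I would prove it from hypothesis (2). Pick $x\in\sS$ and $kx'$ with $x'\in\sS'$ and $p(x)=p(x')$. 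Replacing $\sS'$ by $k^{-1}\sS'$ gives $x\in\sS\cap\sS'$. Since distinct horospheres carry distinct horoballs, if $\sS\ne\sS'$ then $\sH$ and $\sH'$ are distinct horoballs meeting at the depth-$0$ point $x$. That alone is no contradiction, because horospheres may intersect. So the real argument is simply: $p(\sS)$ and $p(\sS')$ have a common point, the horoball in $\widehat{\Gamma}$ attached there is attached to the whole image set, and as sets $p(\sS)=p(\sS')$ must hold for them to be the same horoball.

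So I would phrase distinctness carefully. Two horoballs of $\widehat{\Gamma}$ are equal iff their horospheres are equal as subsets. If $\varphi(g)\overline{\sS}_i=\overline{\sS}_j$, lift a point and use injectivity of $p$ on $B(x,6\cdot10^7\theta_1)$. Locally near $x$, the set $p^{-1}(\overline{\sS}_j)$ is the union of the $K$--translates of $\sS_j$. Injectivity on the ball shows that only one translate $k\sS_j$ passes through the ball in a way that matches $g\sS_i$. Chaining along $g\sS_i$, which is connected, with the local injectivity at each point, gives $g\sS_i\subset\bigcup_k k\sS_j$ with a single $k$ throughout, i.e.\ $g\sS_i=k\sS_j$. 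So $i=j$.

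For stabilizers, the previous step gives $\textrm{Stab}_{\overline{G}}(\overline{\sH}_i)=\{\varphi(g): g\sS_i=k\sS_i\text{ for some }k\in K\}=\varphi(KP_i)=\varphi(P_i)$. Any other horoball is a translate of some $\overline{\sH}_i$, so its stabilizer is a conjugate of $\varphi(P_i)$. Finally, $G$ acts freely and cocompactly on $\widecheck{Y}$, and $P_i$ is the stabilizer of $\sS_i$, so $P_i$ acts freely on $\sS_i$. The action is also cocompact: only finitely many $G$--orbits of vertices occur, and $P_i$ acts with finitely many orbits on vertices of $\sS_i$, since if $gx$ and $x$ both lie in $\sS_i$ then either $g\in P_i$ or there is an intersection, and bounded local finiteness controls this. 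Pushing forward by $p$, $\varphi(P_i)$ acts cocompactly on $\overline{\sS}_i=p(\sS_i)$. It acts freely because $\overline{G}$ acts freely on $\widecheck{\Gamma}$. Hence the action is geometric.
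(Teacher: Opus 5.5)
There is a genuine gap at the step you flag as the main obstacle: the claim that $p(g\sS_i)=p(\sS_j)$ forces $g\sS_i=k\sS_j$ for some $k\in K$. Your first attempt was the right one. It stalls only because you set aside the fact that finishes it. The paper treats distinct horospheres as disjoint: its proof of this lemma says explicitly ``the horospheres are disjoint'', and this holds for the depth--$500\theta_1$ level sets it builds later. With disjointness the claim is one line. Pick $x\in g\sS_i$; since $p(x)\in p(\sS_j)$, we get $x\in k\sS_j$ for some $k\in K$, so $g\sS_i\cap k\sS_j\neq\varnothing$ and hence $g\sS_i=k\sS_j$.

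The chaining argument you substitute does not work. Injectivity of $p$ on a ball $B$ only says that no two points of $B$ are $K$--equivalent. It does not prevent two different translates $k\sS_j$ and $k'\sS_j$ from passing through $B$, or even through the same vertex $x$. Such a vertex gives two points $k^{-1}x,\,k'^{-1}x\in\sS_j$ that are $K$--equivalent but far apart, which the injectivity hypothesis allows. So the assertion that ``only one translate passes through the ball in a way that matches $g\sS_i$'' is unjustified. Your chain along the connected set $g\sS_i$ could switch from one translate to another at any such vertex. Hypothesis (2) plays no role in this lemma in the paper; it is used only for local hyperbolicity.

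The same issue affects your cocompactness step. The phrase ``either $g\in P_i$ or there is an intersection, and bounded local finiteness controls this'' is not an argument. Finiteness of the number of translates of $\sS_i$ through a vertex is essentially a restatement of what you want to prove. With disjointness it is immediate: if $x,hx\in\sS_i$ then $x\in\sS_i\cap h^{-1}\sS_i$, so $h\in P_i$. Hence each of the finitely many $G$--orbits of cells of $\widecheck{Y}$ meets $\sS_i$ in at most one $P_i$--orbit.

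Once the key claim is repaired this way, the rest of your proposal goes through. Your stabilizer computation $\textrm{Stab}_{\overline{G}}(\overline{\sH}_i)=\varphi(KP_i)=\varphi(P_i)$ is actually cleaner than the paper's path-lifting argument. That argument needs the same disjointness to keep the lifted path inside $\sS_i$, and then a deck-transformation argument to identify the lift of $g$.
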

\begin{proof}
    Essentially by construction, each horoball of $\widehat{\Gamma}$ is the image of a horoball of $\widehat{Y}$. The $\{\sH_i\}_i$ form a set of representatives of $G$--orbits of horospheres in $\widehat{Y}$ and $p$ and intertwines the actions of $G,\overline{G}$ with $\varphi$, so $\{\overline{\sH}_i\}_i$ meets each $\overline{G}$--orbit of horosphere in $\widehat{\Gamma}$ at least once. We can phrase this more explicitly using elements; If $\overline{\sH} \subset\widehat{\Gamma}$ is a horoball, then $\overline{\sH} = p(\sH)$ for some horoball $\sH \subset \widehat{Y}$. There is some $g \in G$ and $i$ so that $\sH = g\sH_i$, so 
    \[\overline{\sH} = p(g\sH_i) = \varphi(g)p(\sH_i) = \varphi(g) \overline{\sH}_i. \]
    The $\{\overline{\sH}_i\}_i$ represent \emph{distinct} $\overline{G}$--orbits because $p$ collapses $K$--orbits and the $\sH_i$ are in distinct $G$--orbits, hence different $K$--orbits. More precisely, if $g\overline{\sH}_i = \overline{\sH}_j$ for some $g \in \overline{G}$, then there is some $\widetilde{g} \in G$ so that $\varphi(\widetilde{g}) = g$. Then $p(\widetilde{g}\sH_i) = p(\sH_j)$, and because $p$ is the quotient by $K$, there must be some $k \in K$ so that $k\widetilde{g}\sH_i = \sH_j$. The $\{\sH_i\}_i$ represent different $G$--orbits, so this implies $i = j$. This proves the first assertion.

    Using the first assertion, to prove the second assertion it is enough to show that $\textrm{Stab}_{\overline{G}}(\overline{\sH}_i) = \varphi(P_i)$. It is clear that $\varphi(P_i)\subset \textrm{Stab}_{\overline{G}}(\overline{\sH}_i)$, so we only need to show the reverse containment, and this is essentially a path lifting argument. 
    Suppose $g \in \textrm{Stab}_{\overline{G}}(\overline{\sH}_i)$, $x \in \overline{\sS}_i$, and $\widetilde{x} \in \sS_i$ so that $p(\widetilde{x}) = x$. Let $\gamma$ be a path in $\overline{\sS}_i$ connecting $x, g \cdot x$, and lift $\gamma$ to a path in $\sS_i$ from $\widetilde{x}$ to some $y \in \sS_i$ with $p(y) = g\cdot x$. 
    Using the diagram in \thref{diagram star}, $x,g\cdot x$ are identified in $\leftQ{\widecheck{\Gamma}}{\overline{G}} = \leftQ{\widecheck{Y}}{G}$, so $\widetilde{x},y$ are identified by some element of $G$. Thus there is some element $\widetilde{g} \in \varphi^{-1}(g)$ so that $\widetilde{g} \cdot \widetilde{x} = y$. Because $\widetilde{x}, \widetilde{g}\cdot \widetilde{x}$ are both in $\sS_i$ and the horospheres are disjoint, this implies $\widetilde{g} \in \textrm{Stab}_G(\sS_i) = P_i$. Using that the maps in the diagram of \thref{diagram star} are $D$--covering maps, this $\widetilde{g}$ descends to a deck transformation of $F$. Such deck transformations are determined by their action on a single point, and the descendent of $\widetilde{g}$ clearly takes $x$ to $g\cdot x$, so $\widetilde{g}$ is a lift of $g$. Since $g$ was arbitrary, this shows the reverse inclusion promised above. 
    
    It remains to show $\varphi(P_i)$ acts geometrically on $\overline{\sS}_i$. It acts freely because it acts by deck transformations. $P_i$ acts cocompactly on $\sS$ because $G$ acts cocompactly on $\widecheck{Y}$ and $P_i = \textrm{Stab}_G(\sS_i)$, and
    \[\leftQ{\sS_i}{P_i} = \leftQ{\overline{\sS}_i}{\varphi(P_i)}\]
    by the commutativity of the diagram in \thref{diagram star}. Thus $\varphi(P_i)$ acts cocompactly on $\overline{\sS}_i$.  
\end{proof}

We are ready to complete the goal of this subsection. 

\begin{proof}[Proof of \thref{main metric lemma try2}]
    We claim that $(\widehat{\Gamma}, \widecheck{\Gamma})$ is a pseudo--cusped space for $(\overline{G},\{\varphi(P_i)\}_i)$. Indeed, the action of $\overline{G}$ on $\widecheck{\Gamma}$ is free because $\widehat{\Gamma} = \leftQ{\widecheck{Y}}{K}$, and the action is cocompact because $\leftQ{\widecheck{\Gamma}}{\overline{G}} = \leftQ{\widecheck{Y}}{G}$ is compact. Further, $\widehat{\Gamma}$ is constructed by attaching combinatorial horoballs to $\widecheck{\Gamma}$ along the images of horospheres from $\widecheck{Y}$, and the $\overline{\sH}_i$ are a family of orbit representatives for horoballs with stabilizers $\overline{P}_i$ by \thref{stabilizer of image of horoball is image of stabilizer}. Finally, $\widehat{\Gamma}$ is hyperbolic for some constant by \thref{widehatGamma is hyperbolic}.
\end{proof}

\section{Building Branched Covers}\label{sec building}
\subsection{Deep enough subgroups and fillings give branched covers}

We restate and prove the effective version of our main theorem.

\effectivemain*
\begin{proof}
    A pseudo--cusped space for $(\widehat{G},P)$ is a pseudo--cusped space for $(\widehat{H},\mathbb{P}_{\widehat{H}})$. Let $\mathbb{P}_H$ be the set of filled peripherals for $\widehat{H}$. By \thref{induced filling for normal subgroups} (3), each element of $\mathbb{P}_H$ embeds in $H$ and is hyperbolic.
    
    For the first assertion, the conclusion of \thref{main metric lemma try2} is that $(H,\mathbb{P}_H)$ is relatively hyperbolic. As in the previous paragraph, these images are hyperbolic groups, so $H$ is hyperbolic by \thref{hyp rel to hyps} and $H$ is a branched cover of $G$. If we also assume that $(\widehat{G},P)$ is a $PD(3)$ pair, $N \cong Q\cong \mathbb{Z}$, and the action of $K_{\widehat{H}}$ satisfies the very translating condition, then we can apply \thref{filled group is PDn} to the filling of $\widehat{H}$. To explicitly check the requirements of \thref{filled group is PDn}, note that 
    \begin{enumerate}\addtocounter{enumi}{-1}
        \item $(\widehat{G},P)$ is a $PD(3)$ pair by assumption,
        \item because the action of $K_{\widehat{H}}$ is very translating, \thref{very translating implies CH} implies that $\widehat{H}$, $\mathbb{P}_{\widehat{H}}$ and the collection of induced filling kernels forms a Cohen--Lyndon triple,
        \item $N \cong \ZZ$ by assumption,
        \item Each element of $\mathbb{P}_{H}$ is isomorphic to $\ZZ$ by \thref{induced filling for normal subgroups} (3), and $\ZZ$ is a $PD(1)$ group because it is the fundamental group of the only compact 1--dimensional manifold,
        \item $(H,\mathbb{P}_H)$ is relatively hyperbolic by our application of \thref{main metric lemma try2}.
    \end{enumerate}
    Hence $H$ is a $PD(3)$ group and \thref{bestvina boundary calculator n=3} implies $\partial H \cong S^2$. 

    For the second assertion, if the action of $(\widehat{G},P)$ and $K$ on $(\widehat{Y},\widecheck{Y})$ satisfies the assumptions of \thref{main metric lemma try2}, then so does the action of $(\widehat{H},\mathbb{P}_{\widehat{H}})$ and $K_{\widehat{H}}$. Similarly, if the action of $(\widehat{G},P)$ and $K$ on $(\widehat{Y},\widecheck{Y})$ is very translating, then so is the action of $(\widehat{H},\mathbb{P}_{\widehat{H}})$, as in the proof of \thref{finite index subgroup of geom CH}. Therefore this case is a special case of the first assertion. If we add the further assumptions to $(\widehat{G},P)$, $N$, and $Q$, then again \thref{filled group is PDn} implies $H$ is $PD(3)$ group and \thref{bestvina boundary calculator n=3} implies $\partial H \cong S^2$. 
\end{proof}

We can now derive the following theorem by producing a suitable action for every input and applying the effective version above.

\MainTheorem*

\begin{proof}
    Fix a relatively hyperbolic group $(\widehat{G},P)$. We construct a suitable pseudo--cusped space $(\widehat{Y},\widecheck{Y})$ for $(\widehat{G},P)$ and choice of $F$, and then apply the previous theorem. 

    Choose a finite generating set $S$ for $\widehat{G}$ so that $P = \langle S \cap P\rangle$. Let $\textrm{Cay}$ by the Cayley graph of $\widehat{G}$ with respect to $S$ and note that the choice of $S$ implies the cosets of $P$ are connected subgraphs of $\textrm{Cay}$. Let $\widehat{Y}$ be a cusped space for $(\widehat{G},P)$ built on $\textrm{Cay}$, which is $\theta_1$--hyperbolic for some $\theta_1$ because $(\widehat{G},P)$ is relatively hyperbolic. Since $\widehat{Y}$ is not a tree (it has loops deep in horoballs), $\theta_1 \geq 1$. Increasing $\theta_1$ if necessary, assume $\theta_1 \geq \theta_0$, where $\theta_0$ is the hyperbolicity constant of horoballs from \thref{horoballs are hyperbolic}. Let $\sH' \subset \widehat{Y}$ be the combinatorial horoball attached to $P$ viewed as a subset of $\textrm{Cay}$. Then $P = \textrm{Stab}_{G}(\sH')$ and because $P$ acts transitively on itself, $\leftQ{\sH'}{P}$ is quasi--isometric to a ray (it is literally a ray with loops attached coming from deep horizontal edges). Let $\sS$ be the points of $\sH'$ at depth exactly $500\theta_1$ and let $\mathbb{S} = \{g\sS \;| \; g \in \widehat{G}\}$. Note that each element of $\mathbb{S}$ is connected because each $P$ is connected as a subset of $\textrm{Cay}$. Let $\sH$ be the points of $\sH'$ of depth $\geq 500\theta_1$ and let $\widecheck{Y}$ be the points of $\widehat{Y}$ with depth $ \leq 500\theta_1$ so that 
    \[\widehat{Y} = \widecheck {Y} \cup_\mathbb{S}\bigg(\bigcup_{g \in \widehat{G}}g\sH_i\bigg).\]
    Note that $G$ acts freely and cocompactly on $\widecheck{Y}$. Indeed, $\widehat{G}$ acts transitively on itself, so the quotient is a path of length $500\theta_1$, possibly with some loops attached. Thus $(\widehat{Y},\widecheck{Y})$ is a pseudo--cusped space for $(\widehat{G},P)$.

    Further, the horoballs $\{g\sH\;| \; g \in \widehat{G}\}$ are $10^3\theta_1$--separated because any geodesic connecting distinct $g\sH, h\sH$ must travel up through $g\sH'$ into $\textrm{Cay}$, through $\textrm{Cay}$ to $hP$, and down through $h\sH'$ to $h\sH$. The first and last leg of this journey have length $500\theta_1$, for a total length of at least $10^3\theta_1$. 

    Let $\sigma_0= 10^7\theta_1$. By \thref{injective on balls centered on cayley graph}, there is a finite subset $F$ so that if $N \triangleleft P$ is a filling kernel with $N \cap F = \varnothing$ and $K = \llangle N \rrangle_{\widehat{G}} $, then any ball of radius $500\theta_1 + 6\sigma_0$ in $\widehat{Y}$ centered at a point in $\textrm{Cay}$ maps injectively into $\widehat{Y}/K$. It follows that for any such $K$, this quotient is also injective on balls of radius $6\sigma_0$ with center in $\widecheck{Y}$, since this ball is contained in a ball of radius $500\theta_1 + 6\sigma_0$ centered at a point in $\textrm{Cay}$. This is the $F$ promised in the statement.

    \begin{claim}\thlabel{only claim}
    Suppose $N\triangleleft P$ is a filling kernel with $N \cap F = \varnothing$.
    If $n \in N \setminus \{1\}$ and $x \notin \sH$, then $d_{\widehat{Y}}(x,n \cdot x) \geq 10^4\theta_1$. 
\end{claim}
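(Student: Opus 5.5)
We argue by cases on where $x$ sits relative to the horoball $\sH'$ containing $\sH$. Note first that $n \in N \leq P$ stabilizes $\sH'$ and preserves depth. Also, $x\notin\sH$ means $x$ lies either outside $\sH'$ or in $\sH'$ at depth less than $500\theta_1$. The key input is the choice of $F$: the quotient $\widehat{Y}\to \widehat{Y}/K$ is injective on every ball of radius $500\theta_1+6\sigma_0$ centered at a point of $\textrm{Cay}$. Since $n\in K\setminus\{1\}$ acts freely on vertices of $\textrm{Cay}$, we get $n\cdot y\neq y$ for all $y\in\textrm{Cay}$. Injectivity then forces $n\cdot y \notin B(y,500\theta_1+6\sigma_0)$, because $y$ and $n\cdot y$ have the same image in the quotient. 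Thus
\[d(y,n\cdot y) > 500\theta_1 + 6\sigma_0 \geq 10^4\theta_1 \quad \text{for all } y\in \textrm{Cay}.\]
This holds for vertices, and for points in edges as well, since the ball injectivity applies to any center in $\textrm{Cay}$.

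Now let $x \notin \sH$ be arbitrary with $x\in\widecheck{Y}$, so that $x$ has depth at most $500\theta_1$ in whatever horoball it lies in, or lies in $\textrm{Cay}$. Choose $y\in \textrm{Cay}$ directly above $x$, obtained by travelling vertically up to depth $0$, so that $d(x,y)\leq 500\theta_1$. Equivariance gives $d(n\cdot x,n\cdot y)=d(x,y)\le 500\theta_1$. The triangle inequality then yields
\[d(x,n\cdot x)\geq d(y,n\cdot y) - 1000\theta_1 > 6\sigma_0 - 500\theta_1 \geq 10^4\theta_1,\]
since $\sigma_0=10^7\theta_1$. A cleaner alternative works directly: both $x$ and $n\cdot x$ lie in $B(y,500\theta_1+6\sigma_0)$ whenever $d(x,n\cdot x)<6\sigma_0$, and they have the same image in $\widehat{Y}/K$, so injectivity forces $x=n\cdot x$. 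This contradicts freeness, because a nontrivial element of $\widehat{G}$ fixing a point of $\widecheck{Y}$ would fix the vertex above it.

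The main care point is ensuring $n\cdot x\neq x$, that is, freeness of the $N$--action at positive but shallow depth. Points inside horoballs at small depth have $P$--stabilizers equal to stabilizers of the vertex of $\textrm{Cay}$ above them, and these are trivial. Points on horizontal edges need a similar check: a horizontal edge at depth $k$ is determined by its two endpoints, and the only possible inversion would swap two vertices at the same depth, which is again excluded by injectivity. Once freeness is in hand, the injectivity radius from \thref{injective on balls centered on cayley graph} together with $\sigma_0=10^7\theta_1$ gives the bound with large room to spare.
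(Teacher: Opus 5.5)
Your argument is correct for $x\in\widecheck{Y}\setminus\sH$. Injectivity of $\widehat{Y}\to\widehat{Y}/K$ on balls of radius $500\theta_1+6\sigma_0$ centered on $\textrm{Cay}$ gives $d(y,n\cdot y)>500\theta_1+6\sigma_0$ for every $y\in\textrm{Cay}$. The triangle inequality then carries this over to points within $500\theta_1$ of $\textrm{Cay}$. But the claim concerns \emph{every} $x\notin\sH$, and $\sH$ is only the deep part of the single horoball $\sH'$ stabilized by $P$. Points at depth greater than $500\theta_1$ in a different horoball $g\sH'\neq\sH'$ lie neither in $\sH$ nor in $\widecheck{Y}$. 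Your opening dichotomy (``outside $\sH'$, or shallow in $\sH'$'') quietly becomes ``$x\in\widecheck{Y}$'', and these points are never treated. They cannot be dropped: the claim is what verifies the very translating condition, which quantifies over all of $\widehat{Y}\setminus\sH$. Your method does not reach them, because such an $x$ can be arbitrarily far from $\textrm{Cay}$.

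This is a real gap, not a routine omission. If $n\cdot g\sH'\neq g\sH'$, you can patch it. A geodesic $[x,n\cdot x]$ must then pass through some $u\in\textrm{Cay}$, and $d(x,n\cdot x)\le 6\sigma_0$ would put the distinct points $x,n\cdot x$ into $B(u,6\sigma_0)$, contradicting injectivity. But suppose $n\in P\cap P^{g}$ also stabilizes $g\sH'$. Then for a vertex $v\in gP$, the points directly below $v$ at any depth $D$ with $2^D\ge d_{\textrm{Cay}}(v,n\cdot v)$ are moved distance at most $1$, so the claim would fail. You therefore need an argument that no $n\in N\setminus\{1\}$ stabilizes a second horoball, and ``injectivity plus the triangle inequality at a nearby point of $\textrm{Cay}$'' does not give one.

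The paper's proof supplies the missing step:
\begin{enumerate}
\item Take $y$ a closest point of $\sH$ to $x$. Then $y$ has depth $500\theta_1$, and $[x,y]$ and $[n\cdot x,n\cdot y]$ stay at depth at most $500\theta_1$ in $\sH'$.
\item Use injectivity around the point of $\textrm{Cay}$ above $y$ to get $d(y,n\cdot y)\ge 10^5\theta_1$.
\item Observe that a geodesic from $y$ to $n\cdot y$ must dive far into $\sH'$.
\item Use $2\theta_1$--slimness of the quadrilateral $x,y,n\cdot y,n\cdot x$ to force $[x,n\cdot x]$ to reach nearly that depth as well.
\end{enumerate}
This handles all $x\notin\sH$ at once, including points deep in other horoballs. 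In particular it shows, a posteriori, that no such $n$ stabilizes another horoball.
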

\begin{proof}[Proof of Claim]
    Fix $n$ and let $K = \llangle N\rrangle_{\widehat{G}}$.
    
    First consider a point $y\in \sH$ with depth exactly $500\theta_1$ and let $y'$ be the point above $y$ with depth $0$. For a contradiction, suppose $d_{\widehat{Y}}(y,n\cdot y) < 10^5\theta_1$, then by the triangle inequality,
    \[d_{\widehat{Y}}(y',n\cdot y) \leq 500\theta_1 + 10^5\theta_1 < 10^7\theta_1 \leq \sigma_0.\] 
    Thus $y,n\cdot y\in B(y',\sigma_0)$. Since $n \in K$, this means the quotient $\widehat{Y}\longrightarrow \widehat{Y}/K$ is not injective on $B(y',\sigma_0)$, which contradicts the choice of $F$. This implies $d_{\widehat{Y}}(y,n\cdot y) \geq 10^5\theta_1$.

    Now suppose $x$ is any point of $\widehat{Y}\setminus \sH$, and choose a point $y \in \sH$ so that $d_{\widehat{Y}}(x,\sH) = d_{\widehat{Y}}(x,y)$. Note that $y$ must have depth $500\theta_1$, so the previous paragraph implies $d_{\widehat{Y}}(y,n\cdot y) \geq 10^5\theta_1$. By \cite[Lemma 3.10]{BoundariesOfDehnFillings}, we can choose a geodesic $\gamma$ from $y$ to $n\cdot y$ which has $2$ vertical segments and one horizontal segment of length at most $3$. Because $\gamma$ has length $d_{\widehat{Y}}(y,n\cdot y) \geq 10^5\theta_1$, all but $3$ of the edges of $\gamma$ are vertical, and $y,n \cdot y$ are at depth $500\theta_1$, we know $\gamma$ must go to a depth of at least $500\theta_1 + 10^5\theta_1 - 3$. Consider a geodesic quadrilateral with corners $x,y,n\cdot y, n\cdot x$. It is a standard fact that a geodesic $n$--gon in a $\delta$--hyperbolic space is $(n-2)\delta$--slim, hence this quadrilateral is $2\theta_1$--slim. The geodesics $[x,y]$ and $[n\cdot x, n\cdot y]$ do not go deeper than $500\theta_1$ in $\sH$ because of the choice of $y$. Therefore the points of maximal depth on $\gamma$ must be within $2\theta_1$ of some point on $[x,n \cdot x]$. Thus $[x,n \cdot x]$ must reach a depth of at least $498 \theta_1 + 10^5\theta_1 - 3$ in $\sH'$. Now $x,n\cdot x \notin \sH_i$, so $x,n\cdot x$ could have depth at most $500\theta_1 - 1$. Putting this together, we see that 
    \[d_{\widehat{Y}}(x,n\cdot x) \geq (498\theta_1 + 10^5\theta_1 - 3) - (500\theta_1 - 1) = (10^5 - 2)\theta_1- 2 \geq 10^4\theta_1,\]
    where the last step uses the fact that $\theta_1 \geq 1$. This completes the proof of \thref{only claim}.
\end{proof}

Fix a finite index subgroup $\widehat{H}$ of $\widehat{G}$ and an induced peripheral structure $(\widehat{H},\{P_i\}_i)$ with $g_i \in \widehat{G}$ and $P_i := \widehat{H} \cap P^{g_i}$. Note that $(\widehat{Y},\widecheck{Y})$ is clearly a pseudo--cusped space for $(\widehat{H},\{P_i\}_i)$, since \thref{defn pseudo cusped space} is a list of assumption on the graphs $\widehat{Y},\widecheck{Y}$, and the $\{P_i\}_i$ are exactly a collection of representatives for $\widehat{H}$--stabilizers of horoballs in $\widehat{Y}$. 

\begin{claim}
    Suppose $N \triangleleft P$ is a filling kernel and $(\widehat{H},\{P_i\}_i, \{N_i\}_i)$ is the induced group triple for $\widehat{H}$. Let $K = \llangle N \rrangle_{\widehat{G}}$ and $K_{\widehat{H}} = \llangle N_i\rrangle_{\widehat{H}}$. 
    \begin{enumerate}
        \item If $\widehat{H}$ is normal in $\widehat{G}$ and $\widehat{H} \cap F = \varnothing$, then the action of $(\widehat{H},\{P_i\}_i)$ and $K_{\widehat{H}}$ satisfies the assumptions of \thref{main metric lemma try2} and the very translating property.
        \item If $N \cap F = \varnothing$, then the action of $(\widehat{G},P)$ and $K$ satisfies the assumptions of \thref{main metric lemma try2} and the very translating property. 
    \end{enumerate}
\end{claim}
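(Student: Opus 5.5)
Part (2) comes first, and part (1) reduces to it via \thref{induced filling for normal subgroups} (1).

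For (2), suppose $N \cap F = \varnothing$. Assumption (1) of \thref{main metric lemma try2} holds because $\theta_1 \geq \theta_0$ was arranged in the construction. For assumption (2), take a ball $B$ of radius $6\sigma_0 = 6\cdot 10^7\theta_1$ centred at $z \in \widecheck{Y}$. Since $z$ has depth at most $500\theta_1$, the point $z'$ directly above it in $\textrm{Cay}$ satisfies $B \subset B(z', 500\theta_1 + 6\sigma_0)$. By the choice of $F$ through \thref{injective on balls centered on cayley graph}, the quotient by $K$ is injective on this larger ball, and hence on $B$. The very translating property is exactly \thref{only claim}: every nontrivial $n\in N$ moves each point outside $\sH$ by at least $10^4\theta_1$. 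For the conjugates $gNg^{-1}$ acting on the horoball $g\sH$ we simply translate by $g$, since the action is by isometries.

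For (1), suppose $\widehat{H} \triangleleft \widehat{G}$ and $\widehat{H} \cap F = \varnothing$. Then $N' := \widehat{H} \cap N$ is normal in $P$ (as noted in \thref{induced filling for normal subgroups} (1)), and $N' \cap F \subset \widehat{H} \cap F = \varnothing$. So (2) applies to the filling kernel $N'$: the action of $(\widehat{G},P)$ together with $K' = \llangle N' \rrangle_{\widehat{G}}$ satisfies the hypotheses of \thref{main metric lemma try2} and is very translating. By \thref{induced filling for normal subgroups} (1), $N$ and $N'$ induce the same filling of $\widehat{H}$, so $K_{\widehat{H}}$ is also the induced kernel coming from $N'$, and $K_{\widehat{H}} \le K'$.

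It remains to pass these properties down to $\widehat{H}$. The pair $(\widehat{Y},\widecheck{Y})$ is a pseudo--cusped space for $(\widehat{H},\{P_i\}_i)$, and the hyperbolicity constant is unchanged. Injectivity on balls descends because $K_{\widehat{H}} \le K'$: if two points of a ball are identified by an element of $K_{\widehat{H}}$, they are identified by an element of $K'$, which is impossible. The induced filling kernels $N_i = \widehat{H}\cap N^{g_i} = (N')^{g_i}$ consist of elements of conjugates of $N'$, so the translation bound carries over from the $\widehat{G}$ case to the horoball $g_i^{-1}\sH$ that $N_i$ stabilizes. This is the same inheritance argument as in \thref{finite index subgroup of geom CH}.

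The main obstacle is bookkeeping. In (1) we must make sure that the constraint $\widehat{H}\cap F=\varnothing$ really controls the kernel $N'$ rather than $N$. This is exactly why we switch to $N'$: $N$ itself might meet $F$. Once that is settled, everything follows from $K_{\widehat{H}} \le K'$ together with the fact that both properties are stated for subsets of $K'$ acting on a fixed space.
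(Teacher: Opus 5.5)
Your proposal is correct and matches the paper's proof. The paper also replaces $N$ by $\widehat{H}\cap N$ via \thref{induced filling for normal subgroups} (1), uses $(\widehat{H}\cap N)\cap F=\varnothing$ together with the choice of $F$ and \thref{only claim} to get injectivity and the very translating property for $K'=\llangle \widehat{H}\cap N\rrangle_{\widehat{G}}$, and then passes both to $K_{\widehat{H}}\le K'$. The only difference is organizational: you prove (2) first and apply it to the kernel $\widehat{H}\cap N$, while the paper runs the same argument for $K'$ directly inside its proof of (1).
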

\begin{proof}[Proof of Claim]
    For the first assertion, suppose further that $\widehat{H}$ is normal and $\widehat{H} \cap F = \varnothing$. Then $\widehat{H} \cap N$ is normal in $P$, and by (1) of \thref{induced filling for normal subgroups}, the filling of $\widehat{H}$ by $N$ is the same as the filling of $\widehat{H}$ by $\widehat{H} \cap N$, so we consider filling of $\widehat{H}$ induced by $\widehat{H} \cap N$ instead and let $K' = \llangle \widehat{H} \cap N \rrangle_{\widehat{G}}$. By \thref{induced filling}, the filling kernels for $\widehat{H}$ induced by $\widehat{H} \cap N$ are $\widehat{H} \cap (\widehat{H} \cap N)^{g_i} = \widehat{H} \cap N^{g_i} = N_i$. Since $K_{\widehat{H}} = \llangle N_i \rrangle_{\widehat{H}}$ and each $N_i$ is a subgroup of a conjugate of $\widehat{H} \cap N$, we have $K_{\widehat{H}} < K'$, and the following diagram commutes.
    \begin{center}\[
\begin{tikzcd}
\widehat{Y} \arrow[rr] \arrow[rd] &                            & \leftQ{\widehat{Y}}{K'} \\
                                  & \leftQ{\widehat{Y}}{K_{\widehat{H}}} \arrow[ru] &              
\end{tikzcd}
\]\end{center}

    Because $(\widehat{H} \cap N) \cap F = \varnothing$, the choice of $F$ implies that balls of radius $6\sigma_0$ centered at a point in $\widehat{Y}$ embed into $\leftQ{\widehat{Y}}{K'}$. Since the diagram commutes, these balls also embed into $\leftQ{\widehat{Y}}{K_{\widehat{H}}}$, satisfying the injectivity radius requirement of \thref{main metric lemma try2}. For the very translating assumption, we noted above that the elements of $\{g\sH \; | \; g \in \widehat{G}\}$ are $10^3\theta_1$--separated. Further, \thref{only claim} and the fact that $(\widehat{H} \cap N) \cap F = \varnothing$ imply that the action of $K'$ on $(\widehat{Y},\widecheck{Y})$ satisfies the very translating condition \thref{geometrically CH}, hence so does the action of $K_{\widehat{H}}$ on $(\widehat{Y},\widecheck{Y})$. This completes the proof of the first assertion.

    For the second assertion, suppose $N \cap F = \varnothing$. Then the choice of $F$ implies the action of $(\widehat{G},P)$ and $\llangle N \rrangle_{\widehat{G}}$ satisfies the injectivity requirement of \thref{main metric lemma try2}, and again the very translating condition for the action of $K$ is implied by the separation of horoballs and \thref{only claim}. This proves the claim.
\end{proof}

    To see how this claim implies the desired result, suppose $N \triangleleft P$ is any filling kernel so that $(\widehat{G},P,N)$ is a drilling of $\widehat{G}/\llangle N \rrangle_{\widehat{G}}$ along the image of $P$. Let $Q$ be this image and let $H$ be the induced filling of $\widehat{H}$ by $N$. 
    
    We prove Theorem \ref{Main Theorem} (1). By the previous claim, if $\widehat{H} \cap F = \varnothing$, then the action of $(\widehat{H},\{P_i\}_i)$ satisfies the assumptions of \thref{main metric lemma try2}, so Theorem \ref{effective main theorem} (1) implies $H$ is a branched cover of $G$ along $Q$. If we assume further that $(\widehat{G},P)$ is a $PD(3)$ pair and $N \cong Q \cong \ZZ$, then the previous claim implies the appropriate very translating property, so that Theorem \ref{effective main theorem} implies $H$ is a $PD(3)$ group with $\partial H \cong S^2$. The proof of Theorem \ref{Main Theorem} (2) uses the previous claim and Theorem \ref{effective main theorem} in essentially the same way.
\end{proof}

Our metric and cohomological work has borne fruit. However, we believe that the extra assumptions that $(\widehat{G},P)$ is a $PD(3)$ pair and that $N \cong Q \cong \ZZ$ used to calculate the boundary of our branched covers are unnecessary. We believe that the right metric assumptions and arguments would show that all these branched covers have boundary $S^2$. A totally different approach to building branched covers would be to adapt the `spiderwebs' of \cite{BoundariesOfDehnFillings}. Given a relatively hyperbolic group $(G,\mathbb{P})$ with $\partial_\mathbb{P}G \cong 
S^2$ and a long enough filling kernel $K \triangleleft G$, the authors approximate $K$ by bigger and bigger subgroups to build a sequence of relatively hyperbolic groups converging to $\overline{G} = G/K$ in a suitable sense. The Bowditch boundaries of these groups are all $2$--spheres, and their boundaries weakly Gromov--Hausdorff converge to $\partial \overline{G}$. They then show that the limit of spheres must be a sphere, hence $\partial \overline{G} \cong S^2$. A similar strategy is used in \cite{Drilling} to calculate that $\partial \widehat{Y} \cong S^2$. One could hope that if $(\widehat{G},P,N)$ is a drilling of $G$ along $g$ with $\partial \overline{G} \cong S^2$ and $N$ is long enough to apply the strategy in \cite{BoundariesOfDehnFillings}, this sequence of approximations would induce a sequence of approximations for the induced filling of a subgroup, allowing us to calculate the boundary for any branched cover without assuming $(\widehat{G},P)$ is a $PD(3)$ pair. This is a strong assumption which implies $G$ is torsion free and more, and it would be nice to remove it.

\subsection{Drillings from \cite{Drilling} are deep enough}

Finally, we show that the drillings produced in \cite{Drilling} are deep enough so that every finite index subgroup produces a branched cover. Unfortunately the metric properties in this construction are not strong enough to imply the injectivity condition of \thref{main metric lemma try2}, so we cannot simply apply it directly. Instead, we imitate the proof of \thref{main metric lemma try2} in this special case. To begin, we restate \cite[Theorem B]{Drilling}. 

\begin{theorem}
    Let $X$ be a hyperbolic graph with boundary $S^2$. For every hyperbolic isometry $g$ of $X$ with ($g$--invariant, quasi--geodesic) axis $\ell$, there exists a $\Sigma > 0$ so that the following holds:

    Suppose $G$ is a subgroup of $Isom(X)$ so that 
    \begin{enumerate}
        \item $G$ acts freely and cocompactly on $X$; and
        \item For any $h \in G\setminus \langle g\rangle$, the axes $\ell$ and $h\cdot \ell$ are at least $\Sigma$ apart.
    \end{enumerate}
    Then there exists a drilling $(\widehat{G},P)$ of $G$ along $g$ which satisfies the following properties:
    \begin{enumerate}
        \item $(\widehat{G},P)$ is relatively hyperbolic with $P$ either free abelian of rank $2$ or the fundamental group of the Klein bottle;
        \item The Bowditch boundary of $(\widehat{G},P)$ is $S^2$.
    \end{enumerate}
    The peripheral subgroup $P$ is free abelian if and only if $G$ preserves an orientation on $\partial X$. Finally, $\widehat{G}$ is torsion free if and only if $G$ is torsion free.
\end{theorem}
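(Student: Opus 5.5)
Since this is the restatement of \cite[Theorem B]{Drilling}, the honest proof is a citation. If I were to reprove it, I would build a pseudo--cusped space for $\widehat{G}$ directly from $X$, in the spirit of \thref{main metric lemma try2}, but run in reverse: drilling instead of filling.

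First, I would fix a large $R\ll \Sigma$ and remove from $X$ the open $R$--neighborhoods of all translates $h\cdot\ell$ for $h\in G$. Hypothesis (2) makes these neighborhoods pairwise far apart. Let $X^\circ$ be the complement and $T_h$ its frontier near $h\cdot\ell$. Because $\partial X\cong S^2$ and $\ell$ is a quasi-geodesic, I expect $T_h$ to be coarsely a cylinder $S^1\times\mathbb{R}$ linking the axis. So $X^\circ$ is coarsely connected but not coarsely simply connected: small loops around each axis survive.

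Next, I would pass to the $D$--universal cover $\widetilde{X^\circ}$ (coarse fundamental group, as in the background), in which each cylinder unwraps to a coarse plane. I would attach combinatorial horoballs to the lifts of these planes, giving a graph $\widehat{Y}$, and take $\widehat{G}$ to be the group of all lifts of elements of $G$. Then there is an extension $1\to \pi_1^D(X^\circ)\to\widehat{G}\to G\to 1$, and $\widehat{G}$ acts freely and cocompactly on the truncated part. Let $P$ be the stabilizer of one horosphere. It is an extension of $\langle g\rangle$ by the $\ZZ$ generated by the meridian loop, so $P$ is $\ZZ^2$ or the Klein bottle group according to whether $g$ preserves the orientation of that meridian, which is the orientation condition on $\partial X$. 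Setting $N$ to be the meridian subgroup, filling $\widehat{Y}$ along $N$ recovers $X$ up to quasi-isometry, so $\widehat{G}/\llangle N\rrangle\cong G$ and $(\widehat{G},P,N)$ is a drilling of $G$ along $g$. Torsion-freeness passes between $G$ and $\widehat{G}$ because the kernel of $\widehat{G}\to G$ is torsion free and is generated by conjugates of the meridian.

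Hyperbolicity of $\widehat{Y}$ would come from Coarse Cartan--Hadamard, \thref{coarseCH}. Balls far from the horoballs are isometric to balls in $X$. Balls near a horoball are modelled on a neighborhood of a single axis with its horoball attached, which is hyperbolic once $R$ and $\Sigma$ are chosen large relative to $\delta_X$ and the quasi-geodesic constants of $\ell$. Coarse simple connectivity holds by construction. Relative hyperbolicity then follows from \thref{pseudo cusped implies rel hyp}.

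I expect the main obstacle to be the boundary statement $\partial_P\widehat{G}\cong S^2$. For this I would use the spiderweb/approximation approach of \cite{BoundariesOfDehnFillings}: realise $\partial\widehat{Y}$ as a weak Gromov--Hausdorff limit of spheres obtained by drilling progressively more axes. Equivalently, I would show that $\partial\widehat{Y}$ is obtained from $\partial X\cong S^2$ by a branched-cover-like unwrapping at the endpoints of axes. The delicate point is controlling the geometry uniformly so that the limit remains a $2$--sphere rather than a more complicated planar continuum.
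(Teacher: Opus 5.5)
Citing \cite[Theorem B]{Drilling} is exactly what the paper does. The theorem is restated without proof, only to fix the construction used in the second half of Section~\ref{sec building}. Your reproof outline has the right architecture: it matches what \thref{properties of Yhat} records about that construction (a drilled space $\widecheck{X}_0$ with enormous systole, its $D_1$--universal cover $\widecheck{Y}$, horoballs on lifted frontiers, and $P=\langle a\rangle\rtimes\langle b\rangle$ with $N=\langle a\rangle$). As a reproof, however, it has genuine gaps. You leave $\partial_P\widehat{G}\cong S^2$ entirely open; naming the approximation strategy is not carrying it out. The first place where $\partial X\cong S^2$ must be used is also only ``expected'': you need each frontier $T_h$ to be coarsely an annulus whose meridian has infinite order in $\pi_1^D(X^\circ)$. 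Only then are lifts of frontiers coarse planes and is the deck stabilizer of a lift exactly the meridian subgroup. That in turn is what gives $N\cong\ZZ$, $P\cap\ker(\widehat{G}\to G)=N$, and the $\ZZ^2$/Klein bottle dichotomy.

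Several deductions also fail as written.
\emph{The quotient.} ``Filling recovers $X$ up to quasi-isometry, so $\widehat{G}/\llangle N\rrangle\cong G$'' is a non sequitur, since quasi-isometric spaces do not give isomorphic groups. The kernel of $\widehat{G}\to G$ is the deck group $\pi_1^D(X^\circ)$. What you need is that it is normally generated by meridians, which is a coarse van Kampen argument cutting coarse null-homotopies in $X$ along the removed tubes.
\emph{Torsion.} You assert the kernel is torsion free, but being generated by conjugates of an infinite-order element does not imply this. Freeness of the kernel comes from the Cohen--Lyndon property via the very translating condition (\thref{very translating implies CH}). Even that only gives ``$G$ torsion free $\Rightarrow$ $\widehat{G}$ torsion free''. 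The converse requires lifting finite subgroups of $G$, whose quasi-centres are forced by hypothesis (2) to lie far from every axis.
\emph{Local hyperbolicity.} Near horoballs, ``$R$ large'' has to mean that the meridian length is exponential in the Cartan--Hadamard scale (compare $\sys_0\geq 2^{25\sigma_0}$ in \thref{properties of Yhat}). This is because horoball shortcuts let a $\sigma$--ball meet horosphere points exponentially far apart. You also still owe uniform hyperbolicity of the local model in which a tube is replaced by a horoball.
\emph{Orientation.} Your meridian computation makes $P$ abelian exactly when $g$ preserves orientation on $\partial X$, equivalently when $g$ acts trivially on $H_1$ of the annulus $\partial X\setminus\{\ell^{\pm}\}$, where $\ell^{\pm}$ are the endpoints of $\ell$. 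That is implied by, but not equivalent to, $G$ preserving an orientation as the statement is phrased. So ``which is the orientation condition on $\partial X$'' hides a real distinction.
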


\drillingsAreDeepEnough*

\begin{notation}\thlabel{properties of Yhat}
    For the rest of this subsection, fix a $(\widehat{G},P,N)$ constructed through the proof of \cite[Theorem B]{Drilling} and assume $G$ is torsion free. Let $K = \llangle N \rrangle_{\widehat{G}}$. We list the properties we need from this construction, and note that the constants are described much more explicitly in \cite[Notation 7.2, \S 8.1]{Drilling}.
    \begin{enumerate}
        \item $\widehat{G}$ acts freely and cocompactly on a metric graph $\widecheck{Y}$.

        \item $\widehat{Y}$ is constructed from $\widecheck{Y}$ by attaching combinatorial horoballs to a $\widehat{G}$ invariant collection of subgraphs $\mathbb{S}$, and $\widehat{Y}$ is $\delta_2 = 1500\delta_1$--hyperbolic \cite[Definition 8.3, Corollary 8.5]{Drilling}.

        \item $\widecheck{Y}$ is $D_1$--simply connected. In particular, the quotient $q:\widecheck{Y}\longrightarrow \leftQ{\widecheck{Y}}{K} =:\widecheck{X}_0$ is a $D_1$--universal covering map \cite[Lemma 8.6]{Drilling}, and $D_1 \geq 5$.
        
        \item Each homotopically nontrivial loop in $\widecheck{X}_0$ has length at least $\sys_0 \geq 2^{25\sigma_0}$, where $\sigma_0 = \max\{10^7\delta_1,10^5D_1\}$.

        \item Any ball of radius $\sigma_0$ in $\widehat{Y}$ is isometric to a ball in a $\delta_1$--hyperbolic space \cite[Proof of Corollary 8.5]{Drilling}.
        
        \item  $\widehat{G}$ acts transitively on the collection $\mathbb{S}$, and the subgroup $P$ is defined by fixing some $\widetilde{\sS} \in \mathbb{S}$ and setting $P:=\textrm{Stab}_{\widehat{G}}(\widetilde{\sS})$. $P$ is a semidirect product $\langle a\rangle \rtimes \langle b\rangle$, where $N:= \langle a \rangle$ and the image of $b$ in $G$ is $g$ \cite[Definition 8.8, Lemma 8.10]{Drilling}. 

        \item $(\widehat{G},P,N)$ is a very translating triple on $(\widehat{Y}, \widecheck{Y})$ \cite[proof of Theorem 11.3]{Drilling}.
        \item $\partial_P\widehat{G} \cong \partial \widehat{Y} \cong S^2$.
    \end{enumerate}
\end{notation}

\begin{notation}
    We fix a finite index subgroup $\widehat{H}$ of $\widehat{G}$ and let $(\widehat{H},\{P_i\}_i,\{N_i\}_i)$ be the induced group triple from $(\widehat{G},P,N)$. Let $\overline{P}_i = P_i/N_i$, $K_{\widehat{H}} = \llangle N_i\rrangle_{\widehat{H}}$ and $H = \widehat{H}/K_{\widehat{H}}$. Let $p:\widecheck{Y}\longrightarrow \leftQ{\widecheck{Y}}{K_{\widehat{H}}} =:\widecheck{\Gamma}$ be the natural quotient. For each $i$, let $\sS_i$ be the horosphere in $\mathbb{S}$ so that $P_i = \textrm{Stab}_{\widehat{H}}(\sS_i)$, and let $\overline{\sS}_i = p(\sS_i)$. Construct $\widehat{\Gamma}$ by attaching a combinatorial horoball to $p(\sS)$ for each $\sS \in \mathbb{S}$.  Let $\overline{\sH}_i$ be the horoball of $\widehat{\Gamma}$ attached to $\overline{\sS}_i$. 
\end{notation}

\begin{lemma}
    The set $\{\overline{\sH}_i\}_i$ is a collection of representatives for distinct $H$--orbits of horoballs in $\widehat{\Gamma}$. If $\overline{\sH}$ is a horoball of $\widehat{\Gamma}$, then $\textrm{Stab}_{H}(\overline{\sH})$ is a conjugate of some $\overline{P}_i$ which acts geometrically on $\overline{\sS}_i$.
\end{lemma}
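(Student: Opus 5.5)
The plan is to repeat the argument of \thref{stabilizer of image of horoball is image of stabilizer}, with $\widehat{H}$, $K_{\widehat{H}}$ and $H$ playing the roles of $G$, $K$ and $\overline{G}$. All that argument used was that a group acts freely and cocompactly on $\widecheck{Y}$, that a normal subgroup is quotiented out, and that the horospheres are disjoint connected subgraphs. The corresponding facts here are supplied by \thref{properties of Yhat}. $\widehat{G}$, and hence its finite index subgroup $\widehat{H}$, acts freely and cocompactly on $\widecheck{Y}$. The elements of $\mathbb{S}$ are disjoint connected subgraphs, because their union is the set where horoballs are attached.

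\emph{Step 1 (covering diagram).} Let $\varphi:\widehat{H}\to H$ be the quotient map. Since $K_{\widehat{H}}\triangleleft \widehat{H}$ acts freely, I would obtain the commutative diagram of covering maps $\widecheck{Y}\to\widecheck{\Gamma}\to \leftQ{\widecheck{Y}}{\widehat{H}}$ exactly as in \thref{diagram star}. The deck groups are $K_{\widehat{H}}$ and $H$, and $p(h\cdot z)=\varphi(h)\cdot p(z)$.

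\emph{Step 2 (orbit representatives).} Every horoball of $\widehat{\Gamma}$ is attached to some $p(\sS)$ with $\sS\in\mathbb{S}$. The $\widehat{H}$--orbits of elements of $\mathbb{S}$ correspond to the double cosets $\widehat{H}\backslash\widehat{G}/P$, since $\widehat{G}$ is transitive on $\mathbb{S}$ with stabilizer $P$. By the choice of the induced peripheral structure, the $\sS_i$ represent these orbits, and each $P_i=\widehat{H}\cap P^{g_i}$ is the $\widehat{H}$--stabilizer of $\sS_i$.
\begin{itemize}
\item If $\sS=h\sS_i$, then $p(\sS)=\varphi(h)\overline{\sS}_i$, so the $\overline{\sH}_i$ meet every orbit.
\item The orbits are distinct: if $\varphi(h)\overline{\sS}_i=\overline{\sS}_j$, then some $k\in K_{\widehat{H}}$ satisfies $kh\sS_i\cap\sS_j\neq\varnothing$. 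Disjointness of the horospheres gives $kh\sS_i=\sS_j$, so $i=j$.
\end{itemize}
One subtlety is that $p(\sS)$ and $p(\sS')$ could coincide for $\sS\neq\sS'$ in distinct $K_{\widehat{H}}$--orbits. I would also need to rule out $p(\sS)$ and $p(\sS')$ merely overlapping. Both follow from disjointness: if $p(\sS)\cap p(\sS')\neq\varnothing$, then $k\sS\cap\sS'\neq\varnothing$ for some $k\in K_{\widehat{H}}$, hence $k\sS=\sS'$.

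\emph{Step 3 (stabilizers).} It suffices to show $\mathrm{Stab}_H(\overline{\sH}_i)=\varphi(P_i)$. The inclusion $\supseteq$ is clear. For the reverse, I would run the path-lifting argument of \thref{stabilizer of image of horoball is image of stabilizer}.
\begin{itemize}
\item Take $g\in \mathrm{Stab}_H(\overline{\sH}_i)$, a point $x\in\overline{\sS}_i$, and a path in $\overline{\sS}_i$ from $x$ to $g x$. Lift the path to $\sS_i$; it runs from $\widetilde x$ to some $y$. This lift stays in $\sS_i$ because $p^{-1}(\overline{\sS}_i)$ is the disjoint union of the translates $k\sS_i$ with $k\in K_{\widehat{H}}$, and each translate is a connected component.
\item Since $F$ identifies $x$ and $gx$, there is $\widetilde g\in\widehat{H}$ with $\widetilde g\widetilde x=y$. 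Then $\widetilde g$ maps $\sS_i$ to a horosphere meeting $\sS_i$, so $\widetilde g\in P_i$.
\item Deck transformations are determined by the image of one point, so $\varphi(\widetilde g)=g$.
\end{itemize}

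\emph{Step 4 (geometric action).} $\varphi(P_i)$ acts freely on $\overline{\sS}_i$ because it acts by deck transformations. It acts cocompactly because $\leftQ{\overline{\sS}_i}{\varphi(P_i)}=\leftQ{\sS_i}{P_i}$. The latter is compact: $P_i$ has finite index in $P^{g_i}$, and $P^{g_i}$ acts cocompactly on its horosphere since $\widehat{G}$ acts cocompactly on $\widecheck{Y}$ and the horospheres are disjoint.

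I expect the main obstacle to be the bookkeeping in Step 2 identifying $P_i$ with $\mathrm{Stab}_{\widehat{H}}(\sS_i)$, and the fact that the $\sS_i$ represent exactly the $\widehat{H}$--orbits in $\mathbb{S}$. This is where the transitivity of $\widehat{G}$ on $\mathbb{S}$ from \thref{properties of Yhat} (6) and the definition of the induced peripheral structure must be matched carefully. Horospheres with finite $\widehat{H}$--stabilizer cannot occur, because $P$ is infinite and $\widehat{H}$ has finite index.
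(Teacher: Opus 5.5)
Your proposal is correct and follows the paper's proof, which consists only of the remark that the argument of \thref{stabilizer of image of horoball is image of stabilizer} goes through verbatim with $\widehat{H}$, $K_{\widehat{H}}$, $H$ in place of $G$, $K$, $\overline{G}$. You carry out exactly that substitution, and the extra details you supply (distinct horospheres having disjoint images under $p$, lifted paths staying in $\sS_i$, cocompactness of $P_i$ via finite index) are points the paper leaves implicit, though your reason that horospheres are disjoint (``their union is where horoballs are attached'') is loose: disjointness is a feature of the construction in \cite{Drilling}, also assumed without comment in the paper's original lemma, not a consequence of attaching horoballs.
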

\begin{proof}
    This is exactly the same as \thref{stabilizer of image of horoball is image of stabilizer} with different letters. 
\end{proof}

\begin{lemma}\thlabel{simply connected special case}
    $\widehat{\Gamma}$ is $D_1$--simply connected.
\end{lemma}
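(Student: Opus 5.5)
I will follow the proof of \thref{widehatGamma is simply connected}, replacing the use of \thref{Hyperbolic spaces are coarsely simply connected} by the direct hypothesis (3) of \thref{properties of Yhat} that $\widecheck{Y}$ is $D_1$--simply connected. The goal is to show that $\pi_1^{D_1}(\widehat{\Gamma})$ is trivial. Since horoballs are $5$--simply connected (\thref{horoballs are simply connected}) and $D_1\geq 5$, attaching a horoball to a horosphere $\overline{\sS}$ kills every loop in $\overline{\sS}$ in $\widehat{\Gamma}^{D_1}$. By Seifert--Van Kampen it is therefore enough to show that $\pi_1^{D_1}(\widecheck{\Gamma})$ is normally generated by classes of loops that are freely homotopic to loops contained in horospheres $p(\sS)$.

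First I record the covering structure. The group $K_{\widehat{H}}$ is a subgroup of $\widehat{G}$, so it acts freely on $\widecheck{Y}$ and $p$ is a covering map with deck group $K_{\widehat{H}}$. I will pass this to the $D_1$--complexes by attaching discs along all loops of length at most $D_1$ in both graphs. Because $p$ is a covering of graphs and the attaching set is $K_{\widehat{H}}$--invariant, $\widecheck{Y}^{D_1}\to\widecheck{\Gamma}^{D_1}$ is again a covering with deck group $K_{\widehat{H}}$. There is one subtlety: a loop of length at most $D_1$ in $\widecheck{\Gamma}$ might lift to a non-closed path. Property (4) of \thref{properties of Yhat} handles this. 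Since $K_{\widehat{H}}\le K$, the space $\widecheck{\Gamma}$ covers $\widecheck{X}_0$, and every loop of length at most $D_1<\sys_0$ in $\widecheck{\Gamma}$ maps to a null-homotopic loop in $\widecheck{X}_0$. Because $q$ is the $D_1$--universal covering, such a loop lifts to a closed loop in $\widecheck{Y}$. So short loops lift to short loops, and the discs correspond exactly.

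Since $\widecheck{Y}^{D_1}$ is simply connected, this gives $\pi_1^{D_1}(\widecheck{\Gamma})\cong K_{\widehat{H}}$. It remains to realize the generators as horosphere loops. The group $K_{\widehat{H}}$ is normally generated in $\widehat{H}$ by the $N_i$. For $n\in N_i$ and $x\in\sS_i$, connect $x$ to $n\cdot x$ by a path inside the connected horosphere $\sS_i$. Its image under $p$ is a loop in $\overline{\sS}_i$ representing $n$ under the isomorphism above. Conjugating by $h\in\widehat{H}$ corresponds to changing basepoint, i.e.\ to a loop in $p(h\sS_i)$ up to free homotopy. So every element of $\pi_1^{D_1}(\widecheck{\Gamma})$ is a product of classes freely homotopic to loops in horospheres, and all of these die in $\widehat{\Gamma}^{D_1}$.

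The main obstacle is identifying $\pi_1^{D_1}(\widecheck{\Gamma})$ with the deck group $K_{\widehat{H}}$, which depends on short loops in $\widecheck{\Gamma}$ lifting to closed loops in $\widecheck{Y}$. I expect to settle this through the systole bound (4) together with the factorization $\widecheck{Y}\to\widecheck{\Gamma}\to\widecheck{X}_0$. If a cleaner route is needed, I will instead argue that a $D_1$--loop in $\widecheck{\Gamma}$ represents a trivial element of $\pi_1(\widecheck{X}_0)$ modulo $D_1$--loops, so its lift is closed.
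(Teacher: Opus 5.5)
Your proposal is correct and follows essentially the same route as the paper: identify $\pi_1^{D_1}(\widecheck{\Gamma})$ with the deck group $K_{\widehat{H}}$ using the factorization $\widecheck{Y}\to\widecheck{\Gamma}\to\widecheck{X}_0$, realize the $\widehat{H}$--conjugates of the $N_i$ as loops freely homotopic into horospheres, and kill these with the $5$--simply connected horoballs since $D_1\geq 5$. You go beyond the paper only in justifying that short loops in $\widecheck{\Gamma}$ lift to closed loops, and there the systole bound (4) is not needed: the image under $F$ of a loop of length at most $D_1$ already bounds a $2$--cell in $\widecheck{X}_0^{D_1}$, so its lift through the $D_1$--universal covering $q$, which coincides with the lift through $p$, is closed (your word ``null-homotopic'' must be read in $\widecheck{X}_0^{D_1}$, since such a loop is not genuinely null-homotopic in the graph $\widecheck{X}_0$).
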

\begin{proof}
    Because $K_{\widehat{H}} < K$ and $\widecheck{Y}$ is $D_1$--simply connected, we have the following commutative diagram, where $p,q$ are $D_1$--universal covering maps with deck groups $K_{\widehat{H}}$ and $K$ respectively. 
    \begin{center}
    \begin{tikzcd}
    \widecheck{Y} \arrow[rr, "q"] \arrow[rd, "p"'] &                                    & \widecheck{X}_0 \\
                                               & \widecheck{\Gamma} \arrow[ru, "F"'] &            
    \end{tikzcd}
    \end{center}
    Standard covering space theory tells us $\pi_1^{D_1}(\widecheck{\Gamma}) = K_{\widehat{H}}$. Since $K_{\widehat{H}} = \llangle N_i \rrangle_{\widehat{H}}$ and each $N_i$ stabilizes the connected subgraph $\sS_i$, we see that $\pi_1^{D_1}(\widecheck{\Gamma})$ is generated by loops which are freely homotopic to loops contained in the horospheres of $\widecheck{\Gamma}$. Suppose $\overline{\sS}$ is such a horosphere with horoball $\overline{\sH}$. Then any loop in $\overline{\sS}$ becomes nullhomotopic in $\overline{\sH}^{D_1}$ because horoballs are $5$--simply connected by \thref{horoballs are simply connected} and $D_1 \geq 5$. Thus generators of $\pi_1^{D_1}(\widecheck{\Gamma})$ become nullhomotopic in $\widehat{\Gamma}^{D_1}$, and $\pi_1^{D_1}(\widehat{\Gamma})$ is trivial. 
\end{proof}

The next lemma is a very similar to \cite[Lemma 7.9]{Drilling}, and we write out the full argument here for clarity. The strategy is to use fact that nontrivial loops in $\widecheck{X}_0$ are very long to prove that the projection $\widecheck{Y}\longrightarrow \widecheck{X}_0$, hence $p$, has a large injectivity radius. This injectivity radius is not large enough to satisfy the third requirement of \thref{main metric lemma try2}, which is why we cannot apply \thref{main metric lemma try2} directly. To see this more explicitly, note that $\sigma_0 := \max(10^7\delta_1,10^5D_1)$ and $\delta_2 :=1500\delta_1$, but in order to satisfy the third requirement of \thref{main metric lemma try2}, we would need an injectivity radius of $\max(10^7\delta_2,10^5 D_1) \geq 10^7(1500\delta_1)$. Unless $D_1$ is very big compared to $\delta_1$, this is least $1500\sigma_1$. The argument below only provides an injectivity radius of $9\sigma_0$, and we cannot really hope to improve on this without adjusting the lower bound on $\sys_0$, or coming up with a different argument.  

\begin{lemma}
    For any $z \in \widehat{Y}$ which is either not in a horoball or lies at depth at most $3\sigma_0$ in a horoball, the projection $p:\widehat{Y}\longrightarrow \widehat{\Gamma}$ is an isometric embedding on $B(z,3\sigma_0)$ and
    \[p(B(z,3\sigma_0)) = B(p(z),3\sigma_0).\]
\end{lemma}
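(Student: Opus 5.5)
We reduce the statement to \thref{upgrade to isometry}: it suffices to show that $p$ is injective on a larger ball, say $B(z,9\sigma_0)$. The map $p:\widehat{Y}\to\widehat{\Gamma}$ is simplicial and is a covering map away from nothing (on horospheres and horoballs it is the quotient by the free action of $K_{\widehat{H}}$), hence a local homeomorphism everywhere. Applying \thref{upgrade to isometry} with $R = 9\sigma_0$ then gives that $p$ restricts to an isometry $B(z,3\sigma_0)\to B(p(z),3\sigma_0)$. This is exactly the claimed isometric embedding together with the equality $p(B(z,3\sigma_0)) = B(p(z),3\sigma_0)$.

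\textbf{Reduction to a basepoint in $\widecheck{Y}$.} If $z$ lies at depth at most $3\sigma_0$ in a horoball, replace $z$ by the point $z'\in\widecheck{Y}$ directly above it. Then $B(z,9\sigma_0)\subset B(z',12\sigma_0)$, so it suffices to prove injectivity of $p$ on balls of radius $12\sigma_0$ centered on $\widecheck{Y}$.

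\textbf{Injectivity via the systole.} Since $K_{\widehat{H}}<K$, the map $q=F\circ p$ factors through $p$, so it is enough to show that $q$ (extended over horoballs to $\widehat{Y}\to\widehat{X}_0$) is injective on $B(z',12\sigma_0)$. Suppose $x\neq y$ in this ball with $y=kx$ for some $k\in K\setminus\{1\}$. Join $x$ and $y$ to $z'$ by geodesics in $\widehat{Y}$, giving a path of length at most $24\sigma_0$ from $x$ to $kx$. This path may enter horoballs. Replace each horoball excursion by a path in the horosphere. By \thref{horoballs are hyperbolic}, a horoball segment of length $L$ between horosphere points is replaced by a horosphere path of length at most $2^{L/2}$. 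The total length is therefore at most roughly $24\sigma_0\cdot 2^{12\sigma_0}<2^{25\sigma_0}\le \sys_0$, since $\sigma_0$ is large.

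Projecting this path to $\widecheck{X}_0$ gives a closed loop, because $q(x)=q(kx)$. The loop lifts to the non-closed path from $x$ to $kx$ in $\widecheck{Y}$. Since $q$ is the $D_1$--universal cover, the loop is not null-homotopic in $\widecheck{X}_0$ (it need not be freely nullhomotopic even coarsely). Its length is less than $\sys_0$, which contradicts item (4) of \thref{properties of Yhat}. Some care is needed when $x$ itself lies deep in a horoball. In that case, first move $x$ vertically to the horosphere; if $x$ and $kx$ are at the same depth, they project vertically to points identified by $k$. This step costs only additional depth, and hence a factor of order $2^{O(\sigma_0)}$.

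\textbf{Main obstacle.} The hard part is the bookkeeping in the horoball-replacement step: verifying that the exponential blow-up stays below $2^{25\sigma_0}$. One must also check that the resulting horosphere path genuinely lives in $\widecheck{Y}$, where $q$ is a covering with the stated systole. A second point to handle carefully is the claim that a nontrivial deck translation forces a homotopically nontrivial loop. Here we use that $\widecheck{Y}$ is the $D_1$--universal cover, so a closed loop in $\widecheck{X}_0$ lifts to a closed loop whenever it is trivial in $\pi_1^{D_1}$. The systole bound should be interpreted for loops nontrivial in $\pi_1^{D_1}(\widecheck{X}_0)$, as in \cite[Lemma 7.9]{Drilling}.
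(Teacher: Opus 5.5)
Your argument is the paper's argument. You show $p$ is injective on $B(z,9\sigma_0)$ and invoke \thref{upgrade to isometry}, move identified points vertically to depth $0$, and pass from $p$ to $q=F\circ p$ using $K_{\widehat{H}}<K$. You then contradict $\sys_0\ge 2^{25\sigma_0}$ by pushing horoball excursions into horospheres with \thref{horoballs are hyperbolic}. Recentering at $z'$, rather than using $d(z_1,z_2)\le 18\sigma_0$ as the paper does, is immaterial.

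\textbf{A false claim.} $p\colon\widehat{Y}\to\widehat{\Gamma}$ is not a local homeomorphism everywhere. The space $\widehat{\Gamma}$ is not $\widehat{Y}/K_{\widehat{H}}$; it is $\widecheck{\Gamma}$ with fresh horoballs attached to the image horospheres. Take $k\in N_i\setminus\{1\}$ and $v\in\sS_i$. Once $2^n\ge d_{\sS_i}(v,kv)$, there is a horizontal edge from $(v,n)$ to $(kv,n)$ in $\widehat{Y}$ whose endpoints both map to $(p(v),n)$. So deep in horoballs $p$ collapses edges and is not locally injective.

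\textbf{What is actually needed.} \thref{upgrade to isometry} only requires the local homeomorphism property on $B(z,9\sigma_0-1)$, whose points have depth at most $12\sigma_0$. Suppose $k\in K_{\widehat{H}}\setminus\{1\}$ and $v,kv$ lie in a horosphere $\sS$. Then the same systole argument gives
\[
d_{\sS}(v,kv)\ge d_{\widecheck{Y}}(v,kv)\ge \sys_0>2^{12\sigma_0+1}.
\]
Hence no edge at those depths is collapsed and no two neighbours of a vertex are identified. Horizontal edges of $\widehat{\Gamma}$ lift by lifting horosphere paths. So the star of each such vertex maps bijectively onto the star of its image. The paper does not verify this hypothesis explicitly either, but you should replace your global justification with this local one.

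\textbf{Bookkeeping in the deep case.} You need explicit constants here, not ``a factor of order $2^{O(\sigma_0)}$''. Your margin below $2^{25\sigma_0}$ is only a factor of $2^{\sigma_0}$. The path $x_0\to x\to z'\to kx\to kx_0$ has length at most $48\sigma_0$. Every horoball excursion has length at least $2$, so $\sum_j 2^{L_j/2}\le 2^{\sum_j L_j/2}$. The resulting horosphere path therefore has length at most $2^{24\sigma_0}+48\sigma_0<2^{25\sigma_0}$, which closes the argument. The paper's version gets exponent $21\sigma_0+1$ by not recentering.
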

\begin{proof}
    We claim $p$ is injective on $B(z,9\sigma_0)$ so that \thref{upgrade to isometry} implies $p$ is an isometric embedding on $B(z,3\sigma_0)$. For a contradiction, suppose $z_1,z_2 \in B(z,9\sigma_0)$ are distinct and $p(z_1) = p(z_2)$. By the triangle inequality, $d_{\widehat{Y}}(z_1,z_2) \leq 18\sigma_0$. Since $z$ has depth at most $3\sigma_0$, the $z_i$ have depth at most $12\sigma_0$, with this bound only being achieved by traveling vertically straight down into the horoball. Because the action on $\widehat{Y}$ is depth preserving, the $z_i$ must have the same depth and the entire vertical paths above and below the $z_i$ are identified in $\widehat{\Gamma}$. If the depth of the $z_i$ is positive, we replace them with points directly above them on the horosphere so that they have depth $0$. Thus at the cost of assuming 
    \[d_{\widehat{Y}}(z_1,z_2) \leq (12+18+12)\sigma_0 = 42\sigma_0,\]
    we may suppose that $z_1,z_2$ have depth $0$. Because they are at depth $0$, we can view the $z_i$ as points in $\widecheck{Y}$. Recall the following commutative diagram, where $p,q$ are $D_1$--universal covering maps with deck groups $K_{\widehat{H}}$ and $K$ respectively. 

    \begin{center}
    \begin{tikzcd}
    \widecheck{Y} \arrow[rr, "q"] \arrow[rd, "p"'] &                                    & \widecheck{X}_0 \\
                                               & \widecheck{\Gamma} \arrow[ru, "F"'] &            
    \end{tikzcd}
    \end{center}
    Because $p(z_1) = p(z_2)$ and the diagram commutes, we must have $q(z_1) = q(z_2)$. Since $q$ is a $D_1$--covering map, any geodesic between $z_1$ and $z_2$ maps to a loop representing a nontrivial element of $\pi_1^{D_1}(\widecheck{X}_0)$. Such loops have length at least $\sys_0$, hence $d_{\widecheck{Y}}(z_1,z_2) \geq \sys_0 \geq 2^{25\sigma_0}$. To compare this distance to $d_{\widehat{Y}}(z_1,z_2)$, notice that because $\widehat{Y}$ is constructed by attaching horoballs to $\widecheck{Y}$, any segment which goes through a horoball may be pushed out of the horoball at the cost of increasing the length by an exponential function. This cost is maximized when the geodesic is fully contained in horoball. More precisely, using the bound in \thref{horoballs are hyperbolic} and the fact that paths which are pushed out of a horoball have length at least $4$, we have 
    \[d_{\widecheck{Y}}(z_1,z_2) \leq 2\cdot 2^{\frac{1}{2}d_{\widehat{Y}}(z_1,z_2)} \leq 2^{21\sigma_0+1} < 2^{25\sigma_0} \leq \sys_0.\]
    This contradicts the lower bound $d_{\widecheck{Y}}(z_1,z_2) \geq \sys_0$.
\end{proof}

\begin{lemma}\thlabel{modelling for special case}
    Any ball of radius $\sigma_0$ in $\widehat{\Gamma}$ is isometric to a ball in a $\delta_1$ hyperbolic space. 
\end{lemma}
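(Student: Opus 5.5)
We follow the two-case argument of \thref{modelling for criterion}, using the preceding lemma in place of the injectivity hypothesis of \thref{main metric lemma try2}. Fix $w \in \widehat{\Gamma}$ and consider $B(w,\sigma_0)$. The cases are whether $w$ lies deep in a horoball of $\widehat{\Gamma}$ or close to $\widecheck{\Gamma}$. In either case we want an isometry from $B(w,\sigma_0)$ onto a ball in a $\delta_1$--hyperbolic space.

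\emph{Deep case.} Suppose $w$ lies in a horoball $\overline{\sH}$ at depth greater than $\sigma_0$. Any path leaving $\overline{\sH}$ must first rise to depth $0$, which costs more than $\sigma_0$. So every point within distance $\sigma_0$ of $w$ lies in $\overline{\sH}$, and every geodesic between two such points stays in $\overline{\sH}$, since its length is at most $2\sigma_0$ while a detour out of the horoball would cost more than $2\sigma_0$. Hence $B(w,\sigma_0)$ equals the ball of radius $\sigma_0$ about $w$ in the combinatorial horoball $\overline{\sH}$ with its intrinsic metric. By \thref{horoballs are hyperbolic}, $\overline{\sH}$ is $\theta_0$--hyperbolic. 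We need $\theta_0 \le \delta_1$. This is not stated explicitly in \thref{properties of Yhat}, but it follows from item (5): a ball of radius $\sigma_0$ in $\widehat{Y}$, centered very deep in a horoball of $\widehat{Y}$, is an intrinsic horoball ball, and it is isometric to a ball in a $\delta_1$--hyperbolic space. Every combinatorial horoball is isometric to every other deep down at this scale, because the depth-$d$ metric is essentially trivial for large $d$. So we may instead lift: pick $\widetilde{w}$ deep in the corresponding horoball of $\widehat{Y}$. The $\sigma_0$--balls about $w$ and about $\widetilde{w}$ are both isometric to the same standard deep horoball ball, since at depth greater than $\log_2$ of the diameter of the horosphere-neighbourhood in question the horizontal structure is complete. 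Item (5) then gives the conclusion. The cleaner route is to check directly that the value of $\delta_1$ in \cite{Drilling} dominates $\theta_0$, and I would cite that.

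\emph{Shallow case.} Suppose $w$ is at depth at most $\sigma_0$, or not in a horoball. Choose a lift $z \in \widehat{Y}$ with $p(z)=w$ at the same depth; this is possible because $p$ is surjective and depth-preserving. Then $z$ has depth at most $3\sigma_0$, so the preceding lemma applies: $p$ is an isometric embedding on $B(z,3\sigma_0)$ and $p(B(z,3\sigma_0)) = B(w,3\sigma_0)$. Restricting, $p$ maps $B(z,\sigma_0)$ isometrically onto $B(w,\sigma_0)$. One detail needs checking: the image of $B(z,\sigma_0)$ is exactly $B(w,\sigma_0)$. This follows because $p$ is an isometry between the two $3\sigma_0$--balls and distances to the center are preserved. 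By item (5) of \thref{properties of Yhat}, $B(z,\sigma_0)$ is isometric to a ball in a $\delta_1$--hyperbolic space, and composing the isometries finishes this case.

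\emph{Main obstacle.} I expect the shallow case to be routine given the preceding lemma. The delicate point is the deep case, namely comparing $\theta_0$ with $\delta_1$. If the constants in \cite{Drilling} do not directly give $\theta_0 \le \delta_1$, I would use the lifting argument via item (5) described above.
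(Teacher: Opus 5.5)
Your plan has the same structure as the paper's proof. You split according to the depth of the center, handle centers near $\widecheck{\Gamma}$ with the preceding lemma and item (5) of \thref{properties of Yhat}, and handle deep centers with the $\theta_0$--hyperbolicity of combinatorial horoballs together with $\theta_0\le\delta_1$. Your shallow case is correct, including the check that $p(B(z,\sigma_0))=B(w,\sigma_0)$, which the paper leaves implicit. The deep case has two problems, both repairable.

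First, with the case split at depth $\sigma_0$, the argument that geodesics stay in $\overline{\sH}$ fails. If $w$ has depth only slightly more than $\sigma_0$, points of $B(w,\sigma_0)$ lie at depth close to $0$. From such points a detour out of the horoball costs only a few steps, not more than $2\sigma_0$. A geodesic between two points of $B(w,\sigma_0)$ can pass as far as $2\sigma_0$ from $w$, so nothing you say keeps it off the horosphere. Hence you have not shown $d_{\widehat{\Gamma}}=d_{\overline{\sH}}$ on the ball.

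The repair costs nothing, because the preceding lemma covers lifts of depth up to $3\sigma_0$. Split at $3\sigma_0$ as the paper does; it then applies \thref{upgrade to isometry} to the inclusion of $B(w,3\sigma_0)$ into $\overline{\sH}$. Alternatively, split at $2\sigma_0$. If $w$ has depth greater than $2\sigma_0$, every geodesic joining points of $B(w,\sigma_0)$ lies in $B(w,2\sigma_0)$, which sits at positive depth, so it is a path in $\overline{\sH}$. The reverse inequality is automatic because $\overline{\sH}$ is a subgraph.

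Second, your fallback derivation of $\theta_0\le\delta_1$ from item (5) is wrong and should be dropped. The horoball $\overline{\sH}_i$ is built on $\overline{\sS}_i=p(\sS_i)=\sS_i/N_i$, using $P_i\cap K_{\widehat{H}}=N_i$ from \thref{induced filling for normal subgroups}. This is a quasi-line, acted on cocompactly by $\overline{P}_i\cong\ZZ$, whereas $\sS_i$ is a quasi-plane. Deep balls in horoballs over these two graphs are not isometric: a vertex at depth $d$ has on the order of $2^d$ horizontal neighbours over $\overline{\sS}_i$ but on the order of $4^d$ over $\sS_i$. Moreover, the horizontal structure never becomes ``complete'', since $\overline{\sS}_i$ has infinite diameter. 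So item (5), which concerns $\widehat{Y}$, gives no information about deep balls of $\widehat{\Gamma}$. The paper simply takes $\theta_0\le\delta_1$ as a property of the constants fixed in \cite{Drilling}, and your option of citing that is the correct one.
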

\begin{proof}
    By \thref{upgrade to isometry}, it suffices to show any ball of radius $3\sigma_0$ in $\widehat{\Gamma}$ maps homeomorphically to a ball in some $\delta_2$--hyperbolic space. Fix a point $z \in \widehat{Y}$, and let $B^+ = B(z,3\sigma_0)$. As in the proof of \thref{modelling for criterion}, there are two cases, depending on how deep $z$ is in a horoball. If $z$ has depth at least $3\sigma_0$ in a horoball $\overline{\sH} \subset \widehat{\Gamma}$, then the inclusion $B^+ \longrightarrow \overline{\sH}$ is a homeomorphism and all combinatorial horoballs are $\theta_0$--hyperbolic. Since $\theta_0 \leq \delta_1$, this is more than enough. If $z$ has depth less than $3\sigma_0$, then the previous lemma implies $B^+$ is isometric to a ball in $\widehat{Y}$. By \thref{properties of Yhat}, each ball of radius $\sigma_0$ in $\widehat{Y}$ is isometric to a ball in $\delta_1$--hyperbolic space.
\end{proof}

\begin{lemma}
    The space $\widehat{\Gamma}$ is $\delta'= 300\delta_1$--hyperbolic.
\end{lemma}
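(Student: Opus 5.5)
The plan is to apply the Coarse Cartan Hadamard \thref{coarseCH} to $\widehat{\Gamma}$ with $\nu = \delta_1$ and $\sigma = \sigma_0 = \max\{10^7\delta_1, 10^5 D_1\}$. This mirrors the proof of \thref{widehatGamma is hyperbolic}, with the two inputs now supplied by \thref{simply connected special case} and \thref{modelling for special case} instead of their counterparts in Section \ref{sec 3}. The conclusion of \thref{coarseCH} is then exactly that $\widehat{\Gamma}$ is $300\delta_1$--hyperbolic.

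We need to check three hypotheses.

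\textbf{Local hyperbolicity.} By \thref{modelling for special case}, every ball of radius $\sigma_0$ in $\widehat{\Gamma}$ is isometric to a ball in a $\delta_1$--hyperbolic space. That ball is therefore itself $\delta_1$--hyperbolic, at least for triangles whose geodesics stay inside the ball. This is how the balls were used in \thref{widehatGamma is hyperbolic}, and we will interpret ``$\nu$--hyperbolic ball'' in \thref{coarseCH} the same way.

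\textbf{Coarse simple connectivity.} \thref{simply connected special case} gives that $\widehat{\Gamma}$ is $D_1$--simply connected. Since $\sigma_0 \ge 10^5 D_1$, we have $D_1 \le 10^{-5}\sigma_0$. A $D_1$--simply connected graph is also $10^{-5}\sigma_0$--simply connected, because attaching discs to more loops can only kill more of $\pi_1$. So the coarse simple connectivity requirement holds.

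\textbf{The constant.} \thref{coarseCH} asks for a strict inequality $\sigma > 10^7\nu$, while we only know $\sigma_0 \ge 10^7\delta_1$. The simplest fix is to apply the theorem with a slightly smaller $\sigma$: any $\sigma$ with $10^7\delta_1 < \sigma \le \sigma_0$ would do, but the simple-connectivity condition needs $10^{-5}\sigma \ge D_1$, which may fail if $\sigma_0 = 10^5 D_1 = 10^7\delta_1$ exactly. A cleaner route is to note that a $\delta_1$--hyperbolic ball is also $\nu$--hyperbolic for any $\nu$ slightly larger than $\delta_1$. Alternatively, we can follow the paper's convention in \thref{widehatGamma is hyperbolic} and treat the inequality as non-strict. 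In either case the resulting constant is $300\delta_1$, up to an arbitrarily small increase, which matches the statement.

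The only real obstacle is this boundary-case bookkeeping of constants. Everything substantive has already been established in the two preceding lemmas.
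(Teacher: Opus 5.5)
Your proof is the paper's proof: Coarse Cartan--Hadamard with $\nu=\delta_1$ and $\sigma=\sigma_0$, using the local modelling lemma and the $D_1$--simple connectivity lemma. Your check that $D_1\le 10^{-5}\sigma_0$ lets $D_1$--simple connectivity pass to $10^{-5}\sigma_0$--simple connectivity, which is exactly the step the paper leaves implicit.

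One correction to your bookkeeping of the strict inequality. Enlarging $\nu$ beyond $\delta_1$ cannot help. The hypothesis is $\sigma>10^7\nu$, so a larger $\nu$ makes it harder, not easier, to satisfy with $\sigma=\sigma_0$.

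Shrinking $\sigma$ is also not the right lens. If $\sigma_0=10^5D_1>10^7\delta_1$, you can take $\sigma=\sigma_0$ directly and there is no issue. If instead $\sigma_0=10^7\delta_1$, the interval $(10^7\delta_1,\sigma_0]$ is empty, and simple connectivity is not what fails.

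So the only problematic case is $\sigma_0=10^7\delta_1$. There the only way through is your third option: read the hypothesis as $\sigma\ge 10^7\nu$. That is implicitly what the paper does, both here and in the Section~3 analogue, where it only verifies $\sigma_0\ge\max\{10^7\theta_1,10^5(16\theta_1)\}$.
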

\begin{proof}
    This follows from the Coarse Cartan Hadamard \thref{coarseCH} with balls of radius $\sigma_0$ and Lemmas \ref{modelling for special case}, \ref{simply connected special case}.
\end{proof}

\begin{lemma}
    $(H,\{\overline{P}_i\}_i)$ is relatively hyperbolic.
\end{lemma}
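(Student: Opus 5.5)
The plan is to show that $(\widehat{\Gamma},\widecheck{\Gamma})$ is a pseudo--cusped space for $(H,\{\overline{P}_i\}_i)$ in the sense of \thref{defn pseudo cusped space}. Once that is done, \thref{pseudo cusped implies rel hyp} gives the conclusion directly. This mirrors the proof of \thref{main metric lemma try2}. The only difference is that the hyperbolicity input now comes from the special-case lemmas just proved, not from the injectivity hypothesis.

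\textbf{Step 1: the action of $H$ on $\widecheck{\Gamma}$ is free and cocompact.}
\begin{itemize}
\item By \thref{properties of Yhat}, $\widehat{G}$ acts freely and cocompactly on $\widecheck{Y}$. So the finite index subgroup $\widehat{H}$ also acts freely and cocompactly on $\widecheck{Y}$.
\item $K_{\widehat{H}}$ is normal in $\widehat{H}$, so $H=\widehat{H}/K_{\widehat{H}}$ acts on $\widecheck{\Gamma}=\leftQ{\widecheck{Y}}{K_{\widehat{H}}}$. The quotient map $p$ intertwines the two actions.
\item Freeness is the covering space argument of \thref{diagram star}, run with $\widehat{H}$ and $K_{\widehat{H}}$ in place of $G$ and $K$. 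The map $\widecheck{Y}\to \leftQ{\widecheck{Y}}{\widehat{H}}$ is a regular covering with deck group $\widehat{H}$, and it factors through $p$. Hence $H$ is the deck group of the induced covering $\widecheck{\Gamma}\to\leftQ{\widecheck{Y}}{\widehat{H}}$, and deck groups act freely.
\item Cocompactness follows from $\leftQ{\widecheck{\Gamma}}{H}=\leftQ{\widecheck{Y}}{\widehat{H}}$, which is compact.
\end{itemize}

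\textbf{Step 2: the horoball structure.}
\begin{itemize}
\item By construction, $\widehat{\Gamma}$ is obtained from $\widecheck{\Gamma}$ by attaching combinatorial horoballs to the $H$--invariant collection $\{p(\sS)\mid \sS\in\mathbb{S}\}$.
\item These are connected subgraphs, since each $\sS$ is connected and $p$ is continuous.
\item The preceding lemma (the analogue of \thref{stabilizer of image of horoball is image of stabilizer}) says two things. The $\overline{\sH}_i$ represent the distinct $H$--orbits of horoballs. Each stabilizer is conjugate to some $\overline{P}_i$.
\item So the $\overline{P}_i$ form a set of conjugacy class representatives for the $H$--stabilizers of horospheres. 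By \thref{induced filling for normal subgroups}(3), these $\overline{P}_i$ are exactly the images of the $P_i$ in $H$, and they embed there.
\end{itemize}

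\textbf{Step 3: hyperbolicity.} The previous lemma shows $\widehat{\Gamma}$ is $300\delta_1$--hyperbolic. Taking $\theta_1 = 300\delta_1$ in the definition completes the verification that $(\widehat{\Gamma},\widecheck{\Gamma})$ is a pseudo--cusped space.

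The main point needing care is Step 1, specifically freeness. I expect it to go through exactly as in \thref{diagram star}. The only extra check is that $p$ here is the quotient by $K_{\widehat{H}}$, a subgroup of $\widehat{H}$ that is normal in $\widehat{H}$, rather than by a subgroup normal in $\widehat{G}$. Normality in $\widehat{H}$ is all the regular covering argument uses.
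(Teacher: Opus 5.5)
Your proposal is correct and is essentially the paper's proof. The paper likewise checks that $(\widehat{\Gamma},\widecheck{\Gamma})$ is a pseudo--cusped space for $(H,\{\overline{P}_i\}_i)$, exactly as in the concluding step of the proof of \thref{main metric lemma try2}, and then applies \thref{pseudo cusped implies rel hyp}; your explicit appeal to \thref{induced filling for normal subgroups}(3), identifying the image of $P_i$ in $H$ with $P_i/N_i$, is a detail the paper leaves implicit.
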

\begin{proof}
    Just as in the concluding step of \thref{main metric lemma try2}, $(\widehat{\Gamma},\widecheck{\Gamma})$ is a pseudo--cusped space for $(H,\{\overline{P}_i\})$, hence $(H,\{\overline{P}_i\})$ is relatively hyperbolic by \thref{pseudo cusped implies rel hyp}.  
\end{proof}

\begin{proof}[Proof of Theorem \ref{drillings are deep enough}]
    By the previous lemma, $(H,\{\overline{P}_i\}_i)$ is relatively hyperbolic, and by \thref{induced filling for normal subgroups} (3), each $\overline{P}_i$ is isomorphic to $\ZZ$. Thus $H$ is hyperbolic because it is hyperbolic relative to hyperbolic subgroups. 

    If in addition $G$ is torsion free, then $\widehat{G}$ and hence $\widehat{H}$ is as well (as noted in \cite[Corollary D]{Drilling}, this assumption implies $(\widehat{G},P)$ is a $PD(3)$ pair).
    Since $\partial_P\widehat{G} \cong S^2$, \thref{cohom boundary connector} implies $(\widehat{H},\{P_i\}_i)$ is a $PD(3)$ pair. By \thref{properties of Yhat} (7), $(\widehat{G},P,N)$ is a very translating triple, so the induced group triple for $(\widehat{H}$ is Cohen--Lyndon, see \thref{finite index subgroup of geom CH}. As in previous proofs, \thref{filled group is PDn} applies to the filling of $(\widehat{H},\{P_i\}_i)$ by $\{N_i\}_i$ and implies $H$ is a $PD(3)$ group and $\partial H \cong S^2$ by \thref{bestvina boundary calculator}. 
\end{proof}